\documentclass{amsart}
\usepackage[all]{xy}
\usepackage{verbatim, rotating, stmaryrd, bbm, pict2e, mathrsfs, amssymb, mathtools, multirow, aliascnt}
\usepackage[colorlinks]{hyperref}

\usepackage{tikz}
\usetikzlibrary{matrix,arrows}
\SelectTips{eu}{}


\makeatletter
\def\eqref{\@ifstar\@eqref\@@eqref}
\def\@eqref#1{\textup{\tagform@{\ref*{#1}}}}
\def\@@eqref#1{\textup{\tagform@{\ref{#1}}}}

\newcommand\@dotsep{4.5}
\def\@tocline#1#2#3#4#5#6#7{\relax
  \ifnum #1>\c@tocdepth
  \else
    \par \addpenalty\@secpenalty\addvspace{#2}%
    \begingroup \hyphenpenalty\@M
    \@ifempty{#4}{%
      \@tempdima\csname r@tocindent\number#1\endcsname\relax
    }{%
      \@tempdima#4\relax
    }%
    \parindent\z@ \leftskip#3\relax \advance\leftskip\@tempdima\relax
    \rightskip\@pnumwidth plus1em \parfillskip-\@pnumwidth
    #5\leavevmode\hskip-\@tempdima #6\relax
    \leaders\hbox{$\m@th
      \mkern \@dotsep mu\hbox{.}\mkern \@dotsep mu$}\hfill
    \hbox to\@pnumwidth{\@tocpagenum{#7}}\par
    \nobreak
    \endgroup
  \fi}

\def\l@subsection{\@tocline{2}{0pt}{20pt}{5pc}{}}
\def\l@subsubsection{\@tocline{2}{0pt}{30pt}{5pc}{}}
\makeatother



\theoremstyle{plain}
\newtheorem{thm}{Theorem}[section]
\newtheorem*{thm*}{Theorem}

\newaliascnt{cor}{thm}
\newaliascnt{prop}{thm}
\newaliascnt{lemma}{thm}
\newaliascnt{sublemma}{thm}
\newtheorem{cor}[cor]{Corollary}
\newtheorem{prop}[prop]{Proposition}
\newtheorem{lemma}[lemma]{Lemma}

\aliascntresetthe{cor}
\aliascntresetthe{prop}
\aliascntresetthe{lemma}
\aliascntresetthe{sublemma}

\theoremstyle{definition}
\newaliascnt{defn}{thm}
\newaliascnt{remark}{thm}
\newaliascnt{example}{thm}
\newaliascnt{notation}{thm}
\newaliascnt{terminology}{thm}
\newaliascnt{observation}{thm}
\newaliascnt{question}{thm}
\newtheorem{defn}[defn]{Definition}
\newtheorem{remark}[remark]{Remark}
\newtheorem*{remark*}{Remark}
\newtheorem{example}[example]{Example}
\newtheorem{notation}[notation]{Notation}

\newtheorem{question*}{Question}
\aliascntresetthe{defn}
\aliascntresetthe{remark}
\aliascntresetthe{example}
\aliascntresetthe{notation}
\aliascntresetthe{terminology}
\aliascntresetthe{observation}
\aliascntresetthe{question}

\numberwithin{equation}{thm}




\newcommand{\bA}{\mathbb A}

\newcommand{\bF}{\mathbb F}
\newcommand{\bG}{\mathbb G}

\newcommand{\bM}{\mathbb M}

\newcommand{\bZ}{\mathbb Z}

\newcommand{\cE}{\mathcal E}

\newcommand{\cN}{\mathcal N}

\newcommand{\cU}{\mathcal U}

\newcommand{\fg}{\mathfrak g}
\newcommand{\fh}{\mathfrak h}

\newcommand{\fu}{\mathfrak u}

\def\Spec{\operatorname{Spec}\nolimits}

\def\Lie{\operatorname{Lie}\nolimits}

\def\dim{\operatorname{dim}\nolimits}

\def\ad{\operatorname{ad}\nolimits}
\def\Grass{\operatorname{Grass}\nolimits}

\def\Stab{\operatorname{Stab}\nolimits}

\def\Spec{\operatorname{Spec}\nolimits}
\def\sl2{\operatorname{SL_{2(2)}}\nolimits}
\def\Ga2{\operatorname{\mathbb G_{\rm a(2)}}\nolimits}
\def\SL{\operatorname{SL}\nolimits}
\def\PGL{\operatorname{PGL}\nolimits}
\def\GL{\operatorname{GL}\nolimits}
\def\Sp{\operatorname{Sp}\nolimits}
\def\SO{\operatorname{SO}\nolimits}

\def\Hom{\operatorname{Hom}\nolimits}

\newcommand{\wt}{\widetilde}

\newcommand{\bE}{\mathbb E}

\DeclareMathOperator{\rank}{rank}
\DeclareMathOperator{\spn}{span}
\DeclareMathOperator{\lt}{LT}
\DeclareMathOperator{\lie}{Lie}

\renewcommand{\max}{\mathrm{Max}}
\newcommand{\set}[1]{\left\{#1\right\}}
\newcommand{\Phir}[1]{\Phi^\mathrm{rad}_{#1}}

\date\today
\setcounter{tocdepth}{2}
\begin{document}

\title[Varieties of elementary subalgebras]{Varieties of elementary subalgebras of maximal dimension for modular Lie algebras}

\author[Julia Pevtsova]{Julia Pevtsova$^\dag$}
\address{Department of Mathematics, University of Washington, Seattle, WA}
\email{julia@math.washington.edu}
\thanks{$^\dag$partially supported by the NSF grant 	DMS-0953011}

\author{Jim Stark}
\address{Department of Mathematics, University of Washington, Seattle, WA}
\email{jstarx@math.washington.edu}

\subjclass[2000]{17B50, 16G10}

\begin{abstract}  Motivated by questions in modular representation theory, Carlson, Friedlander, 
and the first author introduced the varieties $\bE(r, \fg)$ of $r$-dimensional abelian $p$-nilpotent 
subalgebras of a $p$-restricted Lie algebra $\fg$ in \cite{CFP14b}. In this paper, we identify the varieties $\bE(r, \fg)$ 
for a reductive restricted Lie algebra $\fg$ and $r$ the maximal dimension of an abelian $p$-nilpotent 
subalgebra of $\fg$. 
\end{abstract}

\maketitle
\tableofcontents

\setcounter{section}{0}
\section{Introduction}

Let $\fg$ be a restricted Lie algebra defined over a field of positive characteristic $p$. A Lie subalgebra $\cE \subset \fg$ is {\it elementary} if it is abelian with trivial $p$-restriction. The study of the projective variety $\bE(r,\fg)$ of elementary subalgebras of $\fg$ of a fixed dimension $r$ was initiated by Carlson et al.~\cite{CFP14a}. The interest in the geometry of $\bE(r, \fg)$ can be traced back to Quillen's foundational work on mod-$p$ group cohomology which revealed the significance of elementary abelian subgroups of a finite group $G$ to both the representations and cohomology of $G$.  The theory of global nilpotent operators, which associates geometric and sheaf-theoretic invariants living on the space $\bE(r, \fg)$ to representations of $\fg$, was developed in two papers of Carlson et al.~\cite{CFP,CFP14b} and serves as one of the motivations for our interest in $\bE(r, \fg)$.  This variety is also directly related to the much studied variety of $r$-tuples of nilpotent commuting elements in $\fg$ and, consequently, to the cohomology of Frobenius kernels of algebraic groups (see Suslin et al.~\cite{bfsSupportVarieties}). Further geometric properties of $\bE(r, \fg)$ were recently investigated in \cite{warner}. 

In this paper we give a description of the variety $\bE(\fg) = \bE(r_{\rm max}, \fg)$ for $\fg = \Lie G$ the Lie algebra of a reductive algebraic group and $r_{\rm max}$ the maximal dimension of an elementary subalgebra of $\fg$. The maximal dimension of an abelian nilpotent subalgebra of a complex simple Lie algebra $\fg$ is known thanks to the work of Malcev~\cite{malcev} while the general linear case was considered by Schur at the turn of the previous century \cite{schur}.  Malcev has also classified such subalgebras up to automorphisms of the Lie algebra. It turns out that under a mild restriction on $p$ the maximal dimension of an elementary subalgebra in the modular case agrees with Malcev's results.  To compute the variety $\bE(r_{\rm max}, \fg)$ we need to consider elementary subalgebras of $\fg$ up to conjugation by $G$ so our calculations and the end result differ from Malcev's, who classified abelian subalgebras up to automorphisms of $\fg$. Nonetheless, we find his linear algebraic approach very useful for our purposes.  

An analogous classification of elementary abelian $p$-subgroups in a Chevalley group has been considered by several authors, for example Barry~\cite{barryLargeAbelianSubgroups} and Milgram and Priddy~\cite{milgramPriddy}.  Barry classified maximal abelian subalgebras in the $p$-Sylow subgroups of $G(\bF_{p^r})$ for $G$  a  matrix group of classical type ($\SL_n$, $\Sp_{2n}$, $\SO_n$). Using a canonical Springer isomorphism from the nullcone of the Lie algebra $\fg = \Lie G$ to the unipotent variety of $G$, constructed by P. Sobaje \cite{sobajeSpringerIsos}, we recover Barry's results and also obtain analogous classifications for exceptional types. 
We also obtain information on the conjugacy classes of elementary abelian $p$-subgroups of  $G(\bF_{p^r})$. Thanks to the celebrated Quillen stratification theorem this has an immediate application to mod-$p$ group cohomology: The number of conjugacy classes of the elementary abelian $p$-subgroups gives the number of irreducible components of maximal dimension in $\Spec H^*(G(\bF_{p^r}), \overline \bF_p)$. 

The computation of $\bE(\fg) = \bE(r_{\rm max}, \fg)$ reduces to the case of a simple algebraic group $G$ with root system $\Phi$.  For a simple algebraic group, we compute $\bE(\fg)$ under the assumption that $p$ is {\it separably good} for $G$ (see \autoref{defn:sep-good}). We rely on the result of Levy et al.~\cite{lmtNilSubalgebras} to show that any elementary subalgebra of $\fg$ can be conjugated into $\fu \subset \fg$, the Lie algebra of the unipotent radical $U$ of the Borel subgroup $B \leq G$.
The calculation of $\bE(\fg)$ then proceeds in three steps.  First, we explicitly determine $\bE(\fu)$ for $\fu \subset \fg$ {\it as a set}. We define a map $\Lie\colon\max(\Phi) \to \bE(\fu)$ which sends a maximal set of commuting positive roots to an elementary subalgebra of maximal dimension in $\fu$ and show that there is an inverse map $\lt\colon\bE(\fu) \to \max(\Phi)$ which splits $\Lie$.  The map $\Lie$ is not necessarily surjective but we show that for all irreducible root systems $\Phi$ except for $G_2$ and $A_2$ it is surjective up to conjugation by $U$. Hence, we effectively prove that the maximal elementary subalgebras in $\fu$ up to conjugation are given by the combinatorics of the root system of $G$.  This calculation largely relies on the linear algebraic approach of Malcev and is split into several cases dictated by the existence of certain orderings on the corresponding root systems: 
\begin{enumerate}
\item $A_{2n+1}$, $B_2$, $B_3$, $C_n$, $E_7$ (in these cases, there is a unique maximal elementary subalgebra in $\fu$ given by a maximal set of commuting positive roots),
 \item $D_n$ (three maximal elementary subalgebras for $n=4$ and two for $n \geq 5$, all given by maximal sets of commuting positive roots),
 \item $B_n$ (three families of maximal elementary subalgebras for $n=4$, two families for $n \geq 5$, only given by maximal sets of commuting positive roots up to conjugation by $U$),
 \item $G_2$, $A_2$ (exceptional cases).
\end{enumerate}
If we allow conjugation by $G$, then a stronger results holds: with the exception of $G_2$ and $A_2$, every elementary subalgebra in $\bE(\fu)$ is conjugate to a subalgebra stabilized by the action of the Borel.  This observation greatly simplifies the calculation of stabilizers of the conjugacy classes which is the second step in determination of $\bE(\fg)$. The stabilizers of such subalgebras are parabolic subgroups which implies that the $G$-orbits in $\bE(\fg)$ are partial flag varieties. To finish the calculation in all types, except for $G_2$ and $A_2$, we prove in \autoref{thm:sep2} that for $\cE$ with parabolic stabilizer the orbit map $\xymatrix@=10pt{G \ar[r]& G \cdot \cE \subset \bE(\fg)}$  is separable.

When $p$ is not separably good, such as $p=2$ for $G=\PGL_2$, the answer for $\bE(\fg)$ can be somewhat surprising.  In \autoref{ex:pgl2} we utilize a construction from Levy et al.~\cite{lmtNilSubalgebras} to illustrate that.  We are grateful to Jared Warner for pointing out this example to us. We also illustrate in \autoref{ex:G2} that in general the orbit map $G \to G \cdot \epsilon \subset \bE(\fg)$ can fail to be separable. 

The ultimate outcome is that the projective variety $\bE(\fg)$ is a product of $\bE(\fg_i)$ where $\fg_i = \lie(G_i)$ and the $G_i$ range over the simple algebraic subgroups of the derived group of $G$.  When $G$ is simple $\bE(\fg)$ is a finite disjoint union of partial flag varieties {\it unless} $G$ is of type $G_2$ or $A_2$.  This is proved in \hyperref[thm:main]{Theorem~\ref*{thm:main}}:
\begin{thm*}
Let $G$ be a simple algebraic group with root system $\Phi$ which is not of type $A_2$ or $G_2$, and let $\fg = \Lie G$.  Assume that $p$ is separably good for $G$.  
Then 
\[\bE(\fg) = \coprod_{\substack{R \in \max(\Phi) \\ R \ \mathrm{an \ ideal}}}G/P_{R},\]
\end{thm*}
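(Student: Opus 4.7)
The plan is to assemble the theorem in five stages, each leaning on a result already stated or established in the paper.

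\emph{Step 1: Reduction to the unipotent radical.} By the theorem of Levy et al.~\cite{lmtNilSubalgebras} cited in the introduction, every elementary subalgebra of $\fg$ of maximal dimension is $G$-conjugate to one contained in $\fu = \Lie U$, where $U$ is the unipotent radical of a fixed Borel $B$. So determining $\bE(\fg)$ as a $G$-variety reduces to classifying the $G$-orbits of maximal-dimensional elementary subalgebras of $\fu$.

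\emph{Step 2: Reduction to ideal orbits.} The case-by-case analysis already carried out for every irreducible root system other than $A_2$ and $G_2$ shows two things: first, the map $\Lie\colon\max(\Phi)\to\bE(\fu)$ sending a maximal commuting set of positive roots to the span of the corresponding root vectors is $U$-conjugacy surjective, so every $\cE\in\bE(\fu)$ is $U$-conjugate to some $\Lie(R)$; second, under further $G$-conjugation one may always take $R$ to be an ideal in the set of positive roots, so that $\Lie(R)$ is $B$-stable. Consequently, the $G$-orbits in $\bE(\fg)$ are precisely $\{G\cdot \Lie(R)\}$ as $R$ ranges over ideals in $\max(\Phi)$.

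\emph{Step 3: Stabilizers are parabolic.} For an ideal $R\in\max(\Phi)$, the $B$-stability of $\Lie(R)$ together with the combinatorics of $R$ produces a standard parabolic $P_R\supseteq B$ (generated by $B$ and those negative simple root subgroups $U_{-\alpha}$ whose action still preserves $\Lie(R)$) with $P_R\subseteq\Stab_G(\Lie(R))$. For the reverse inclusion, use the Bruhat decomposition: after right-multiplication by an element of $P_R$ any $g\in\Stab_G(\Lie(R))$ may be assumed to lie in $U^-T$, and a direct root-space computation, which uses the hypothesis that $p$ is separably good for $G$ (to ensure no exceptional coincidences of the kind exhibited in \autoref{ex:pgl2}), forces $g\in P_R$. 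Hence $\Stab_G(\Lie(R))=P_R$ and the orbit map gives a set-theoretic bijection $G/P_R\to G\cdot\Lie(R)$.

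\emph{Step 4: Separability and disjointness.} By \autoref{thm:sep2}, since the stabilizer is parabolic the orbit map $G\to G\cdot\Lie(R)$ is separable, so $G/P_R\xrightarrow{\sim}G\cdot\Lie(R)\subseteq\bE(\fg)$ is an isomorphism of projective varieties and each orbit is closed. Distinct ideal choices $R\neq R'$ in $\max(\Phi)$ give disjoint orbits: a $G$-conjugacy between $\Lie(R)$ and $\Lie(R')$ would, by the uniqueness statement in Step~2 and the fact that $B$-stable subspaces of $\fu$ are precisely sums of root spaces, force $R=R'$. Combining, $\bE(\fg)$ is the disjoint union of the partial flag varieties $G/P_R$ as $R$ ranges over the ideal maximal commuting sets of positive roots.

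The main technical obstacle is Step~3, where we must rule out any ``extra'' stabilizing elements outside $P_R$; this is exactly the point at which the separably-good hypothesis on $p$ is essential, and it is also what allows \autoref{thm:sep2} to upgrade the set-theoretic bijection in Step~4 to an isomorphism of varieties.
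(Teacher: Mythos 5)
Your proposal is correct and follows essentially the same route as the paper: reduction to $\fu$ via Levy et al., the case-by-case classification showing every maximal elementary subalgebra is $G$-conjugate to $\lie(R)$ for an ideal $R \in \max(\Phi)$, identification of the stabilizers as standard parabolics, separability via \autoref{thm:sep2}, and disjointness via the Bruhat decomposition together with the non-conjugacy of distinct ideals. The one misplaced emphasis is in your Step 3: the group-theoretic stabilizer is automatically a standard parabolic simply because it contains $B$, and which parabolic it is follows combinatorially from $\Stab_W(R)$ with no hypothesis on $p$; the separably-good assumption is consumed entirely inside \autoref{thm:sep2}, where it forces the Lie-algebra normalizer $N_{\fg}(\lie(R))$ into $\lie(P_R)$ --- compare \autoref{ex:G2}, where the group stabilizer is still parabolic yet the orbit map fails to be separable.
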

Reinterpreted explicitly for each type, \autoref{thm:main} implies that $\bE(\fg)$ is a disjoint union of at most three copies of generalized Grassmannians in types $A_n$ ($n \not = 2$), $C, D, E$, whereas in types $B$ and $F$ ``two step" partial flag varieties appear (see \autoref{table:Eg}).  For types $A_2$, $G_2$ we show that $\bE(\fg)$ is an irreducible variety and compute its dimension and $G$-orbits. We find that this non-homogeneous answer partially justifies the fact that we have to resort  to case-by-case considerations in our calculations.

We note that calculation of $\bE(\fg)$ for $\fg = \Lie G$ with $G$ a special linear or symplectic group was done in Carlson et al.~\cite{CFP14a}.  The arguments in that paper are based on induction on the dimension and are qualitatively different from the arguments inspired by Malcev's approach used in this paper.

The paper is organized as follows. In \autoref{sec:prelim} we recall various conditions on $p$ such as good, very good, and torsion and define what it means to be separably good.  We also state the combinatorial classification of maximal sets  of commuting roots for irreducible root systems in \autoref{table:max}. We refer to Malcev~\cite{malcev} for this classification but also give a detailed explanation in the Appendix motivated largely by the fact that Malcev's paper only appears to be available in Russian.

The classification of the maximal elementary subalgebras of the unipotent radical $\fu \subset \fg$ up to conjugation by $G$ is settled in \autoref{sec:unip}.  \hyperref[sec:reduc]{Section~\ref*{sec:reduc}}, where we calculate the variety $\bE(\fg)$, contains the main result of the paper.  In \autoref{sec:groups} we apply our results to Chevalley groups, obtaining information on conjugacy classes of maximal elementary abelian $p$-subgroups.

Throughout the paper, $k$  will be an algebraically closed field of positive characteristic
$p$. We follow the convention in Jantzen~\cite{jantzen} and assume that our reductive algebraic $k$-groups are defined and split over $\bZ$.  By Chevalley group we mean a group of the form $G(\bF_{p^r})$ where $G$ is a reductive $k$-group, defined and split over $\bZ$.

{\it Acknowledgments.} The first author is indebted to Eric Friedlander for generously sharing his ideas and intuition about the variety $\bE(r, \fg)$.  The authors would like to thank Steve Mitchell for stimulating discussions at the onset of this project and for pointing out Suter's paper \cite{suterAbIdeals}.  We are also grateful to George McNinch, Paul Sobaje, and Jared Warner for sharing their expertise and patiently answering our structural questions about reductive groups in positive characteristic. 

\section{Preliminaries}
\label{sec:prelim}
\subsection{Notations and conventions} Let $G_{\bZ}$ be a split connected reductive algebraic $\bZ$-group.  Set $G_R = (G_{\bZ})_R$ for any ring $R$ and $G = G_k$.  Let $T_{\bZ} \subseteq G_{\bZ}$ be a split maximal torus and define $T_R$ and $T$ as we did with $G$.  Fix a Borel subgroup $B$ containing $T$ and let $U$ be the unipotent radical of $B$.  Let $X(T) = \Hom(T, \bG_m)$ be the character group and let $\Phi \subseteq X(T)$ be the root system associated to $G$ with respect to $T$.  Let $\Lambda_r(\Phi) = \bZ\Phi$ and $\Lambda(\Phi)$ be the root and integral weight lattices of $\Phi$ respectively.  The quotient $\Lambda(\Phi)/\Lambda_r(\Phi)$ is called the fundamental group of the root system $\Phi$.  If $\Phi$ is irreducible then its fundamental group is cyclic, except for type $D_n$ when $n$ is even, in which case one gets the Klein $4$-group.

If $G$ is semisimple then the quotient $\Lambda(\Phi)/X(T)$ is called the fundamental group of $G$.  We say that $G$ is simply connected if its fundamental group is trivial.   For reductive $G$ we denote by $G_{\mathrm{sc}}$ the simply connected semisimple group of the same type as $G$.  For any subgroup $A$ of the fundamental group of $\Phi$ there is a semisimple group $G$ with root system $\Phi$, fundamental group $A$, and a central isogeny $G_{\mathrm{sc}} \to G$ (see Knus et al.~\cite[25]{involutionsBook} for more details).  The following lemma clarifies the significance of the fundamental group of $G$ for our calculations.

\begin{lemma}[{\cite[2.4]{steinbergTorsion}}]
\label{sep}
Let $G$ be a semisimple algebraic group with root system $\Phi$.  If $p$ does not divide the order of the fundamental group of $G$ then the isogeny $G_{\mathrm{sc}} \to G$ is separable.
\end{lemma}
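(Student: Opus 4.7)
The plan is to interpret the separability of the isogeny $\pi \colon G_{\mathrm{sc}} \to G$ in terms of its scheme-theoretic kernel. Since $\pi$ is an isogeny, its kernel $N = \ker(\pi)$ is a finite central subgroup scheme of $G_{\mathrm{sc}}$, and $\pi$ is separable if and only if $N$ is étale (equivalently, if and only if the differential $d\pi_e$ is an isomorphism of Lie algebras). So the task reduces to identifying $N$ and checking when it is étale over $k$.

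First I would recall that for a semisimple simply connected group, the scheme-theoretic center $Z(G_{\mathrm{sc}})$ is a diagonalizable group scheme, namely the Cartier dual $D(\Lambda(\Phi)/\Lambda_r(\Phi))$ of the fundamental group of the root system. This comes out of the description of $G_{\mathrm{sc}}$ in terms of its root datum: the center is the kernel of the adjoint action on $\fg_{\mathrm{sc}}$, which on characters is $\Lambda(\Phi)/\Lambda_r(\Phi)$. Under the central isogeny $G_{\mathrm{sc}} \to G$, the maximal torus pulls back so that $X(T)$ sits between $\Lambda_r(\Phi)$ and $\Lambda(\Phi)$, and the kernel $N$ of $\pi$ corresponds Cartier-dually to the subquotient $\Lambda(\Phi)/X(T)$. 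Thus
\[
N \;\cong\; D\bigl(\Lambda(\Phi)/X(T)\bigr),
\]
and by definition the finite abelian group $\Lambda(\Phi)/X(T)$ is the fundamental group of $G$.

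Next I would invoke the standard criterion that a finite diagonalizable group scheme $D(A)$ over a field of characteristic $p$ is étale if and only if $p \nmid |A|$. This is immediate once one writes $D(A) = \Spec k[A]$ and decomposes $A$ into cyclic factors: a cyclic factor of order $n$ gives $\mu_n = \Spec k[x]/(x^n - 1)$, which is étale over $k$ exactly when $p \nmid n$. Applied to $A = \Lambda(\Phi)/X(T)$, the hypothesis that $p$ does not divide the order of the fundamental group of $G$ gives that $N$ is étale, hence $\pi$ is separable.

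The only mildly delicate point in such a plan is the identification of the scheme-theoretic kernel $N$ with $D(\Lambda(\Phi)/X(T))$, as opposed to merely the abstract group of $k$-points; this requires working with the full Hopf-algebra (or root-datum) description of the central isogenies rather than their $k$-point incarnations, since in bad characteristic the two can diverge precisely by an infinitesimal factor. Everything else is formal once this dictionary is in place, which is why Steinberg's reference can be invoked directly.
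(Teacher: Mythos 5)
Your proposal is correct and follows essentially the same route as the paper: both identify the scheme-theoretic kernel with the diagonalizable group scheme attached to $\Lambda(\Phi)/X(T)$ and deduce that it is \'etale (hence the isogeny separable) from the hypothesis that $p$ does not divide its order. The only cosmetic difference is that you verify \'etaleness via the decomposition into factors $\mu_n$, whereas the paper invokes directly that a finite group scheme whose coordinate ring has dimension prime to $p$ is \'etale.
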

\begin{proof} Let $H$ be the scheme-theoretic kernel of the isogeny $G_{\mathrm{sc}} \to G$. Then $H$ is a diagonalizable group scheme associated to the finite abelian group $\Lambda(\Phi)/X(T)$ (see, for example, Jantzen~\cite[II.1.6]{jantzen}). In particular, the dimension of the coordinate ring $k[H]$ is equal to the order of $\Lambda(\Phi)/X(T)$. The assumption on $p$ implies that $(p, \dim k[H]) = 1$, hence, $H$ is an \'etale group scheme, $k[H]$ is a separable algebra, and the map $G_{\mathrm{sc}} \to G$ is separable.
\end{proof}

If $\beta = \sum_im_i\alpha_i$ is the highest root written as a linear combination of simple roots then $p$ is \emph{bad} for $\Phi$ if $p = m_i$ for some $i$.  Similarly we may write the dual of this root as a linear combination of dual simple roots $\beta^\vee = \sum_im_i'\alpha_i^\vee$ and $p$ is \emph{torsion} for $\Phi$ if $p = m_i'$ for some $i$.  A prime is \emph{good} (respectively \emph{non-torsion}) if it is not bad (respectively torsion).  We say $p$ is \emph{very good} for $\Phi$ if $p$ is good for $\Phi$ and $p$ does not divide the order of the fundamental group of $\Phi$.  Some authors include this condition in the definition of non-torsion but we will not.  We instead say that a prime is \emph{very non-torsion} if it is non-torsion and $p$ does not divide the order of the fundamental group of $\Phi$.

\begin{defn}
\label{defn:sep-good}
If $G$ is a semisimple algebraic group we say that $p$ is {\it separably good} for $G$ if
\begin{enumerate}
\item $p$ is good for $G$,
\item the isogeny $G_{\mathrm{sc}} \to G$ is separable. 
\end{enumerate} 
If $G$ is a connected reductive group we say that $p$ is \emph{separably good} for $G$ if it is separably good for its derived group $[G, G]$.
\end{defn}

Note that very good implies separably good by \autoref{sep} but in type $A$ the separably good condition is less restrictive.  The simple groups of type $A_{n - 1}$ are $\SL_n/\mu_d$ for $d \mid n$ and by \autoref{sep} if $p \nmid d$ then $p$ is separably good.  In particular, $p$ is always separably good if $d = 1$ so the Special Linear group $\SL_n$ is covered by our results for all $p$.

Let $\fg$ and $\fu$ be the Lie algebras of $G$ and $U$ respectively.  For any ring $R$ let $\fg_R = \Lie G_R$ so that $\fg = \fg_k$.  Note that if $R \subseteq S$ are rings then one has $\fg_S = \fg_R \otimes_R S$.

\begin{defn}
Let $R \subseteq k$ be a subring.  An element $x \in \fg$ is \emph{defined over $R$} if there exists an $x' \in \fg_R$ such that $x = x' \otimes 1$.  A Lie subalgebra $\fh \subseteq \fg$ is \emph{defined over $R$} if there exists a Lie subalgebra $\fh' \subseteq \fg_R$ such that $\fh = \fh' \otimes_R k$.  We will call $x'$ and $\fh'$ \emph{$R$-forms} of $x$ and $\fh$ respectively.
\end{defn}

For a simply connected semisimple group $G$ the Lie algebra $\fg$ has a Chevalley basis $\set{x_\alpha, h_i \ | \ \alpha \in \Phi, \ 1 \leq i \leq \rank\Phi}$ defined over $\bZ$.  In particular, this means $[x_\alpha, x_\beta] = N_{\alpha, \beta}x_{\alpha + \beta}$, where $N_{\alpha_\beta} = 0$ if $\alpha + \beta \notin \Phi$ and when $\alpha + \beta \in \Phi$ we have $N_{\alpha, \beta} = \pm(r + 1)$ where $-r\alpha + \beta, \ldots, s\alpha + \beta$ is the $\alpha$-string through $\beta$.  The sign can be inductively determined depending on a choice of ordering for the roots and our choice that $N_{\alpha, \beta} = +(r + 1)$ when $(\alpha, \beta)$ is an extraspecial pair defined by this ordering (see Carter~\cite[4.2]{carterSGLT} for details).

For a general reductive group there exists a central isogeny $G_{\mathrm{sc}} \times D \to G$ where $D$ is some torus.  The preimage of the Borel $B \subseteq G$ is a Borel in the reductive group $G_{\mathrm{sc}} \times D$ and the isogeny restricts to an isomorphism between the unipotent parts of these Borels.  We may therefore define $x_\alpha \in \fg$ and $h_i = [x_{\alpha_i}, x_{-\alpha_i}]$ as the image under the isogeny of the corresponding elements of $\lie G_{\mathrm{sc}}$.  It may be the case that the $h_i$ do not span $\lie(T)$, nevertheless, $\set{x_\alpha \ | \ \alpha \in \Phi^+}$ is still a basis for $\fu$ and satisfies the same relations as in the simply connected case.  For our purposes the torus is somewhat irrelevant, so when this is the case we append to the set $\set{x_\alpha \ | \ \alpha \in \Phi}$ any basis of $\lie(T)$ and, with some abuse of terminology, call the result a Chevalley basis of $\fg$.

\begin{defn}
A subalgebra $\cE \subseteq \fg$ is called \emph{Chevalley} if it is spanned by some subset of a Chevalley basis.
\end{defn}

Finally, we note that good primes are greater than or equal to the length of the longest root string in the root system $\Phi$.  In particular, this implies that the structure constants $N_{\alpha, \beta} \in \bZ$ for the Chevalley basis are not divisible by $p$.  Thus in $\fg$ one has $[x_\alpha, x_\beta] \neq 0$ if and only if $\alpha + \beta \in \Phi$.

\begin{table}[ht]
\caption{Bad and torsion primes, fundamental groups, and maximal root string lengths.~\cite[2.13]{geckMalle}~\cite[9.2]{malletesterman}}
\centering
\begin{tabular}[b]{|r||c|c|c|c|c|c|c|c|c|}
\hline
Type & $A_n$ & \parbox{33pt}{\vspace{2pt}\centering$B_n$ \\ $(n \geq 2)$\vspace{2pt}} & \parbox{33pt}{\vspace{2pt}\centering$C_n$ \\ $(n \geq 3)$\vspace{2pt}} & \parbox{33pt}{\vspace{2pt}\centering$D_n$ \\ $(n \geq 4)$\vspace{2pt}} & $E_6$ & $E_7$ & $E_8$ & $F_4$ & $G_2$ \\
\hline
Bad & none & $2$ & $2$ & $2$ & $2, 3$ & $2, 3$ & $2, 3, 5$ & $2, 3$ & $2, 3$ \\
\hline
Torsion & none & $2$ & none & $2$ & $2, 3$ & $2, 3$ & $2, 3, 5$ & $2, 3$ & $2$ \\
\hline
$|\Lambda/\Lambda_r|$ & $n + 1$ & $2$ & $2$ & $4$ & $3$ & $2$ & $1$ & $1$ & $1$ \\
\hline
\parbox{55pt}{\vspace{2pt}Longest root string length\vspace{1pt}} & $2$ & $3$ & $3$ & $2$ & $2$ & $2$ & $2$ & $3$ & $4$ \\
\hline
\end{tabular}
\label{table:primes}
\end{table}

\subsection{Maximal sets of commuting roots}

Let $\Phi$ be an irreducible root system with positive roots $\Phi^+$, corresponding base $\Delta = \{\alpha_1, \ldots, \alpha_n\}$, and the Weyl group $W$. Throughout the paper, we follow the labeling in Bourbaki~\cite[6, \S 4]{bourbakiLie2}.  Let $S \subseteq \Delta$ be a set of simple roots and $\overline S = \Delta \setminus S$ its complement.  Then we define
\[
\Phir S = \Phi^+ \setminus \mathbb N\overline S
\]
to be the positive roots that cannot be written as a linear combination of the simple roots not in $S$.  Note that $S \mapsto \Phir S$ commutes with unions and intersections.  If $S = \set{\alpha_i}$ then we will write $\Phir i$ instead of $\Phir{\set{\alpha_i}}$.

For any $I \subset \Delta$ define the parabolic subgroup $W_I$ and its corresponding root system $\Phi_I$ as in Humphreys~\cite[1.10]{humphreysReflGrps}.  For $I = \Delta \setminus S$, we have $\Phir{S} = \Phi^+ \setminus \Phi_I^+$.  If $P_I = L_I \ltimes U_I$ is the standard parabolic determined by the subset $I = \Delta \setminus S$ with the Levi factor $L_I$ and the unipotent radical $U_I$, then the root subgroups $U_\alpha$ with $\alpha \in \Phir{S}$ are precisely the ones generating $U_I$.

\begin{lemma} \label{prop:radStab}
Let $S \subseteq \Delta$.  Then $\Stab_W(\Phir{S}) = W_{\Delta \setminus S}$.
\end{lemma}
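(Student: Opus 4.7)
The plan is to verify both inclusions $W_{\Delta\setminus S}\subseteq \Stab_W(\Phir{S})$ and $\Stab_W(\Phir{S})\subseteq W_{\Delta\setminus S}$, working with the reformulation $\Phir S=\Phi^+\setminus\Phi_I^+$ where $I=\Delta\setminus S$, as noted just before the lemma.

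For the containment $W_I\subseteq\Stab_W(\Phir{S})$, it suffices to check that each generator $s_\alpha$ with $\alpha\in I$ stabilizes $\Phir{S}$. Given $\beta\in\Phir{S}$, write $\beta=\sum_{\gamma\in\Delta}c_\gamma\gamma$, so that by definition $c_\gamma>0$ for some $\gamma\in S$. The formula $s_\alpha(\beta)=\beta-\langle\beta,\alpha^\vee\rangle\alpha$ only modifies the coefficient of $\alpha$, and since $\alpha\in I=\Delta\setminus S$ the $S$-coefficients of $s_\alpha(\beta)$ agree with those of $\beta$. Because $s_\alpha(\beta)$ is a root, its nonzero coefficients have a common sign; the surviving positive $S$-coefficient forces $s_\alpha(\beta)\in\Phi^+$ and, keeping a nonzero $S$-part, $s_\alpha(\beta)\in\Phir{S}$.

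For the reverse containment, suppose $w\in\Stab_W(\Phir{S})$. Then every $\alpha\in\Phir{S}$ satisfies $w(\alpha)\in\Phir{S}\subseteq\Phi^+$, so the inversion set $N(w)=\Phi^+\cap w^{-1}(\Phi^-)$ is disjoint from $\Phir{S}$, i.e.\ $N(w)\subseteq\Phi_I^+$. The standard characterization of parabolic subgroups of Coxeter groups (Humphreys~\cite[1.10]{humphreysReflGrps}) then yields $w\in W_I$. If one prefers to avoid invoking this directly, it can be reproved via the decomposition $w=vu$ with $u\in W_I$ and $v$ the minimal-length coset representative in $W/W_I$: by the first inclusion $u$ stabilizes $\Phir{S}$, so $u^{-1}(N(v))\subseteq\Phir{S}$; combined with $N(w)=N(u)\sqcup u^{-1}(N(v))\subseteq\Phi_I^+$ this forces $u^{-1}(N(v))\subseteq\Phir{S}\cap\Phi_I^+=\emptyset$, hence $v=e$ and $w=u\in W_I$.

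The only mildly delicate step is the sign-tracking argument in the first paragraph; the rest is formal manipulation of inversion sets. I do not anticipate a real obstacle, since the combinatorics of $\Phir{S}$ is built precisely so that reflections along $\Delta\setminus S$ act trivially on the ``$S$-part'' of a root.
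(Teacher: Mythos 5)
Your proof is correct; both inclusions are established and no step fails. The route differs from the paper's in both halves, though the underlying Coxeter combinatorics is the same. For $W_I \subseteq \Stab_W(\Phir{S})$ you track coefficients of simple roots under the generators $s_\alpha$, $\alpha \in I$, whereas the paper argues for arbitrary $w \in W_I$ at once: $W_I$ preserves $\Phi \setminus \Phi_I = \Phir{S} \cup -\Phir{S}$, and since $\ell_W$ restricted to $W_I$ equals $\ell_{W_I}$, all inversions of $w$ lie in $\Phi_I^+$, so no root of $\Phir{S}$ changes sign. Your generator computation is more elementary and self-contained; the paper's is shorter but leans on the length-equality fact. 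For the reverse inclusion the paper runs a minimal-counterexample argument (a shortest $w \notin W_I$ stabilizing $\Phir{S}$ must send every $\alpha \in I$ to a positive root, hence stabilize all of $\Phi^+$, forcing $w = e$), while you phrase it via inversion sets: $N(w) \subseteq \Phi_I^+$ together with the characterization of $W_I$ as the elements whose inversions lie in $\Phi_I$. These are essentially the same argument in two dialects, and your fallback derivation through $w = vu$ is sound. The one point worth making explicit in that fallback: the containment $u^{-1}(N(v)) \subseteq \Phir{S}$ needs not only that $u^{-1}$ preserves $\Phir{S}$ but also that $N(v) \subseteq \Phir{S}$, which follows from $v$ being the minimal-length coset representative (it sends $\Phi_I^+$ into $\Phi^+$, so its inversions avoid $\Phi_I^+$). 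As written you attribute the containment solely to the first inclusion; adding that one sentence closes the elision.
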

\begin{proof}
Let $I = \Delta \setminus S$.   As $W_I$ stabilizes $\Phi$ and $\Phi_I$, it also stabilizes $\Phi \setminus \Phi_I = \Phir{S} \cup -\Phir{S}$.  Now for any element $w \in W_I$ its length as an element of $W$ equals its length as an element of $W_I$.  Length is characterized by the number of positive roots that are sent to negative roots so such roots are in $\Phi_I$.  In particular, $w\Phir{S} \subseteq \Phir{S} \cup -\Phir{S}$ are positive so $w\Phir{S} \subseteq \Phir{S}$.  This proves that $W_I$ stabilizes $\Phir{S}$.

Conversely assume $w \in W \setminus W_I$ is of minimal length stabilizing $\Phir{S}$.  Then $w$ stabilizes $\Phi_I$ and for any $\alpha \in I$ we have $\ell(ws_\alpha) = \ell(w) + 1$ so $w(\alpha)$ is positive.  This means $w$ stabilizes $\Phi_I^+$, so it stabilizes all of $\Phi^+$, but the identity is the only such element in $W$.
\end{proof}

\begin{lemma} \label{lem:radConj}
The set $\Phir{S}$ is not conjugate to any other set of positive roots.
\end{lemma}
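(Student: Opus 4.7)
The plan is to reduce the statement to the previous lemma: I claim that any $w \in W$ with $w\Phir{S} \subseteq \Phi^+$ must lie in $W_{\Delta \setminus S}$, after which \autoref{prop:radStab} gives $w\Phir{S} = \Phir{S}$.

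Let $I = \Delta \setminus S$ and write $w = uv$ via the standard factorization $W = W^I \cdot W_I$, where $W^I$ is the set of minimal length coset representatives for $W/W_I$ (Humphreys~\cite[1.10]{humphreysReflGrps}). By \autoref{prop:radStab} we have $v \in W_I = \Stab_W(\Phir{S})$, hence $v\Phir{S} = \Phir{S}$, and the hypothesis reduces to $u\Phir{S} \subseteq \Phi^+$.

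Next I would invoke the standard characterization: $u \in W^I$ if and only if $u(\alpha) \in \Phi^+$ for every simple $\alpha \in I$, and this condition actually forces $u\Phi_I^+ \subseteq \Phi^+$ (any positive root of the Levi subsystem stays positive under $u$, which follows from the length-additive factorization $\ell(uv') = \ell(u) + \ell(v')$ for $v' \in W_I$ and the identification of $\ell$ with the number of positive roots sent to negative ones). Combining $u\Phi_I^+ \subseteq \Phi^+$ with $u\Phir{S} \subseteq \Phi^+$ and the disjoint decomposition $\Phi^+ = \Phi_I^+ \sqcup \Phir{S}$ recorded just above \autoref{prop:radStab}, I obtain $u\Phi^+ \subseteq \Phi^+$. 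Since $u$ permutes $\Phi$ and $\Phi^+$ is finite, $u\Phi^+ = \Phi^+$, which forces $u = 1$. Therefore $w = v \in W_I$, as required.

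I do not expect any real obstacle: the entire argument is built on the combinatorics of the standard $(W^I, W_I)$ factorization together with the previous lemma and the disjoint decomposition of $\Phi^+$. The only mildly subtle point is the passage from $u(I) \subseteq \Phi^+$ to $u\Phi_I^+ \subseteq \Phi^+$, which is a well-known consequence of the length formula and could alternatively be phrased by noting that the set of inversions $N(u) = \{\alpha \in \Phi^+ : u\alpha < 0\}$ must be contained in $\Phi_I^+$, combined with the characterization of $W_I$ as those elements all of whose inversions lie in $\Phi_I^+$.
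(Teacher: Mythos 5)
Your proof is correct. It takes a mildly but genuinely different route from the paper's: both arguments hinge on \autoref{prop:radStab} and on the characterization of length via inversions, but you run the argument through the standard factorization $w = uv$ with $u$ a minimal-length coset representative in $W^I$ and $v \in W_I$, showing that $u$ preserves all of $\Phi^+ = \Phi_I^+ \sqcup \Phir{S}$ and hence is trivial. The paper instead argues contrapositively: assuming $w \notin \Stab_W(\Phir{S}) = W_{\Delta\setminus S}$, it writes $w = us_{\alpha_i}v$ with $v \in W_{\Delta\setminus S}$ and $\alpha_i \in S$, and exhibits an explicit root $v^{-1}(\alpha_i) \in \Phir{S}$ that $w$ sends to a negative root. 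The paper's version is shorter and produces a concrete witness; yours is more structural and avoids having to isolate the rightmost ``bad'' reflection in a reduced word, at the cost of invoking the (standard, but not entirely free) fact that $u \in W^I$ forces $u\Phi_I^+ \subseteq \Phi^+$ rather than merely $u(I) \subseteq \Phi^+$ --- you correctly flag this as the one point needing justification, and your sketch of it via inversion sets is sound. Both proofs are complete and at a comparable level of rigor.
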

\begin{proof}
If $w$ does not stabilize $\Phir{S}$ then it may be written in the form $w = us_{\alpha_i}v$ where $v \in W_{\Delta \setminus S}$ and $s_{\alpha_i} \notin W_{\Delta \setminus S}$, i.e., $\alpha_i \in S$.  By Humphreys~\cite[1.7]{humphreysReflGrps} we have $us_{\alpha_i}(\alpha_i) < 0$ so $w$ sends $v^{-1}(\alpha_i) \in \Phir{S}$ to a negative root.
\end{proof}

Two positive roots \emph{commute} if their sum is not a root.  A set of commuting roots is a set of positive roots which pairwise commute and $R \subseteq \Phi^+$ is a \emph{maximal} set of commuting roots if it is maximal with respect to order, i.e., if $R'$ is any other set of commuting roots then $|R'| \leq |R|$.

\begin{notation} Let $\max(\Phi)$ be the set of all maximal sets of commuting roots in $\Phi$.  Let $m(\Phi)$ be the order of a maximal set of commuting roots in $\Phi^+$.  If $\Phi$ is irreducible of type $T$ then we may write $\max(T)$ and $m(T)$ instead.
\end{notation}

To formulate the theorem on the maximal sets of commuting roots, we need to introduce additional notation for type $B_n$. We first recall notation from Bourbaki~\cite[6, \S 4.5]{bourbakiLie2}:
\begin{notation}\label{not:Bn} Type $B_n$.
\begin{align*}
\epsilon_i & = \ \alpha_i + \alpha_{i+1} + \cdots + \alpha_n, & \quad 1 \leq i \leq n\\
\epsilon_i + \epsilon_j & = \  (\alpha_i + \alpha_{i+1} + \cdots + \alpha_n) + (\alpha_j + \alpha_{j+1} + \cdots + \alpha_n), & \quad 1 \leq i < j \leq n\\
\epsilon_i - \epsilon_j & = \ \alpha_i + \alpha_{i+1} + \cdots + \alpha_{j-1}, & \quad 1 \leq i < j \leq n
\end{align*} 

\noindent
Define the following subsets of positive roots, as in Malcev~\cite{malcev}.
\begin{align*}
S_t &= \{\epsilon_t, \epsilon_i + \epsilon_j \, | \, 1 \leq i < j \leq n\}, & \quad t = 1, 2, \ldots, n \\
S^*_t &= \{\epsilon_t, \epsilon_i + \epsilon_j, \epsilon_{i^\prime} - \epsilon_n \, | \, 1 \leq i < j < n, 1 \leq i^\prime <n\}, & \quad t = 1, 2, \ldots, n  
\end{align*}
\end{notation}
We present what is known about the maximal sets of positive roots in \autoref{table:max}. The proofs can be found in Malcev~\cite{malcev}, see also \autoref{Appendix}.

\renewcommand{\arraystretch}{1.5}

\begin{table}[ht]
\caption{Maximal sets of commuting roots.}
\centering
\begin{tabular}[b]{|c|c|c|c|c|}
\hline
{\rm Type} $T$ & \parbox{50pt}{\vspace{3pt}\centering Restrictions on rank\vspace{3pt}}  & $\max(T)$ & $\#\max(T)$ & $m(T)$ \\
\hline \hline
$A_{2n}$ & $n \geq 1$ & $\Phir{n+1}$, $\Phir{n}$ & $2$ & $n(n+1)$ \\
\hline
$A_{2n+1}$ & $n \geq 0$ & $\Phir{n+1}$ & $1$ & $(n+1)^2$ \\
\hline
\multirow{6}{*}{$B_n$}& $n=2,3$ & $\Phir{1}$ & $1$ & $2n-1$ \\
\cline{2-5}
& $n=4$ & $\begin{array}{c} \Phir{1},\\S_1, S_2, S_3, S_4, \\ S^*_1, S^*_2, S^*_3 \end{array}$ & $8$ & $7$ \\
\cline{2-5}
& $n \geq 5$ & $\begin{array}{c} S_t, 1 \leq t \leq n \\ S_t^*, 1 \leq t  < n\end{array}$& $2n-1$ & $\frac{1}{2}n(n-1)+1$\\
\hline
$C_n$ & $n \geq 3$ & $\Phir{n}$ & $1$ & $\frac{1}{2}n(n+1)$  \\
\hline
\multirow{2}{*}{$D_n$}& $n=4$& $\Phir{1}, \Phir{3}, \Phir{4}$ & $3$ & $6$ \\
\cline{2-5}
& $n \geq 5$ & $ \Phir{n-1}, \Phir{n}$ & $2$ & $\frac{1}{2}n(n-1)$ \\
\hline
$E_6$ &&$\Phir{1}, \Phir{6}$&$2$&$16$ \\
\hline
$E_7$ &&$\Phir{7}$&1&27 \\
\hline
$E_8$ &&\begin{tabular}{p{0.7in}} none of the form $\Phir{i}$ \end{tabular}&$134$&$36$ \\
\hline
$F_4$ &&\begin{tabular}{p{0.7in}} none of the form $\Phir{i}$ \end{tabular}&$28$&$9$ \\
\hline
$G_2$ &&\begin{tabular}{p{0.7in}} none of the form $\Phir{i}$ \end{tabular}&$5$&$3$ \\
\hline
\end{tabular}
\label{table:max}
\end{table}

\begin{remark}[On cominuscule roots] Note that in types $A$, $C$, $D$, $E_6$, $E_7$, the sets of maximal commuting roots are given by $\Phir{i}$ with $\alpha_i$ a simple cominuscule root (see Billey and Lakshmibai~\cite{Billey-Lakshmibai} or Richardson et al.~\cite{Richardson-Rohle-Steinberg} for more on cominuscule roots).  One of the equivalent definitions of a simple cominuscule root $\alpha_i$ is that the unipotent radical of the corresponding parabolic is abelian \cite[Lem 2.2]{Richardson-Rohle-Steinberg} so the set $\Phir{i}$ is a natural candidate to be in $\max(T)$. As one sees from the table, the sets $\Phir{i}$ defined by a simple cominuscule root do have maximal dimension in almost all cases when they exist {\it except} for the most mysterious case of $B_n$.
\end{remark}

\begin{defn}
We say that $R \subseteq \Phi^+$ is an \emph{ideal} if $\alpha + \beta \in R$ whenever $\alpha \in \Phi^+$, $\beta \in R$, and $\alpha + \beta \in \Phi^+$.
\end{defn}

Note, for example, that the sets $\Phir{i}$ are always ideals.  The computations below require knowing which maximal sets of commuting roots are ideals and for such sets $R$ what is the stabilizer $\Stab_W(R) = \set{w \in W \ | \ wR = R}$.  This information can be found in \autoref{table:stab} (see \autoref{Appendix} for the calculation).

Note that $\max(\Phi)$ contains non-ideals only in types $B_n (n \geq 4)$, $E_8$, $F_4$, and $G_2$.  One can check in types $B_n$, $E_8$, and $F_4$ that every set in $\max(\Phi)$ is $W$-conjugatate to an ideal in $\max(\Phi)$.  In any type there is at most one ideal that is not of the form $\Phir{i}$ for some $i$, therefore in all types except $G_2$ \autoref{lem:radConj} gives that each set in $\max(\Phi)$ is conjugate to a \emph{unique} ideal in $\max(\Phi)$.  The exceptional case is $G_2$, where one finds that there are two orbits under the partial action of $W$ on $\max(\Phi)$ and only one contains an ideal.

\begin{table}[ht]
\caption{Maximal commuting ideals and their stabilizers}
\centering
\begin{tabular}[b]{|c|c|c|c|}
\hline
Type $T$ & Ideal $R$ & $\Stab_W(R)$ \\
\hline \hline
Any & $\Phir{i}$ & $W_{\Delta \setminus \set{\alpha_i}}$ \\
\hline
$B_n$, $n \geq 4$ & $S_1$ & $W_{\Delta \setminus \set{\alpha_1, \alpha_n}}$ \\
\hline
$E_8$ & \begin{tabular}{p{0.7in}} $R$ is unique \end{tabular} & $W_{\Delta \setminus \set{\alpha_2}}$ \\
\hline
$F_4$ & \begin{tabular}{p{0.7in}} $R$ is unique \end{tabular} & $W_{\set{\alpha_1, \alpha_3}}$ \\
\hline
$G_2$ & \begin{tabular}{p{0.7in}} $R$ is unique \end{tabular} & $W_{\set{\alpha_2}}$ \\
\hline
\end{tabular}
\label{table:stab}
\end{table}

\subsection{Varieties $\bE(r, \fg)$}

\begin{defn}[Carlson et al.~\cite{CFP14a}]
An elementary subalgebra $\cE \subset \fg$ of dimension $r$
is a $p$-restricted Lie subalgebra of dimension $r$ which is commutative and has $p$-restriction equal to $0$.
We define
\[\bE(r,\fg) = \set{\cE \subset \fg \ | \ \cE \ \text{elementary subalgebra of dimension} \ r}\]
\end{defn}
So defined, $\bE(r, \fg)$ is a closed subset of the Grassmannian of $r$-planes in the vector space $\mathfrak g$ and hence has a natural structure of a projective algebraic variety.  In this paper, we are concerned with the varieties of elementary subalgebras of maximal dimension.  We set
\begin{align*}
r_{\rm max} &= \max\set{r \ | \ \bE(r, \fg) \ \text{is nonempty}}, \\
\bE(\fg) &= \bE(r_{\rm max}, \fg).
\end{align*}
\begin{thm} \label{thm:Eprod}
Let $\fg$ and $\fh$ be restricted Lie algebras whose maximal elementary abelians have dimensions $r$ and $s$ respectively.  Then $r + s$ is the dimension of a maximal elementary abelian in $\fg \oplus \fh$ and $\bE(\fg \oplus \fh) \simeq \bE(\fg) \times \bE(\fh)$.
\end{thm}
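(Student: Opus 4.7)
The plan is to prove both claims simultaneously via the direct-sum map and its inverse built from the coordinate projections. First, if $\cE \in \bE(r,\fg)$ and $\cF \in \bE(s,\fh)$, then $\cE\oplus\cF\subseteq\fg\oplus\fh$ has dimension $r+s$, is abelian because the bracket on $\fg\oplus\fh$ is componentwise, and has trivial $p$-restriction for the same reason, so $\bE(r+s,\fg\oplus\fh)$ is nonempty.

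To see that $r+s$ is actually the maximum, I would take an arbitrary elementary subalgebra $\cK\subseteq\fg\oplus\fh$ and observe that the two coordinate projections $\pi_\fg$ and $\pi_\fh$ are homomorphisms of restricted Lie algebras. Hence $\pi_\fg(\cK)\subseteq\fg$ and $\pi_\fh(\cK)\subseteq\fh$ are commutative with trivial $p$-restriction, i.e.\ elementary, giving $\dim\pi_\fg(\cK)\leq r$ and $\dim\pi_\fh(\cK)\leq s$. Since $\cK\subseteq\pi_\fg(\cK)\oplus\pi_\fh(\cK)$, I get $\dim\cK\leq r+s$. If equality holds, all the inequalities above are equalities, so $\pi_\fg(\cK)\in\bE(\fg)$, $\pi_\fh(\cK)\in\bE(\fh)$, and $\cK=\pi_\fg(\cK)\oplus\pi_\fh(\cK)$. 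This already yields a bijection of sets
\[\sigma\colon \bE(\fg)\times\bE(\fh)\longrightarrow\bE(\fg\oplus\fh),\qquad (\cE,\cF)\longmapsto\cE\oplus\cF.\]

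For the upgrade to an isomorphism of projective varieties, I would first note that $\sigma$ is a morphism because on the ambient Grassmannians the map $\Grass_r(\fg)\times\Grass_s(\fh)\to\Grass_{r+s}(\fg\oplus\fh)$, $(V,W)\mapsto V\oplus W$, is a closed immersion expressible polynomially in Pl\"ucker coordinates. For the inverse, I would use the tautological rank-$(r+s)$ subbundle $\cT$ on $\Grass_{r+s}(\fg\oplus\fh)$ and compose with the trivial projections to $\fg$ and $\fh$. Restricted to $\bE(\fg\oplus\fh)$, the dimension analysis of the previous paragraph forces the two image sheaves of $\cT$ to have constant ranks $r$ and $s$ respectively, so they are subbundles of the trivial bundles and hence classify morphisms $\bE(\fg\oplus\fh)\to\bE(\fg)$ and $\bE(\fg\oplus\fh)\to\bE(\fh)$ by the universal property of the Grassmannian. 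Their product is the morphism inverse to $\sigma$.

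The main obstacle is this last step: the set-theoretic argument is essentially immediate, but one has to confirm that taking projections is an algebraic operation on the variety, and the clean way is to verify the constant-rank condition on the tautological bundle. Once the maximality of $r+s$ pins down the ranks, everything reduces to the universal property of Grassmannians and no further case analysis is needed.
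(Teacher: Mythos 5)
Your proof is correct and follows essentially the same route as the paper: project a maximal elementary subalgebra of $\fg\oplus\fh$ onto the two factors, use maximality to force it to split as a direct sum of maximal elementary subalgebras, and then upgrade the resulting bijection to an isomorphism of varieties. The only (cosmetic) difference is in the last step, where the paper verifies the isomorphism on the standard affine charts of the Grassmannian while you invoke the tautological bundle and the universal property; both are standard and equivalent.
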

\begin{proof}
If $\cE \subseteq \fg \oplus \fh$ is maximal and $\cE_1 \subseteq \fg$ and $\cE_2 \subseteq \fh$ are its images under the projections to $\fg$ and $\fh$ respectively then $\cE_1 \oplus \cE_2 \subseteq \fg \oplus \fh$ is elementary abelian and contains $\cE$, hence equals $\cE$.  This proves that every maximal elementary abelian is a sum of necessarily maximal elementary abelians from $\fg$ and $\fh$.  Thus $r + s$ is the maximal dimension of an elementary abelian in $\fg \oplus \fh$ and there is a bijection $\bE(\fg) \times \bE(\fh) \to \bE(\fg \oplus \fh)$.  One then checks on the standard affine open sets of the Grassmannian that this is an isomorphism.
\end{proof}


\section{Unipotent case} \label{sec:unip}

In this section we assume that $G$ is a simple algebraic group and we compute $\mathbb E(\mathfrak u)$ as a {\it set}.  We do this by defining the leading terms associated to a particular subalgebra as a subset of the root system and showing that such a subset must be a maximal set of commuting roots.  We then perform a case by case analysis based on the information in \autoref{table:max} which shows that in most cases $\bE(\fu) = \lie(\max(\Phi))$, with $\lie(\max(\Phi))$ defined in \autoref{eq:lie}.  The exceptions are type $A_2$ where $\bE(\fu) \simeq \mathbb P^1$, type $B_n$ for $n \geq 4$ where $\bE(\fu)$ includes the lie subalgebras $B(a_1, \ldots, a_n)$ and $\exp(\ad(a_nx_{\alpha_n}))(C(a_1, \ldots, a_{n - 1}))$ defined below, and the exceptional types of which only $G_2$ is calculated explicitly.  For types $F$ and $E$ we verify later that under the adjoint action one has $\bE(\fu) \subseteq G\cdot\lie(\max(\Phi))$.  We note that the condition $\bE(\fu) \subseteq G\cdot\lie(\max(\Phi))$ holds in all types except $G_2$.

Though we are interested in the case when $p$ is separably good, the results in this section are valid so long as $p$ is greater than or equal to the length of the longest root string in $\Phi$.

\subsection{Correspondence with sets of commuting roots}
We must first choose a total ordering $\preceq$ on $\Phi^+$ which respects addition of positive roots, that is, if $\beta, \gamma, \lambda, \beta + \lambda, \gamma + \lambda \in \Phi^+$ and $\beta \preceq \gamma$ then $\beta + \lambda \preceq \gamma + \lambda$.  In examples the choice of ordering will depend on the root system but we note here that such a total ordering always exists: The standard ordering $\leq$ on $\Phi$ respects addition, as does a reverse lexicographical ordering with respect to any ordering of the simple roots.  This ordering will define the extraspecial pairs in our root system and consequently the signs in the structure constants of our Chevalley basis.  We will often construct such orderings through refinement.

\begin{defn}
Let $\succeq_1, \succeq_2, \ldots, \succeq_n$ be relations on a set $X$ which define $\succ_i$ and $=_i$ in the obvious way.  The relation
\[\succeq \ = (\succeq_1, \succeq_2, \ldots, \succeq_n)\]
is defined as follows: $x \succeq y$ if either
\begin{enumerate}
\item For some $1 \leq i \leq n$ we have $x \succ_i y$ and $x =_j y$ for all $j < i$, or
\item $x =_i y$ for all $1 \leq i \leq n$.
\end{enumerate}
We say that $\succeq$ is given by \emph{refining} $\succeq_1$, first by $\succeq_2$, then by $\succeq_3$, and so on.
\end{defn}
In simple terms we compare by first trying $\succeq_1$ and inductively trying $\succeq_{i + 1}$ if $\succeq_i$ gives equality.  We will always choose the $\succeq_i$ to be preorders which respect addition of positive roots and we will choose $\succeq_n$ to be, moreover, a total order.  Then $\succeq$ is a total order which respects addition.

Now for each set $R \subseteq \Phi^+$ of commuting roots there is an abelian Chevalley subalgebra $\lie(R) = \spn_k\set{x_\beta \ | \ \beta \in R}$ associated to $R$ with $\dim\lie(R) = |R|$.  As elements $x_\beta$ in the Chevalley basis are always $p$-nilpotent this is in fact an elementary abelian subalgebra.  We wish to show that this induces a map
\begin{equation}
\label{eq:lie}
\xymatrix@=15pt{\lie\colon\max(\Phi) \ar[r]& \mathbb E(\mathfrak u),}
\end{equation}
that is, that the Lie subalgebra associated to a maximal commuting set of roots has maximal dimension among all elemenatry subalgebras.  We do this using an argument of Malcev~\cite{malcev} which shows that there exists a surjection
\begin{equation}
\label{eq:lt}
\xymatrix@=15pt{\lt: \bE(\fu) \ar[r]& \max(\Phi)}
\end{equation}
that splits $\lie$.

Let $\cE \subseteq \mathfrak u$ be an elementary subalgebra.  The ordering $\succeq$ on $\Phi^+$ gives an ordering on the basis elements $x_\beta$ of $\mathfrak u$.  Choose the unique basis of $\cE$ which is in reduced echelon form with respect to this ordering and let $\lt(\cE)$ be the set of roots $\beta$ such that the corresponding $x_\beta$ are the leading terms in this reduced basis.  Observe that if $\beta$ and $\gamma$ are the leading terms of $b_1 = x_\beta + \langle\text{lower terms}\rangle$ and $b_2 = x_\gamma + \langle\text{lower terms}\rangle$ respectively, and if $\beta + \gamma \in \Phi^+$ then $[x_\beta, x_\gamma] = N_{\beta, \gamma}x_{\beta + \gamma}$ is the leading term of $[b_1, b_2]$.  Thus if $[b_1, b_2] = 0$ then $\beta$ and $\gamma$ commute.  This proves that $\lt(\cE)$ is a commuting set of roots.  Clearly $\lt(\lie(R)) = R$ so $\lt$ splits $\lie$ and both maps preserve maximality.

\subsection{Types $A_{2n + 1}$, $B_2$, $B_3$, $C_n$, and $E_7$}

In these types there is a unique maximal set of commuting roots of the form $\Phir{i}$ for some $i$.  Let $\succeq$ be the reverse lexicographic ordering given by $\alpha_i < \alpha_1 < \alpha_2 < \cdots$, so $\beta \succeq \gamma$ if when written as a linear combination of simple roots the coefficient of $\alpha_i$ in $\gamma$ is larger than the coefficient of $\alpha_i$ in $\beta$, or if those coefficients are equal then coefficient of $\alpha_1$ in $\gamma$ is larger, and so on.  With this ordering the roots in $\Phir{i}$, which all have nonzero $\alpha_i$ coefficient, are strictly smaller than the roots in $\Phi^+ \setminus \Phir{i}$, which have $0$ as the $\alpha_i$ coefficient.  The following lemma then gives that $\lie(\Phir{i})$ is the only possible maximal elementary subalgebra.

\begin{lemma} \label{lemOto}
If $\Phi^+ \setminus \lt(\cE) \succ \lt(\cE)$ then $\cE = \lie(\lt(\cE))$.
\end{lemma}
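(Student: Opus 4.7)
The plan is to unpack the definition of the reduced echelon basis used in the construction of $\lt(\cE)$ and to observe that the hypothesis kills every off-leading-diagonal coefficient in one stroke.

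First I would fix notation. Let $\{b_\beta \mid \beta \in \lt(\cE)\}$ be the unique basis of $\cE$ in reduced echelon form with respect to the ordered basis $\{x_\gamma \mid \gamma \in \Phi^+\}$ of $\fu$. By the definition of reduced echelon form, each vector has the shape
\[
b_\beta = x_\beta + \sum_{\substack{\delta \prec \beta \\ \delta \notin \lt(\cE)}} c_{\delta,\beta}\, x_\delta,
\]
since the non-leading terms appearing in any row of a reduced echelon form are (i) strictly smaller than the row's leading term and (ii) forbidden from being leading terms of other rows.

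The next step is the crux: I would apply the hypothesis $\Phi^+ \setminus \lt(\cE) \succ \lt(\cE)$ to show that the index set in the sum above is empty. Indeed, since $\beta \in \lt(\cE)$, the hypothesis forces every $\delta \in \Phi^+ \setminus \lt(\cE)$ to satisfy $\delta \succ \beta$; in particular no such $\delta$ can be $\prec \beta$. Therefore $b_\beta = x_\beta$ for every $\beta \in \lt(\cE)$, and $\cE = \spn_k\{x_\beta \mid \beta \in \lt(\cE)\} = \lie(\lt(\cE))$.

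I do not expect any serious obstacle here: the lemma is really a bookkeeping statement that records how the reduced echelon form interacts with the assumed ordering hypothesis. The only subtlety is making sure one correctly identifies which roots may appear as trailing terms in a reduced echelon vector, and that is a standard feature of reduced echelon form rather than anything specific to Lie theory.
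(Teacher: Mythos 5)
Your proof is correct and follows essentially the same route as the paper's: both arguments observe that any trailing term $x_\delta$ in a reduced echelon basis vector must satisfy $\delta \preceq \beta$, and that the hypothesis $\Phi^+ \setminus \lt(\cE) \succ \lt(\cE)$ combined with the reducedness condition forces all such coefficients to vanish. The only difference is cosmetic — you invoke reducedness first and the ordering hypothesis second, while the paper does the reverse.
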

\begin{proof}
Choose $\beta \in \lt(\cE)$ and let $b$ the element in the reduced echelon form basis of $\cE$ with leading term $x_\beta$.  The terms in $b - x_\beta$ are of the form $cx_\gamma$ with $\beta \succeq \gamma$, equivalently, $\gamma \in \lt(\cE)$.  So $c = 0$ because $x_\gamma$ is the leading term of some other basis element.  Thus we have $b = x_\beta$ and our reduced basis is $\set{x_\beta \ | \ \beta \in \lt(\cE)}$.
\end{proof}

\subsection{Type $A_2$}

Along with type $G_2$ this case will be exceptional in that it is not true that every elementary subalgebra is conjugate to a subalgebra in $\lie(\max(\Phi))$ and it is not true that every Weyl group orbit in $\max(\Phi)$ contains an ideal.  Consequently we will find below that the variety $\mathbb E(\mathfrak g)$ for a simple algebraic group of type $A_2$ is not a disjoint union of flag varieties.

The highest root $\alpha_1 + \alpha_2$ commutes with all positive roots so $x_{\alpha_1 + \alpha_2}$ is contained in any maximal elementary $\cE$.  As the dimension of such $\cE$ is $2$ this means $\cE$ may be generated by $x_{\alpha_1 + \alpha_2}$ and an element of the form $ax_{\alpha_1} + bx_{\alpha_2}$ where $a, b \in k$ are not both $0$.  One can check that no other conditions on $a$ and $b$ are needed to get an elementary abelian therefore we have a bijection
\begin{align*}
\mathbb P^1 &\simeq \bE(\fu) \\
[a : b] &\mapsto \langle ax_{\alpha_1} + bx_{\alpha_2}, x_{\alpha_1 + \alpha_2}\rangle.
\end{align*}
We note that each of these subalgebras is fixed under conjugation by $U < G$, conjugation by the torus allows us to scale the constants $a$ and $b$, and no two of these subalgebras are conjugate via a representative of a Weyl group element.  Thus up to conjugation there are three subalgebras:
\begin{align*}
& \langle x_{\alpha_1}, x_{\alpha_1 + \alpha_2}\rangle, \\
& \langle x_{\alpha_2}, x_{\alpha_1 + \alpha_2}\rangle, \\
& \langle x_{\alpha_1} + x_{\alpha_2}, x_{\alpha_1 + \alpha_2}\rangle.
\end{align*}

\subsection{Type $A_{2n}$, $n \geq 2$}

Consider positive roots $\beta$ and $\gamma$ written as a linear combination of the simple roots $\alpha_i$.  Define $\beta \succeq_1 \gamma$ if the sum of the coefficients of $\alpha_n$ and $\alpha_{n + 1}$ in the expression for $\gamma$ is greater or equal the sum for $\beta$.  Define $\succeq_2$ to be the reverse lexicographic ordering given by $\alpha_{n + 1} \prec \alpha_n \prec \alpha_1 \prec \alpha_2 \prec \cdots$.  Finally let $\succeq$ be the refinement of $\succeq_1$ by $\succeq_2$.  One can now check that
\[\Phi^+ \setminus (\Phir{n} \cup \Phir{n + 1}) \ \succ \ \Phir{n} \setminus \Phir{n + 1} \ \succ \ \Phir{n + 1} \setminus \Phir{n} \ \succ \ \Phir{n} \cap \Phir{n + 1}.\]

Let $\cE \subseteq \mathfrak u$ be an elementary subalgebra.  If $\lt(\cE) = \Phir{n + 1}$ then \autoref{lemOto} gives $\cE = \lie(\Phir{n + 1})$.  Assume $\lt(\cE) = \Phir{n}$.  Then there is a basis whose leading terms are contained in either $\Phir{n} \setminus \Phir{n + 1}$ or $\Phir{n + 1} \cap \Phir{n}$.  For the latter the argument in the proof of \autoref{lemOto} applies and we get that those basis elements are just the $x_\beta$ for $\beta \in \Phir{n + 1} \cap \Phir{n}$.  We must show that the same is true for basis elements whose leading term is in $\Phir{n} \setminus \Phir{n + 1}$.

We have
\begin{align*}
\Phir{n} \setminus \Phir{n + 1} &= \set{\epsilon_i - \epsilon_{n + 1} \ | \ 1 \leq i < n + 1} \\
\Phir{n + 1} \setminus \Phir{n} &= \set{\epsilon_{n + 1} - \epsilon_j \ | \ n + 1 < j \leq 2n + 1}
\end{align*} (where the notation follows Bourbaki~\cite[4.7]{bourbakiLie2})
so the remaining basis elements are of the form
\[b_i = x_{\epsilon_i - \epsilon_{n + 1}} + \sum_{n + 1 < j \leq 2n + 1}a_{ij}x_{\epsilon_{n + 1} - \epsilon_j}\]
for $1 \leq i < n + 1$.  Now we compute
\[[b_i, b_{i'}] = \sum_{n + 1 < j \leq 2n + 1}a_{i'j}N_{\epsilon_i - \epsilon_{n + 1}, \epsilon_{n + 1} - \epsilon_j}x_{\epsilon_i - \epsilon_j} + a_{ij}N_{\epsilon_{n + 1} - \epsilon_j, \epsilon_{i'} - \epsilon_{n + 1}}x_{\epsilon_{i'} - \epsilon_j}.\]
As $n \geq 2$ we may choose $j \neq j'$.  That this expression must equal $0$ consequently gives $a_{ij} = 0$ for all $i$ and $j$.  Thus $b_j = x_{\epsilon_{n + 1} - \epsilon_j}$ and $\cE = \Phir{n + 1}$ as desired.

\subsection{Type $B_n$, $n \geq 5$}

Recall that we define
\[\epsilon_i = \alpha_i + \alpha_{i+1} + \cdots + \alpha_n\]
where the $\alpha_i$ are the simple roots ordered as in Bourbaki~\cite[6, \S 4]{bourbakiLie2}.  Let $\succeq$ be the reverse lexicographical ordering given by
\[\alpha_1 \succ \cdots \succ \alpha_{n - 1} \succ \alpha_n.\]
Specifically, we have
\[\epsilon_r - \epsilon_s \succeq \epsilon_t \succeq \epsilon_i + \epsilon_j\]
for all $i, j, t, r, s$ and if $i < j$ and $r < s$ then
\begin{align*}
\epsilon_r &\succeq \epsilon_i && \text{if} \ i < r, \\
\epsilon_r + \epsilon_s &\succeq \epsilon_i + \epsilon_j && \text{if} \ j < s \ \text{or if} \ j = s \ \text{and} \ i < r, \\
\epsilon_r - \epsilon_s &\succeq \epsilon_i - \epsilon_j && \text{if} \ s < j \ \text{or if} \ j = s \ \text{and} \ i < r.
\end{align*}
One can also compute that if $i < j < n$ then $N_{\epsilon_i + \epsilon_n, \epsilon_j - \epsilon_n} = -N_{\epsilon_j + \epsilon_n, \epsilon_i - \epsilon_n} = 1$

Now if we define
\begin{align*}
R_1 &= \set{\epsilon_i + \epsilon_j \ | \ 1 \leq i < j < n} \\
R_2 &= \set{\epsilon_i + \epsilon_n \ | \ 1 \leq i < n} \\
R_3 &= \set{\epsilon_i - \epsilon_n \ | \ 1 \leq i < n}
\end{align*}
then the set of positive roots, $\Phi^+$, of $B_n$ is the union of the sets
\[\set{\epsilon_i - \epsilon_j \ | \ 1 \leq i < j < n} \succ R_3 \succ \set{\epsilon_i \ | \ 1 \leq i \leq n} \succ R_2 \succ R_1.\]
and
\begin{align*}
S_t &= R_1 \cup R_2 \cup \set{\epsilon_t} \\
S^*_t &= R_1 \cup R_3 \cup \set{\epsilon_t}  
\end{align*}
(see \autoref{not:Bn} for the definition of $S_t, S^*_t$).

One can check that the following subalgebras are elementary and are maximal if not all $a_i$ are zero.
\begin{align*}
B(a_1, \ldots, a_n) &= \spn_k\set{x_\beta, \sum_{i = 1}^na_ix_{\epsilon_i} \ \middle| \ \beta \in R_1 \cup R_2} \\
C(a_1, \ldots, a_{n - 1}) &= \spn_k\set{x_\beta, \sum_{i = 1}^{n - 1}a_ix_{\epsilon_i} \ \middle| \ \beta \in R_1 \cup R_3}.
\end{align*}

\begin{thm} \label{thmB}
If $n \geq 4$ and $\cE \in \max(B_n)$ satisfies $\lt(\cE) = S_t$ or $S^*_t$ then there exist $a_1, \ldots, a_n$ such that $\cE = B(a_1, \ldots, a_n)$ or $C(a_1, \ldots, a_{n - 1})^{\exp(\ad(a_nx_{\alpha_n}))}$ respectively.
\end{thm}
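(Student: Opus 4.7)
The key feature of the chosen ordering $\succeq$ is that $R_1, R_2$ lie strictly below every $\epsilon_j$, which in turn lie below $R_3$; and within $\{\epsilon_j\}$ one has $\epsilon_i \prec \epsilon_t$ iff $i < t$. Hence in the reduced-echelon basis of $\cE$, each basis element with leading term $x_\beta$ for $\beta \in R_1 \cup R_2$ can contain no nonzero lower-order term (every smaller root is itself in $\lt(\cE)$), so it equals $x_\beta$; and the basis element with leading term $x_{\epsilon_t}$ can only involve $x_{\epsilon_j}$ with $j < t$. Setting $a_i := c_i$ for $i < t$, $a_t := 1$, and $a_i := 0$ for $i > t$ therefore yields $\cE = B(a_1, \ldots, a_n)$. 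Elementarity is automatic, because the sum of any two roots among $\{\epsilon_i\} \cup R_1 \cup R_2$ never lies in $\Phi$.

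\textbf{Case $\lt(\cE) = S_t^*$, setup.} Write the reduced-echelon basis as $\{x_\beta : \beta \in R_1\}$ together with
\[b_i = x_{\epsilon_i - \epsilon_n} + \sum_{j \neq t} c_{ij} x_{\epsilon_j} + \sum_{k < n} d_{ik} x_{\epsilon_k + \epsilon_n}, \qquad b_* = x_{\epsilon_t} + \sum_{j < t} e_j x_{\epsilon_j} + \sum_{k < n} f_k x_{\epsilon_k + \epsilon_n},\]
where a priori $j$ ranges over $1 \leq j \leq n$ with $j \neq t$ in the $c$-sum. The coefficient of $x_{\epsilon_i}$ in $[b_i, b_j] = 0$ comes solely from $[x_{\epsilon_i - \epsilon_n}, c_{jn} x_{\epsilon_n}]$, forcing $c_{jn} = 0$, so the $c$-sum in fact runs over $j \in \{1, \ldots, n-1\} \setminus \{t\}$.

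\textbf{Case $\lt(\cE) = S_t^*$, the conjugation.} Set $a_n := f_t / N_{\epsilon_n, \epsilon_t}$ (well-defined since $p$ is good for $B_n$) and let $\phi := \exp(-a_n \ad(x_{\alpha_n}))$. Since $[x_{\alpha_n}, x_\beta] = 0$ for every $\beta \in R_1 \cup R_2$, the automorphism $\phi$ fixes those root vectors pointwise, so only the action on $b_*$ and the $b_i$ needs to be examined. A direct computation with the exponential series gives
\[\phi(b_*) = x_{\epsilon_t} + \sum_{j < t} e_j x_{\epsilon_j} + (f_t - a_n N_{\epsilon_n, \epsilon_t}) x_{\epsilon_t + \epsilon_n} + \sum_{j < t}(f_j - a_n e_j N_{\epsilon_n, \epsilon_j}) x_{\epsilon_j + \epsilon_n} + \sum_{t < k < n} f_k x_{\epsilon_k + \epsilon_n},\]
in which the first correction term vanishes by the choice of $a_n$. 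Reading $[b_i, b_*] = 0$ off on the coefficients of $x_{\epsilon_i + \epsilon_k}$ then forces $f_k = 0$ for $t < k < n$ and $f_j = a_n e_j N_{\epsilon_n, \epsilon_j}$ for $j < t$. Putting $a_t := 1$, $a_j := e_j$ for $j < t$, and $a_j := 0$ for $t < j < n$, this yields $\phi(b_*) = \sum_{i < n} a_i x_{\epsilon_i}$, which is the prescribed generator of $C(a_1, \ldots, a_{n-1})$. A parallel analysis of each $\phi(b_i)$, reducing modulo $\phi(b_*)$ and the fixed $R_1$-vectors, uses the remaining brackets $[b_i, b_j] = 0$ and $[b_i, b_*] = 0$ to pin down $c_{ij}$ and $d_{ik}$ uniquely, and to match them with the coefficients produced by applying $\phi^{-1}$ to the basis $\{x_{\epsilon_i - \epsilon_n}\}_{i < n}$ of $C(a_1, \ldots, a_{n-1})$. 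Hence $\phi(\cE) = C(a_1, \ldots, a_{n-1})$, i.e., $\cE = C(a_1, \ldots, a_{n-1})^{\exp(\ad(a_n x_{\alpha_n}))}$.

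\textbf{Main obstacle.} The delicate part is the bookkeeping in Case $S_t^*$: one has to cross-check a web of linear relations among the unknowns $c_{ij}, d_{ik}, e_j, f_k$ produced by many different bracket components, and verify that the single free scalar $a_n$ simultaneously trivialises every $x_{\epsilon_k + \epsilon_n}$ contribution in $\phi(b_*)$ and is compatible with the conjugated $\phi(b_i)$. The hypothesis that $p$ is good for $B_n$ is used throughout, both to make sense of the factor $1/2$ in the expansion of $\exp(\ad)$ and to guarantee that the structure constants $N_{\alpha, \beta}$ are nonzero whenever $\alpha + \beta \in \Phi$, which is what translates the vanishing of brackets into equations on the coefficients.
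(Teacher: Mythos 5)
Your overall strategy is the same as the paper's: handle $S_t$ by the echelon/leading-term argument, and for $S_t^*$ first kill the $c_{in}$ coefficients, then conjugate by $\exp(\ad(\lambda x_{\alpha_n}))$ with $\lambda$ chosen to remove the $x_{\epsilon_t+\epsilon_n}$ component of the element with leading term $x_{\epsilon_t}$, and finally use the vanishing of brackets to force the remaining coefficients to zero. The $S_t$ case is fine (though your parenthetical ``the sum of any two roots among $\{\epsilon_i\}\cup R_1\cup R_2$ never lies in $\Phi$'' is literally false, since $\epsilon_i+\epsilon_j$ is a root; elementarity of $B(a_1,\dots,a_n)$ needs the slightly different observation that triple sums of $\epsilon$'s are not roots, and in any case is not needed here since $\cE$ is assumed elementary).

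The genuine gap is in the $S_t^*$ case, where the entire content of the theorem lives and where you stop at ``a parallel analysis \dots pins down $c_{ij}$ and $d_{ik}$ uniquely.'' Two concrete problems. First, your one explicit deduction there is not valid as stated: the coefficient of $x_{\epsilon_i+\epsilon_k}$ in $[b_i,b_*]$ is not just $N_{\epsilon_i-\epsilon_n,\epsilon_k+\epsilon_n}f_k$; it also receives contributions of the form $c_{ij}e_{j'}N_{\epsilon_j,\epsilon_{j'}}$ (and, when $t\in\{i,k\}$, contributions $c_{ij}N_{\epsilon_t,\epsilon_j}$ from bracketing against $x_{\epsilon_t}$). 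So you cannot read off $f_k=0$ until you have first proved $c_{ij}=0$, which is exactly the order the paper follows: it extracts $c_{ij}=0$ from the coefficient of $x_{\epsilon_j+\epsilon_t}$ in $[x,y_i]$ \emph{before} touching the $b_s$ (your $f_s$). Second, and more seriously, you never address the coefficients $d_{ik}$ at all. The off-diagonal $d_{ij}$ ($i\neq j$) require choosing a third index $t$ distinct from $i,j$ (this is one place the hypothesis $n\geq 4$ enters, which your write-up never uses), and the diagonal $d_{ii}$ are not determined by any single bracket: each equation only gives $d_{ii}+d_{jj}=0$, and one must assemble the $3\times 3$ system
\[
d_{ii}+d_{jj}=d_{ii}+d_{tt}=d_{jj}+d_{tt}=0
\]
over three distinct indices $i<j<t<n$, whose nonsingularity depends both on $n\geq 4$ and on the sign computation $N_{\epsilon_i+\epsilon_n,\epsilon_j-\epsilon_n}=-N_{\epsilon_j+\epsilon_n,\epsilon_i-\epsilon_n}$ fixed by the chosen ordering (and on $p\neq 2$). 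Without carrying out this step the subalgebras $C(a_1,\dots,a_{n-1})$ are not actually pinned down, so the proof is incomplete rather than merely terse.
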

\begin{proof}
If $\lt(\cE) = S_t$ for some $t$ then the argument of \autoref{lemOto} immediately gives $\cE = B(0, \ldots, 0, 1, a_{t + 1}, \ldots, a_n)$ for some $a_{t + 1}, \ldots, a_n$.  Now assume that $\lt(\cE) = S^*_t$ for some $1 \leq t < n$.  The reduced echelon form basis of $\cE$ then consists of the elements $x_{\epsilon_i + \epsilon_j}$ where $1 \leq i < j < n$ and for $1 \leq i < n$ the elements
\[x = x_{\epsilon_t} + \sum_{s = 1}^{t - 1}a_sx_{\epsilon_s} + \sum_{s = 1}^{n - 1}b_sx_{\epsilon_s + \epsilon_n} \quad \text{and} \quad y_i = x_{\epsilon_i - \epsilon_n} + \sum_{s = 1}^nc_{is}x_{\epsilon_s} + \sum_{s = 1}^{n - 1}d_{is}x_{\epsilon_s + \epsilon_n}\]
for some $a_s$, $b_s$, $c_{sk}$, and $d_{sk}$.  That it's reduced means $c_{it} = 0$ for all $i$.  Notice that $\exp(\ad(\lambda x_{\alpha_n}))$ is upper triangular with respect to $\preceq$ so $\lt(\exp(\ad(\lambda x_{\alpha_n}))(\cE)) = \lt(\cE)$ and the element in the reduced basis of $\exp(\ad(\lambda x_{\alpha_n}))(\cE)$ with leading term $x_{\epsilon_t}$ is $\exp(\ad(\lambda x_{\alpha_n}))(x)$.  When $\lambda = -b_tN_{\epsilon_n, \epsilon_t}^{-1}$ we have
\[\exp(\ad(\lambda x_{\alpha_n}))(x) = x_{\epsilon_t} + \sum_{s = 1}^{t - 1}a_sx_{\epsilon_s} + \sum_{s = 1}^{t - 1}(b_s - a_sb_tN_{\epsilon_n, \epsilon_t}^{-1}N_{\epsilon_n, \epsilon_s})x_{\epsilon_s + \epsilon_n} + \sum_{s = t + 1}^{n - 1}b_sx_{\epsilon_s + \epsilon_n}.\]
As $x_{\epsilon_t + \epsilon_n}$ is not a term in this basis element it suffices to show that $\cE$ is the subalgebra $C(0, \ldots, 0, 1, a_{t + 1}, \ldots, a_{n - 1})$ when $b_t = 0$.  We do this by showing that the $b_s$, $c_{ij}$, and $d_{ij}$ must all be zero.

One can check that the coefficient of $x_{\epsilon_i}$ in $[y_i, y_j]$ is $N_{\epsilon_i - \epsilon_n, \epsilon_n}c_{jn}$ so $c_{jn} = 0$ for all $j$.  Also if $j \neq t$ then the coefficient of $x_{\epsilon_j + \epsilon_t}$ in $[x, y_i]$ is $N_{\epsilon_t, \epsilon_j}c_{ij}$ so $c_{ij} = 0$ for all $i, j$.  For $i \neq t$ the coefficient of $x_{\epsilon_i + \epsilon_t}$ in $[x, y_t]$ is $N_{\epsilon_i + \epsilon_n, \epsilon_t - \epsilon_n}b_i$ so $b_i = 0$ for all $i$.

If $i, j, t < n$ are distinct then the coefficients of $x_{\epsilon_j + \epsilon_t}$ and $x_{\epsilon_i + \epsilon_j}$ in $[y_i, y_t]$ are $N_{\epsilon_j + \epsilon_n, \epsilon_t - \epsilon_n}d_{ij}$ and $N_{\epsilon_i - \epsilon_n, \epsilon_j + \epsilon_n}d_{tj}$ respectively.  As $n$ is at least $4$ this gives $d_{ij} = 0$ for all $i \neq j$.

Finally, if $i, j < n$ are distinct then $N_{\epsilon_i - \epsilon_n, \epsilon_j + \epsilon_n}d_{jj} + N_{\epsilon_i + \epsilon_n, \epsilon_j - \epsilon_n}d_{ii}$ is the coefficient of $x_{\epsilon_i + \epsilon_j}$ in $[y_i, y_j]$.  For $i < j < t < n$ we thus get a system of equations
\begin{align*}
N_{\epsilon_i - \epsilon_n, \epsilon_j + \epsilon_n}d_{jj} + N_{\epsilon_i + \epsilon_n, \epsilon_j - \epsilon_n}d_{ii} &= d_{jj} + d_{ii} = 0 \\
N_{\epsilon_i - \epsilon_n, \epsilon_t + \epsilon_n}d_{tt} + N_{\epsilon_i + \epsilon_n, \epsilon_t - \epsilon_n}d_{ii} &= d_{tt} + d_{ii} = 0 \\
N_{\epsilon_j - \epsilon_n, \epsilon_t + \epsilon_n}d_{tt} + N_{\epsilon_j + \epsilon_n, \epsilon_t - \epsilon_n}d_{jj} &= d_{tt} + d_{jj} = 0
\end{align*}
whose unique solution is $d_{ii} = d_{jj} = d_{tt} = 0$.  This gives $d_{ii} = 0$ for all $i$ and completes the proof of the theorem.
\end{proof}

We note that $\mathbb E(\mathfrak u) = \lie(\max(\Phi))$ does not hold in type $B_n$, $n \geq 5$.  In the next proposition we show that any elementary subalgebra in $\bE(\fu)$ is $G$-conjugate to a subalgebra in $\lie(\max(\Phi))$. 
\begin{prop} 
\label{Prop:Bconj} Let $F/\bF_p$ be a field extension. 
Any $F$-point of $\mathbb E(\fu)$ is $G(F)$-conjugate to an elementary subalgebra in $\lie(\max(\Phi))$. 
\end{prop}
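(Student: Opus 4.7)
The plan is to apply Theorem~\ref{thmB} to reduce to the two parametric families $B(a_1,\dots,a_n)$ and $C(a_1,\dots,a_{n-1})$, and then to exhibit explicit conjugating elements in $G(F)$: the adjoint action of a maximal parabolic Levi to move the ``$x_{\epsilon_i}$-coordinate vector'' to a basis vector, together with a Weyl representative of $s_{\alpha_n}$ to interchange the two families.

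First I would record the reduction. For $n \geq 5$ one has $\max(B_n) = \{S_t\}_t \cup \{S^*_t\}_t$, so Theorem~\ref{thmB} implies every $\cE \in \bE(\fu)(F)$ is either $B(a_1,\dots,a_n)$ with the $a_i \in F$ not all zero, or is $U(F)$-conjugate, via some $\exp(-a_n x_{\alpha_n}) \in U(F) \subseteq G(F)$, to $C(a_1,\dots,a_{n-1})$ with the $a_i \in F$ not all zero. Consequently the proposition reduces to producing $g \in G(F)$ with $\Ad(g) B(a_1,\dots,a_n) = \lie(S_t)$ for some $t$, and $g \in G(F)$ with $\Ad(g) C(a_1,\dots,a_{n-1}) = \lie(S^*_{t'})$ for some $t'<n$.

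Next I would introduce the standard maximal parabolic $P \subseteq G$ with Levi $L$ of type $A_{n-1}$, obtained by omitting $\alpha_n$. The central cocharacter of $Z(L)^\circ$ induces an $\alpha_n$-weight grading on $\fu$, preserved by $L$: in particular $\fu_1 := \spn\{x_{\epsilon_i}\}_{i=1}^n$ (weight $1$) and $\fu_2 := \spn\{x_{\epsilon_i+\epsilon_j} : i<j\le n\}$ (weight $2$). As a representation of $[L,L]$, the space $\fu_1 \cong F^n$ is the natural $\SL_n$-representation, so $\SL_n(F) \subseteq L(F)$ acts transitively on $\fu_1(F)\setminus\{0\}$. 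For the $B$-case, write $B(a_1,\dots,a_n) = \fu_2 \oplus \spn(\sum a_i x_{\epsilon_i})$ and pick $l \in L(F)$ with $\Ad(l)(\sum a_i x_{\epsilon_i}) \in F^\times \cdot x_{\epsilon_t}$; since $\Ad(l)$ preserves $\fu_2$, this yields $\Ad(l) B(a_1,\dots,a_n) = \lie(S_t)$. For the $C$-case, fix a representative $\dot s \in N_G(T)(\bF_p) \subseteq G(F)$ of $s_{\alpha_n}$; since $s_{\alpha_n}$ fixes each $\epsilon_i+\epsilon_j$ for $i<j<n$ and each $\epsilon_i$ for $i<n$ while interchanging $\epsilon_i-\epsilon_n$ with $\epsilon_i+\epsilon_n$, conjugation by $\dot s$ carries $C(a_1,\dots,a_{n-1})$ to a subalgebra of the form $B(b_1,\dots,b_{n-1},0)$ with $b_i = \pm a_i$. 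Applying the $B$-case with a chosen $t<n$ and conjugating back by $\dot s^{-1}$ then produces $\lie(s_{\alpha_n}S_t) = \lie(S^*_t)$, the final identity following from $s_{\alpha_n}(R_1)=R_1$, $s_{\alpha_n}(R_2)=R_3$, and $s_{\alpha_n}(\epsilon_t)=\epsilon_t$ for $t<n$.

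The main obstacle I expect is verifying that the Levi action on $\fu_1(F)\setminus\{0\}$ is genuinely transitive across all isogeny forms of $G$. I would resolve this by passing through the simply connected cover $G^{\mathrm{sc}} \to G$, which restricts to an isomorphism on unipotent parts, and observing that the $\SL_n$-subgroup of the simply connected Levi $L^{\mathrm{sc}}$ acts on $\fu_1 \cong F^n$ as the standard representation; hence the image of $\SL_n(F)$ in $L(F)$ already acts transitively on nonzero vectors of $\fu_1$, irrespective of the form of $G$.
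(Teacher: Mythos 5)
Your proposal is correct, and it follows the same overall strategy as the paper --- reduce via \autoref{thmB} (and the $U(F)$-element $\exp(\ad(a_nx_{\alpha_n}))$) to the families $B(\underline a)$ and $C(\underline a)$, use a rational representative $\dot s_n$ to fold the $C$-family into the $B$-family, and then normalize the vector $\sum a_ix_{\epsilon_i}$ by conjugation inside the Levi of $P_{\Delta\setminus\{\alpha_n\}}$ --- but it executes the normalization step by a genuinely different key lemma. The paper performs an explicit Gaussian elimination: simple reflections $s_i$ ($i<n$) permute the $\epsilon_i$ so that $a_n\neq 0$, and the root-group elements $\exp(\ad(a_iN_{\epsilon_i-\epsilon_n,\epsilon_n}^{-1}x_{\epsilon_i-\epsilon_n}))$ (note $\epsilon_i-\epsilon_n$ is a Levi root) kill the remaining coefficients one at a time, landing on the single representative $B(1,0,\dots,0)=\lie(S_1)$; $F$-rationality is then observed by inspecting each conjugating element. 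You instead identify $\fu_1=\spn\{x_{\epsilon_i}\}$ as the natural module for the $A_{n-1}$-Levi, observe that $\Ad(L)$ preserves the grading piece $\fu_2=\spn\{x_{\epsilon_i+\epsilon_j}\}$ and that $B(\underline a)=\fu_2\oplus F(\sum a_ix_{\epsilon_i})$, and invoke transitivity of $\SL_n(F)$ on $F^n\setminus\{0\}$ in one stroke; your passage through $G_{\mathrm{sc}}$ correctly disposes of the isogeny issue. Your version is more conceptual and makes the rationality claim automatic, at the cost of the (routine) verification that $\fu_1$ really is the standard representation; the paper's version is more elementary and, by terminating at the single subalgebra $\lie(S_1)$, directly records that all of $\bE(\fu)(F)$ lies in one $G(F)$-orbit, a fact used later when counting conjugacy classes in $G(\bF_q)$. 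Your argument gives the same single-orbit statement with one extra sentence (the various $\lie(S_t)$ and $\lie(S_t^*)$ are visibly $G(F)$-conjugate to one another by the same Levi and Weyl elements), and your final conjugation back by $\dot s^{-1}$ to reach $\lie(S_t^*)$ is harmless but unnecessary, since $\lie(S_t)$ already lies in $\lie(\max(\Phi))$.
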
 

\begin{proof} We show that any elementary subalgebra in $\bE(\fu)$ defined over  $F$ is $G(F)$-conjugate to $\lie(S_1)$ where $S_1$ is as defined in \autoref{not:Bn}. 

The simple reflection $s_n$ acts by negating $\epsilon_n$ and fixing the remaining $\epsilon_i$ therefore any representative $\dot{s_n} \in N_G(T)$ conjugates 
$C(a_1, \ldots, a_{n - 1})$ to $B(a_1, \ldots, a_{n - 1}, 0)$.  Similarly $s_i$, where $i < n$, swaps $\epsilon_i$ with $\epsilon_{i + 1}$ and fixes the remaining $\epsilon_j$ so by conjugation we may assume our elementary subalgebra is of the form $B(a_1, \ldots, a_{n - 1}, 1)$.  Finally conjugation by $\exp(\ad(a_iN_{\epsilon_i - \epsilon_n, \epsilon_n}^{-1}x_{\epsilon_i - \epsilon_n}))$ lets us assume $a_i = 0$ and does not alter the remaining $a_j$, thus we have conjugated our subalgebra to $B(0, \ldots, 0, 1)$.  Using simple reflections we conjugate to $B(1, 0, \ldots, 0) = \lie(S_1)$ and are done. Note that if the variables $(a_1, \ldots, a_{n-1}, a_n)$ belong to the field $F$ then all conjugations we have to perform to reduce $C(a_1, \ldots, a_{n - 1})$ or $B(a_1, \ldots, a_{n - 1}, a_n)$ to $B(1, 0, \ldots, 0)$ are by elements in $G(F)$. 
\end{proof} 

\subsection{Type $B_4$}

We keep the ordering and choice of basis from the last section.  As \autoref{thmB} applies here as well, all that is left is to prove the following.

\begin{thm}
\label{thm:B4}
If $\lt(\cE) = \Phir{1}$ then $\cE = \lie(\Phir{1})$.
\end{thm}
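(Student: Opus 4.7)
My plan is to mimic the proof of \autoref{thmB}: write $\cE$ in reduced echelon form with respect to the ordering from the preceding subsection and then force every non-leading coefficient to vanish using the relations $[b_\beta, b_{\beta'}] = 0$. Concretely, for each $\beta \in \Phir{1} = \{\epsilon_1 - \epsilon_j,\ \epsilon_1,\ \epsilon_1 + \epsilon_j \mid j = 2, 3, 4\}$ one writes
\[
b_\beta = x_\beta + \sum_{\substack{\gamma \in \Phi^+ \setminus \Phir{1} \\ \gamma \prec \beta}} c^\beta_\gamma\, x_\gamma,
\]
where $\Phi^+ \setminus \Phir{1}$ is exactly the positive system of the $B_3$ subsystem on $\{\alpha_2, \alpha_3, \alpha_4\}$. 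Under the chosen ordering the two smallest roots of $\Phir{1}$ are $\epsilon_1 + \epsilon_2$ and $\epsilon_1 + \epsilon_3$, both smaller than every $B_3$ positive root, so $b_{\epsilon_1 + \epsilon_2} = x_{\epsilon_1 + \epsilon_2}$ and $b_{\epsilon_1 + \epsilon_3} = x_{\epsilon_1 + \epsilon_3}$ immediately; likewise $b_{\epsilon_1 + \epsilon_4} = x_{\epsilon_1 + \epsilon_4} + c\, x_{\epsilon_2 + \epsilon_3}$ has only one free parameter.

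The structural observation that organizes the computation is that $\Phir{1}$ is an abelian ideal of $\fu$. Writing $b_\beta = x_\beta + y_\beta$ with $y_\beta$ in the $B_3$ positive nilpotent subalgebra, one has
\[
[b_\beta, b_{\beta'}] = [y_\beta, y_{\beta'}] + \bigl([x_\beta, y_{\beta'}] - [x_{\beta'}, y_\beta]\bigr),
\]
with the first summand lying in $\lie(B_3^+)$ and the second in $\lie(\Phir{1})$, so both must vanish independently. Reading off the coefficient of each $x_\delta$ ($\delta \in \Phir{1}$) in the $\lie(\Phir{1})$-part of each commutator will produce a large linear system in the $c^\beta_\gamma$; the $\lie(B_3^+)$-part provides further constraints (in particular forcing $\spn\{y_\beta\}$ to be an abelian subalgebra of $\lie(B_3^+)$, hence of dimension at most $m(B_3) = 5$).

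The remaining work is bookkeeping. I would process the commutators $[b_\beta, b_{\beta'}]$ starting from those $\beta$'s with the fewest unknown coefficients (the bottom of $\Phir{1}$ in the ordering) and work upward, using each new relation to eliminate one $c^\beta_\gamma$ at a time, in the same spirit as the three-term system that forces $d_{ii} = 0$ at the end of the proof of \autoref{thmB}. The main obstacle will be combinatorial: there are $27$ potentially nonzero coefficients spread across the seven $b_\beta$ and $21$ commutator relations to choose from, so picking the correct pivot equation in the correct order is essential to avoid going in circles. The smallness of the ambient system ($B_4$ has only $16$ positive roots and the $B_3$ subsystem has $9$) makes the whole procedure tractable, and once every $c^\beta_\gamma$ vanishes we conclude $\cE = \lie(\Phir{1})$.
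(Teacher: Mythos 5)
Your proposal takes essentially the same route as the paper: write the reduced echelon form basis of $\cE$ with respect to the chosen ordering, impose vanishing of the $21$ pairwise commutators, and solve the resulting linear system in the non-leading coefficients by hand, concluding that they all vanish. The decomposition $[b_\beta, b_{\beta'}] = [y_\beta, y_{\beta'}] + \bigl([x_\beta, y_{\beta'}] - [x_{\beta'}, y_\beta]\bigr)$ coming from the fact that $\Phir{1}$ is an abelian ideal is a reasonable way to organize that bookkeeping, but it is the same argument.
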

\begin{proof}
The reduced echelon form basis of $\cE$ is of the form
\begin{align*}
x_i &= x_{\epsilon_1 + \epsilon_i} + \sum_{\substack{3 \leq r \leq i \\ 2 \leq s < r}}c_{isr}x_{\epsilon_s + \epsilon_r}, \quad \text{and} \\
y_j &= x_{\epsilon_1 - \epsilon_j} + \sum_{\substack{j \leq r \leq 4 \\ 2 \leq s < r}}d_{jsr}x_{\epsilon_s - \epsilon_r} + \sum_{\substack{r = 3, 4 \\ 2 \leq s < j}}e_{jsr}x_{\epsilon_s + \epsilon_r}
\end{align*}
for $i = 2, 3, 4$ and $j = 1, 2, 3, 4$.  These $7$ basis elements can be formed into $21$ possible commutators which must equal zero.  Setting their coefficients equal to zero gives a system of equations which can be solved by hand and whose unique solution is $c_{isr} = d_{jsr} = e_{jsr} = 0$ as desired.
\end{proof}

As above we do not get $\mathbb E(\mathfrak u) = \lie(\max(\Phi))$.  Instead we get $\bE(\fu) \subseteq G\cdot\lie(S_1) \cup \set{\lie(\Phir{1})}$.

\subsection{Type $D_n$, $n \geq 5$}

Define
\[\epsilon_i = \begin{cases} \alpha_i + \alpha_{i + 1} + \cdots + \alpha_{n - 2} + \frac12(\alpha_{n - 1} + \alpha_n) & \text{if} \ i \leq n - 2 \\ \frac12(\alpha_{n - 1} + \alpha_n) & \text{if} \ i = n - 1 \\  \frac12(\alpha_n - \alpha_{n - 1}) & \text{if} \ i = n. \end{cases}\]
where the $\alpha_i$ are the simple roots ordered as in Bourbaki~\cite[6, \S 4]{bourbakiLie2} and let $\succeq$ be the reverse lexicographical ordering given by
\[\alpha_{n - 1} \succ \cdots \succ \alpha_2 \succ \alpha_1 \succ \alpha_{n - 1} \succ \alpha_n.\]
One can check that for $i < j < n$ this gives $N_{\epsilon_i - \epsilon_n, \epsilon_j + \epsilon_n} = -N_{\epsilon_j - \epsilon_n, \epsilon_i + \epsilon_n} = 1$.

If we define
\[R = \set{\epsilon_i + \epsilon_j \ | \ 1 \leq i < j \leq n - 1}\]
then
\begin{align*}
\Phir{n} &= R \cup \set{\epsilon_i + \epsilon_n \ | \ 1 \leq i \leq n - 1} \\
\Phir{n - 1} &= R \cup \set{\epsilon_i - \epsilon_n \ | \ 1 \leq i \leq n - 1}
\end{align*}
and $\Phi^+$ is the union of the sets
\[\Phi^+ \setminus \Phir{\set{\alpha_1, \alpha_{n - 1}, \alpha_n}} \succ \Phir{1} \setminus \Phir{\set{\alpha_{n - 1}, \alpha_n}} \succ \Phir{n - 1} \setminus R \succ \Phir{n}\setminus R \succ R.\]

\begin{thm} \label{thmD}
If $n \geq 4$ and $\cE \in \max(D_n)$ satisfies $\lt(\cE) = \Phir{n}$ or $\Phir{n - 1}$ then $\cE = \lie(\Phir{n})$ or $\lie(\Phir{n - 1})$ respectively.
\end{thm}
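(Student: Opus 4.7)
The plan is to handle the two cases in turn, exploiting the carefully chosen ordering.

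For $\lt(\cE) = \Phir{n}$ the argument is immediate: the refinement was set up precisely so that $\Phir{n} = R \cup (\Phir{n}\setminus R)$ occupies the two lowest blocks of $\Phi^+$, so every root in $\Phi^+ \setminus \Phir{n}$ exceeds every root in $\Phir{n}$. Then \autoref{lemOto} gives $\cE = \lie(\Phir{n})$ with no further work.

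The case $\lt(\cE) = \Phir{n-1}$ is the substantive one, because $\Phir{n-1} = R \cup (\Phir{n-1}\setminus R)$ is \emph{not} downward closed: the block $\Phir{n}\setminus R = \{\epsilon_j + \epsilon_n : 1 \leq j \leq n-1\}$ lies strictly between $R$ and $\Phir{n-1}\setminus R$. First I would write out the reduced echelon basis of $\cE$. For $\beta \in R$ no lower terms are available, so those basis elements are simply $x_\beta$. For each $i = 1, \dots, n-1$, reducedness forbids any contribution from the other leading roots, and the only non-leading roots below $\epsilon_i - \epsilon_n$ are the $\epsilon_j + \epsilon_n$; thus
\[ b_i \;=\; x_{\epsilon_i - \epsilon_n} \;+\; \sum_{j=1}^{n-1} a_{ij}\, x_{\epsilon_j + \epsilon_n} \]
for scalars $a_{ij}$, and the goal is to show every $a_{ij}$ vanishes.

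The driving relations are $[b_i, b_k] = 0$ for $i \neq k$. Since neither $2\epsilon_n$-shifted sum $\epsilon_i + \epsilon_k - 2\epsilon_n$ nor $\epsilon_j + \epsilon_\ell + 2\epsilon_n$ is a root, the terms $[x_{\epsilon_i - \epsilon_n}, x_{\epsilon_k - \epsilon_n}]$ and $[x_{\epsilon_j + \epsilon_n}, x_{\epsilon_\ell + \epsilon_n}]$ vanish, and only the cross brackets $N_{\epsilon_i - \epsilon_n,\, \epsilon_j + \epsilon_n}\, x_{\epsilon_i + \epsilon_j}$ (nonzero exactly when $i \neq j$) survive. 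Extracting the coefficient of $x_{\epsilon_i + \epsilon_p}$ with $p \notin \{i, k\}$ produces a nonzero multiple of $a_{kp}$, so $a_{kp} = 0$; since $n \geq 4$, given any off-diagonal pair $(k, p)$ one can pick a third index $i$, and every off-diagonal entry vanishes. For the diagonal, the coefficient of $x_{\epsilon_i + \epsilon_k}$ in $[b_i, b_k]$ contributes from both the $j = k$ term of the first sum and the $j = i$ term of the second; using the antisymmetry $N_{\epsilon_i - \epsilon_n,\, \epsilon_j + \epsilon_n} = -N_{\epsilon_j - \epsilon_n,\, \epsilon_i + \epsilon_n}$ recorded earlier, the two contributions \emph{add} to give $a_{ii} + a_{kk} = 0$. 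Writing this for three distinct indices $i, k, \ell$ (possible since $n \geq 4$) and summing forces $a_{ii} = 0$ for every $i$. Hence $b_i = x_{\epsilon_i - \epsilon_n}$ and $\cE = \lie(\Phir{n-1})$.

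The main obstacle is the sign bookkeeping that makes the diagonal entries satisfy $a_{ii} + a_{kk} = 0$ rather than the useless $a_{ii} - a_{kk} = 0$; this is the exact point where the explicit antisymmetry identity for $D_n$ (the analogue of the identity used in \autoref{thmB} for type $B$) is indispensable. Everything else is linear-algebraic and follows the template of the $B_n$ proof.
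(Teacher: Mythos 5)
Your proof is correct and follows essentially the same route as the paper's: the $\Phir{n}$ case via \autoref{lemOto}, then for $\Phir{n-1}$ the reduced basis $x_{\epsilon_i-\epsilon_n}+\sum_j a_{ij}x_{\epsilon_j+\epsilon_n}$, killing off-diagonal entries by extracting the coefficient of $x_{\epsilon_i+\epsilon_p}$ with a third index (using $n\geq 4$), and killing the diagonal via the antisymmetry identity yielding $a_{ii}+a_{kk}=0$ for three distinct indices. The only cosmetic difference is that you make explicit why the like-signed brackets vanish ($\pm2\epsilon_n$-shifted sums are not roots), which the paper leaves implicit.
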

\begin{proof}
If $\lt(\cE) =\Phir{n}$ then \autoref{lemOto} immediately gives $\cE = \lie(\Phir{n})$, so assume $\lt(\cE) = \Phir{n - 1}$.  The reduced basis of $\cE$ then consists of the Chevalley basis elements corresponding to roots in $R$ and the elements
\[y_i = x_{\epsilon_i - \epsilon_n} + \sum_{s = 1}^{n - 1}a_{is}x_{\epsilon_s + \epsilon_n}\]
for $1 \leq i \leq n - 1$ and we want to show that $a_{ij} = 0$ for all $i, j$.

If $i, j, t < n$ are distinct then the coefficient of $x_{\epsilon_i + \epsilon_t}$ in $[y_i, y_j]$ is $N_{\epsilon_i - \epsilon_n, \epsilon_t + \epsilon_n}a_{jt}$.  As $n \geq 4$ this gives $a_{jt} = 0$ for all $j \neq t$.  Now for $i \neq j$ the coefficient of $x_{\epsilon_i + \epsilon_j}$ in $[y_i, y_j]$ is $N_{\epsilon_i - \epsilon_n, \epsilon_j + \epsilon_n}a_{jj} + N_{\epsilon_i + \epsilon_n, \epsilon_j - \epsilon_n}a_{ii}$.  Thus if $i < j < t < n$ we get a system of equations
\begin{align*}
N_{\epsilon_i - \epsilon_n, \epsilon_j + \epsilon_n}a_{jj} + N_{\epsilon_i + \epsilon_n, \epsilon_j - \epsilon_n}a_{ii} &= a_{jj} + a_{ii} = 0 \\
N_{\epsilon_i - \epsilon_n, \epsilon_t + \epsilon_n}a_{tt} + N_{\epsilon_i + \epsilon_n, \epsilon_t - \epsilon_n}a_{ii} &= a_{tt} + a_{ii} = 0 \\
N_{\epsilon_j - \epsilon_n, \epsilon_t + \epsilon_n}a_{tt} + N_{\epsilon_j + \epsilon_n, \epsilon_t - \epsilon_n}a_{jj} &= a_{tt} + a_{jj} = 0
\end{align*}
whose unique solution is $a_{ii} = a_{jj} = a_{tt} = 0$.  This gives $a_{ii} = 0$ for all $i$ and completes the proof of the theorem.
\end{proof}

\subsection{Type $D_4$}

We keep the ordering from the last section.  As \autoref{thmD} applies here as well, all that is left is to prove that if $\lt(\cE) = \Phir{1}$ then $\cE = \lie(\Phir{1})$.  The reduced basis of $\cE$ is of the form
\begin{align*}
& x_{\epsilon_1 - \epsilon_2} + a_{11}x_{\epsilon_2 - \epsilon_3} + a_{12}x_{\epsilon_3 - \epsilon_4} + a_{13}x_{\epsilon_2 - \epsilon_4} + a_{14}x_{\epsilon_3 + \epsilon_4} + a_{15}x_{\epsilon_2 + \epsilon_4} + a_{16}x_{\epsilon_2 + \epsilon_3} \\
& x_{\epsilon_1 - \epsilon_3} + a_{21}x_{\epsilon_3 - \epsilon_4} + a_{22}x_{\epsilon_2 - \epsilon_4} + a_{23}x_{\epsilon_3 + \epsilon_4} + a_{24}x_{\epsilon_2 + \epsilon_4} + a_{25}x_{\epsilon_2 + \epsilon_3} \\
& x_{\epsilon_1 - \epsilon_4} + a_{31}x_{\epsilon_3 + \epsilon_4} + a_{32}x_{\epsilon_2 + \epsilon_4} + a_{33}x_{\epsilon_2 + \epsilon_3} \\
& x_{\epsilon_1 + \epsilon_4} + a_{41}x_{\epsilon_2 + \epsilon_3} \\
& x_{\epsilon_1 + \epsilon_3} \\
& x_{\epsilon_1 + \epsilon_2} \\
\end{align*}
and we wish to show that the $a_{ij}$ are zero.  As in the $B_4$ case, setting commutators equal to zero yields a system of equations for the $a_{ij}$ which can be solved by hand and whose unique solution is $a_{ij} = 0$ for all $i, j$.

\subsection{Type $G_2$} \label{sec:G2unip}

As with type $A_2$ this case is exceptional in that it is not true that every elementary subalgebra is conjugate to a subalgebra in $\lie(\max(\Phi))$ and it is not true that every Weyl group orbit in $\max(\Phi)$ contains an ideal.

Recall that we assume the characteristic $p$ is good for $\Phi$, so explicitly we assume $p \neq 2, 3$.  Let $\alpha_1$ be the short root and $\alpha_2$ the long root in the basis, with $s_1, s_2 \in W$ the corresponding simple reflections.  We choose the reverse graded lexicographic ordering on $\Phi$ with $\alpha_1 \succ \alpha_2$ so that the positive roots are ordered as follows:
\[\alpha_1 \succ \alpha_2 \succ \alpha_1 + \alpha_2 \succ 2\alpha_2 + \alpha_2 \succ 3\alpha_1 + \alpha_2 \succ 3\alpha_1 + 2\alpha_2.\]
The $s_i$ act via
\begin{align*}
s_i(\alpha_i) &= -\alpha_i, \\
s_1(\alpha_2) &= 3\alpha_1 + \alpha_2, \\
s_2(\alpha_1) &= \alpha_1 + \alpha_2,
\end{align*}
and $W$ is the dihedral group of order $12$ generated by the reflection $s_1$ and rotation $s_1s_2$.  It is simple to check that the maximal sets of commuting positive roots fall into two orbits:
\begin{align*}
C_1 &= \set{\alpha_1, 3\alpha_1 + \alpha_2, 3\alpha_1 + 2\alpha_2}, \\
C_2 &= \set{\alpha_1 + \alpha_2, 3\alpha_1 + \alpha_2, 3\alpha_1 + 2\alpha_2}, \\
C_3 &= \set{\alpha_2, 2\alpha_1 + \alpha_2, 3\alpha_1 + 2\alpha_2},
\end{align*}
and
\begin{align*}
C_4 &= \set{\alpha_2, \alpha_1 + \alpha_2, 3\alpha_1 + 2\alpha_2}, \\
C_5 &= \set{2\alpha_1 + \alpha_2, 3\alpha_1 + \alpha_2, 3\alpha_1 + 2\alpha_2}.
\end{align*}
Only $C_5$ is an ideal.
\begin{thm}
Every $\cE \in \mathbb E(\mathfrak u)$ is $G$-conjugate to one of $\lie(C_3)$, $\lie(C_5)$, or $L = \langle x_{\alpha_2} + x_{3\alpha_1 + \alpha_2}, x_{2\alpha_1 + \alpha_2}, x_{3\alpha_1 + 2\alpha_2}\rangle$.  Moreover, these subalgebras are pairwise non-conjugate.
\end{thm}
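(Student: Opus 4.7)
The plan is to classify $\cE \in \bE(\fu)$ according to its image $\lt(\cE) \in \max(\Phi) = \{C_1,\ldots,C_5\}$, then use $G$-conjugation to collapse each case onto one of the three claimed subalgebras. Recall that $\max(\Phi)$ breaks into two $W$-orbits, $\{C_1, C_2, C_3\}$ and $\{C_4, C_5\}$, so representatives $\dot w \in N_G(T)$ of appropriate Weyl elements let us transport between same-orbit $C_i$'s; this reduces the analysis essentially to the two cases $\lt(\cE) = C_3$ and $\lt(\cE) = C_5$.  For $\lt(\cE) = C_5$, the three roots of $C_5$ are the three smallest positive roots in the chosen reverse graded lexicographic ordering, so $\Phi^+ \setminus C_5 \succ C_5$ and \autoref{lemOto} forces $\cE = \lie(C_5)$. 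The $\lt(\cE) = C_4$ case transports to this one via $\dot{s_1}$, up to a separate family-branch conjugate to $L$ that we handle by the $C_3$ analysis below.

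For $\lt(\cE) = C_3$, I would write $\cE$ in reduced echelon form,
\[
\cE_{a_1, a_2} = \langle x_{\alpha_2} + a_1 x_{\alpha_1 + \alpha_2} + a_2 x_{3\alpha_1 + \alpha_2},\ x_{2\alpha_1 + \alpha_2} + c \, x_{3\alpha_1 + \alpha_2},\ x_{3\alpha_1 + 2\alpha_2}\rangle,
\]
and impose $[b_i, b_j] = 0$ to force $c = \lambda a_1$ for a specific nonzero $\lambda$ built from structure constants. $T$-scaling acts by $(a_1, a_2) \mapsto (\zeta a_1, \zeta^3 a_2)$. Then conjugation by $\exp(\ad(t x_{\alpha_1}))$ with $t = -1/N_{\alpha_1, \alpha_2}$ annihilates the $x_{\alpha_1 + \alpha_2}$-coefficient of the first basis vector and, after row reduction (the forced $c = \lambda a_1 = 0$ at the output confirms consistency), transforms the second coordinate as $a_2 \mapsto a_2 + B a_1^3$ with $B = N_2 N_3/(3 N_1^2)$, where $N_i$ are the three consecutive structure constants along the $\alpha_1$-string through $\alpha_2$. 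Since this string has length $4$, one computes $N_1 N_2 N_3 = \pm 6$, whence $B = \pm 2 \neq 0$ as $p \neq 2, 3$. Hence every $\cE_{a_1, a_2}$ is $B$-conjugate to some $\cE_{0, *}$, and then $T$-rescaling gives either $\cE_{0, 0} = \lie(C_3)$ or $\cE_{0, 1} = L$.

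For the pairwise non-conjugacy, I would compute $\dim N_G(\cE)$ for each of the three subalgebras by checking which root subgroups $U_\gamma$ and which subtori normalize $\cE$. For $\lie(C_5)$ one recovers the parabolic stabilizer predicted by \autoref{table:stab}: all $U_\gamma$ with $\gamma \in \Phi^+$ normalize, as does $U_{-\alpha_2}$ and all of $T$, giving $\dim N_G(\lie(C_5)) = 2 + 6 + 1 = 9$. For $\lie(C_3)$, only $T$ and $U_\gamma$ for the five positive roots $\gamma \in \{\alpha_2, \alpha_1 + \alpha_2, 2\alpha_1 + \alpha_2, 3\alpha_1 + \alpha_2, 3\alpha_1 + 2\alpha_2\}$ contribute, giving $\dim N_G(\lie(C_3)) = 7$. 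For $L$ one loses $T$-stability ($L$ is preserved only by the codimension-one subtorus $\{\alpha_1^3 = 1\}$) while keeping the same five positive root subgroups, giving $\dim N_G(L) = 6$. The $G$-orbit dimensions $5, 7, 8$ are distinct, so the three subalgebras are pairwise non-conjugate.

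The main obstacle is the structure-constant computation for $B$ in the $C_3$ reduction: the plan rests on $B \neq 0$, which is what prevents the $a_1 \neq 0$ branch from collapsing uniformly onto the Chevalley orbit $\lie(C_3)$ and thereby produces the second orbit $G \cdot L$. Verifying $B \neq 0$ requires careful sign bookkeeping for the $G_2$ Chevalley basis and critically uses both $p \neq 2$ and $p \neq 3$; this nonvanishing is in fact the arithmetic shadow of the exceptional behavior (alongside $A_2$) noted throughout this section.
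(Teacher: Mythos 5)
Your overall architecture (casework on $\lt(\cE)$, straightening by unipotent and torus conjugation, then separating the three targets by stabilizer/normalizer dimension) matches the paper, and your $C_3$ analysis and the non-conjugacy argument via orbit dimensions $5$, $7$, $8$ are essentially correct. But there is a genuine gap in the $C_4$ case, which is in fact the hard part of the theorem. You claim that $\lt(\cE) = C_4$ ``transports to [the $C_5$ case] via $\dot{s_1}$.'' Write the reduced basis for this case as $x_{\alpha_2} + a_1x_{2\alpha_1 + \alpha_2} + a_2x_{3\alpha_1 + \alpha_2}$, $x_{\alpha_1 + \alpha_2} - 3a_1x_{3\alpha_1 + \alpha_2}$, $x_{3\alpha_1 + 2\alpha_2}$ (after killing the $x_{2\alpha_1+\alpha_2}$ term of the second vector). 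Since $s_1$ swaps $\alpha_2 \leftrightarrow 3\alpha_1 + \alpha_2$ and $\alpha_1 + \alpha_2 \leftrightarrow 2\alpha_1 + \alpha_2$, applying $\dot{s_1}$ when $a_1 \neq 0$ produces a second basis vector with leading term $x_{\alpha_2}$ and (after row reduction) a first vector with leading term $x_{\alpha_1+\alpha_2}$ --- that is, the leading-term set is $C_4$ again, and the transport goes nowhere. The paper escapes this loop only by conjugating with $\exp(\ad(ux_{\alpha_1}))\dot{s_1}\exp(\ad(vx_{\alpha_1}))$ for $u$ a root of $u^4 - 6u^2 - 2ua - 3$ distinct from $\pm1$ and a correspondingly chosen $v$; this is the real content of the proof and is entirely absent from your proposal. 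The same caution applies, more mildly, to your opening claim that Weyl transport reduces everything to two cases: conjugation by $\dot w$ does not preserve the leading-term type of a non-Chevalley subalgebra, so the five cases must each be examined (though $C_1$ and $C_2$ do straighten easily).

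A secondary point: your diagnosis that the argument ``rests on $B \neq 0$'' is misplaced. After the unipotent conjugation in the $C_3$ case the surviving parameter is the coefficient $a_2'$ of $x_{3\alpha_1+\alpha_2}$, which no upper-triangular conjugation can remove and which the torus can only rescale; the dichotomy $a_2' = 0$ versus $a_2' \neq 0$ (hence the orbit $G\cdot L$) exists whether $B$ vanishes or not --- if $B = 0$ the dichotomy is simply $a_2 = 0$ versus $a_2 \neq 0$. So the structure-constant computation you flag as the main obstacle is not actually load-bearing; the missing $C_4$ argument is.
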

\begin{proof}
As $C_3$ and $C_5$ are representatives of the two orbits we must show that every $\cE \in \mathbb E(\mathfrak g)$ is conjugate to either $L$ or $\lie(C_i)$ for any $i$.  We handle each of the five possible leading terms separately.

An abelian subalgebra with leading terms $C_1$ is generated by elements of the form
\begin{align*}
& x_{\alpha_1} + a_1x_{\alpha_1 + \alpha_2} + a_2x_{2\alpha_1 + \alpha_3}, \\
& x_{3\alpha_1 + \alpha_2}, \\
& x_{3\alpha_1 + 2\alpha_2}.
\end{align*}
for some $a_1, a_2 \in k$.  Conjugating by $\exp(\ad(\frac12a_2x_{\alpha_1 + \alpha_2}))\exp(\ad(a_1x_{\alpha_2}))$ gives $\lie(C_1)$.

An abelian subalgebra with leading terms $C_2$ is generated by elements of the form
\begin{align*}
& x_{\alpha_1 + \alpha_2} + a_1x_{2\alpha_1 + \alpha_2}, \\
& x_{3\alpha_1 + \alpha_2}, \\
& x_{3\alpha_1 + 2\alpha_2},
\end{align*}
for some $a_1 \in k$.  Conjugating by $\exp(\ad(-\frac12a_1x_{\alpha_1}))$ gives $\lie(C_2)$.

An abelian subalgebra with leading terms $C_3$ is generated by elements of the form
\begin{align*}
& x_{\alpha_2} + a_1x_{\alpha_1 + \alpha_2} + a_2x_{3\alpha_1 + \alpha_2}, \\
& x_{2\alpha_1 + \alpha_2} + 3a_1x_{3\alpha_1 + \alpha_2}, \\
& x_{3\alpha_1 + 2\alpha_2}.
\end{align*}
for some $a_1, a_2 \in k$.  Conjugation by $\exp(\ad(-a_1x_{\alpha_1}))$ allows us to assume $a_1 = 0$, giving
\begin{align*}
& x_{\alpha_2} + a_2x_{3\alpha_1 + \alpha_2}, \\
& x_{2\alpha_1 + \alpha_2}, \\
& x_{3\alpha_1 + 2\alpha_2}.
\end{align*}
If $a_2 = 0$ this is $\lie(C_3)$.  If $a_2 \neq 0$ then conjugation by $\alpha_2^\vee(\sqrt[3]{a_2})$ sends the subalgebra above to $L$.

The only subalgebra with leading terms $C_5$ is $\lie(C_5)$ so all that is left is $C_4$.  An abelian subalgebra with leading terms $C_4$ is generated by elements of the form
\begin{align*}
& x_{\alpha_2} + a_1x_{2\alpha_1 + \alpha_2} + a_2x_{3\alpha_1 + \alpha_2}, \\
& x_{\alpha_1 + \alpha_2} + a_3x_{2\alpha_1 + \alpha_2} - 3a_1x_{3\alpha_1 + \alpha_2}, \\
& x_{3\alpha_1 + 2\alpha_2}.
\end{align*}
for some $a_1, a_2, a_3 \in k$.  Conjugation by $\exp(\ad(-\frac12a_3x_{\alpha_1}))$ allows us to assume $a_3 = 0$.  If $a_1 = 0$ then conjugation by $\dot{s_1}$ yeilds a subalgebra generated by elements of the form
\begin{align*}
& ax_{\alpha_2} + x_{3\alpha_1 + \alpha_2}, \\
& x_{2\alpha_1 + \alpha_2}, \\
& x_{3\alpha_1 + 2\alpha_2},
\end{align*}
and the leading terms are either $C_3$ or $C_5$, so assume $a_1 \neq 0$.  Then conjugation by $\alpha_2^\vee(\sqrt[3]{a_1})$ allows us to assume $a_1 = 1$.  Our generators are now of the form
\begin{align*}
& x_{\alpha_2} + x_{2\alpha_1 + \alpha_2} + ax_{3\alpha_1 + \alpha_2}, \\
& x_{\alpha_1 + \alpha_2} - 3x_{3\alpha_1 + \alpha_2}, \\
& x_{3\alpha_1 + 2\alpha_2}.
\end{align*}
For any choice of $u, v \in k$ we may conjugate by $\exp(\ad(ux_{\alpha_1}))\dot{s_1}\exp(\ad(vx_{\alpha_1}))$ to get a subalgebra generated by terms of the form
\begin{align*}
& \lambda_1x_{\alpha_2} + ((u^3 + 3u + a)v + u^2 + 1)x_{\alpha_1 + \alpha_2} + \ \text{lower terms}\ldots, \\
& \lambda_2x_{\alpha_2} + (3(u^2 - 1)v + 2u)x_{\alpha_1 + \alpha_2} + \text{lower terms}\ldots, \\
& x_{3\alpha_1 + 2\alpha_2}.
\end{align*}
From the constant term one sees that for any $a$ the polynomial $u^4 - 6u^2 - 2ua - 3$ must always have a root distinct from $\pm1$ and necessarily nonzero.  Choose $u$ to be such a root.  Then
\[\frac{4u + a}{u^2 - 1} = \frac{u^2 + 3}{2u}\]
holds and, moreover, implies that the equations
\begin{align*}
3(u^2 - 1)v + 2u &= 0, \\
3(4u + a)v + u^2 + 3 &= 0
\end{align*}
determine a unique $v$.  Finally
\[u[3(u^2 - 1)v + 2u] + [3(4u + a)v + u^2 + 3] = 3[(u^3 + 3u + a)v + u^2 + 1]\]
so $(u^3 + 3u + a)v + u^2 + 1 = 0$.  Thus with this choice of $u, v$ we have conjugated the subgroup to one for whom $\alpha_1 + \alpha_2$ is not a leading term.  This means we are no longer in the case of leading terms $C_4$ and our previous arguments apply.  Hence we have shown that every $\cE \in \mathbb E(\mathfrak u)$ is $G$-conjugate to one of $\lie(C_3)$, $\lie(C_5)$, or $L$.

All that is left is to show that these three subalgebras are not conjugate.  For this we simply observe that their normalizers
\begin{align*}
N_{\mathfrak g}(\lie(C_3)) &= \langle h_1, h_2, x_\beta \ | \ \beta \in \Phi^+ \setminus \set{\alpha_1}\rangle, \\
N_{\mathfrak g}(\lie(C_5)) &= \langle h_1, h_2, x_\beta \ | \ \beta \in \Phi^+ \cup \set{-\alpha_2}\rangle, \\
N_{\mathfrak g}(L) &= \langle h_1 + 2h_2, x_\beta \ | \ \beta \in \Phi^+ \setminus \set{\alpha_1}\rangle,
\end{align*}
have dimensions $7$, $9$, and $6$ respectively.  Conjugate subalgebras have conjugate, hence equidimensional, normalizers therefore the subalgebras $\lie(C_3)$, $\lie(C_5)$, and $L$ are non-conjugate.
\end{proof}

\subsection{Types $E_6$, $E_8$, and $F_4$}

For these types there are too many cases for us to reasonably tackle them by hand.  Instead we have written Magma code that attempts to confirm that $\bE(\fu) \subseteq U\cdot\lie(\max(\Phi))$ holds for Lie algebras of a given type.  This code is available online~\cite{starkRootsMgm}, is successful for types $E_6$, $E_8$, and $F_4$, and moreover confirms that if $\cE \in \bE(\fu)$ is defined over a subfield $F \subseteq k$ then the conjugating element may be taken from the $F$-points of $U$.  We warn that while the computations for $E_6$ and $F_4$ are very quick the computation for $E_8$ takes several hours and it is helpful to import the list of maximal sets of commuting roots from code written in Sage~\cite{starkCommRoots}, where that aspect of the computation is much faster.

Summarizing the above discussion, we now have the following result.

\begin{thm}
Let $G$ be simple algebraic group with root system $\Phi$, not of type $A_2$ or $G_2$.  Then
\[\bE(\fu) = \bigcup_{I \in \max(\Phi)}U\cdot\lie(I).\]
Moreover, for any subfield $F \subseteq k$ the $F$-points of $\bE(\fu)$ are contained in $U(F)\cdot\lie(I)$ for some $I \in \max(\Phi)$.
\end{thm}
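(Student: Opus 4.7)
The plan is to synthesize the preceding type-by-type analyses into a single statement by tracking what the leading-term map $\lt\colon\bE(\fu)\to\max(\Phi)$ of \eqref{eq:lt} produces in each type. One inclusion is automatic: each $\lie(I)$ with $I\in\max(\Phi)$ is an elementary subalgebra of maximal dimension $m(\Phi)$ in $\fu$, and the right-hand side is a union of $U$-orbits through these. For the other inclusion, I would pick $\cE\in\bE(\fu)$, set $I=\lt(\cE)$, and exhibit an element $u\in U$ with $u\cdot\cE\in\lie(\max(\Phi))$.

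For the types to which \autoref{lemOto} applies -- namely $A_{2n+1}$, $B_2$, $B_3$, $C_n$, and $E_7$ -- one obtains $\cE=\lie(\Phir{i})$ outright, with no conjugation needed; and the arguments for $A_{2n}$ ($n\geq 2$), $D_n$ ($n\geq 5$), $D_4$, and the $\lt(\cE)=\Phir{1}$ branch of $B_4$ already yield $\cE=\lie(\lt(\cE))$ directly on the nose. The only classical type that genuinely requires nontrivial $U$-conjugation is $B_n$ ($n\geq 4$): by \autoref{thmB} any such $\cE$ equals either $B(0,\ldots,0,1,a_{t+1},\ldots,a_n)$ or $\exp(\ad(a_nx_{\alpha_n}))\cdot C(0,\ldots,0,1,a_{t+1},\ldots,a_{n-1})$, and repeated conjugation by elements of the shape $\exp(\ad(\lambda x_{\epsilon_i-\epsilon_n}))$ with $\lambda$ determined by the $a_i$ -- exactly the clearing step isolated in the proof of \autoref{Prop:Bconj} -- kills the remaining coefficients while staying inside root subgroups of $U$, landing in $\lie(S_t)$ or $\lie(S_t^*)$ respectively.

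For the exceptional types $E_6$, $E_8$, and $F_4$, there are too many configurations in $\max(\Phi)$ to check by hand, so I would invoke the Magma verification of \cite{starkRootsMgm}: it loops over each possible set of leading terms in $\max(\Phi)$ and produces an explicit element of $U$ conjugating the generic $\cE$ with those leading terms into $\lie(I)$ for some $I\in\max(\Phi)$. The $F$-rational refinement is then uniform across all types, because every conjugator constructed above has the form $\exp(\ad(\lambda x_\alpha))$ with $\lambda$ a polynomial in the coefficients of $\cE$ whose rational denominators are structure constants $N_{\alpha,\beta}\in\bZ$, each invertible in $F$ since $p$ is at least the longest root-string length. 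The principal obstacle is the bookkeeping for $E_8$, where $|\max(\Phi)|=134$; the code mitigates this by importing the precomputed list of commuting-root sets from the Sage script \cite{starkCommRoots}, confining the Magma run to the conjugation step and making the verification feasible in practice.
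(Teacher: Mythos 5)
Your proposal is correct and takes essentially the same route as the paper, whose ``proof'' of this theorem is literally a summary of the preceding type-by-type analysis: \autoref{lemOto} and the direct computations give $\cE = \lie(\lt(\cE))$ outright in the types you list, \autoref{thmB} handles $B_n$, the Magma verification covers $E_6$, $E_8$, $F_4$, and the $F$-rationality follows because every conjugator is $\exp(\ad(\lambda x_\alpha))$ with $\lambda$ a rational expression in the coefficients of $\cE$ and in structure constants prime to $p$. One small correction to your $B_n$ step: for a generator with pivot $x_{\epsilon_t}$ the clearing conjugators must be $\exp(\ad(\lambda x_{\epsilon_i - \epsilon_t}))$ with $\epsilon_i \prec \epsilon_t$ (so that $\epsilon_i - \epsilon_t$ is a \emph{positive} root and the conjugator lies in $U$), not the $x_{\epsilon_i - \epsilon_n}$ appearing in \autoref{Prop:Bconj} --- that proposition first normalizes the pivot to position $n$ using Weyl-group representatives, which are not available inside $U$, and positivity of the clearing roots is exactly the point that upgrades $G$-conjugacy to the $U$-conjugacy the theorem asserts.
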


If $\dot w \in N_G(T)$ is a representative of the Weyl group element $w \in W$ then $\dot w \cdot x_\alpha$ is proportional to $x_{w\cdot\alpha}$.  Thus if both $I$ and $wI$ are contained in $\max(\Phi)$ then $\dot w\cdot\lie(I) = \lie(wI)$.  As $\Phi$ is not of type $G_2$ we know that every set in $\max(\Phi)$ is $W$-conjugate to an ideal.  This gives the following corollary.

\begin{cor} \label{thm:EuIdeal}
Let $G$ be simple algebraic group with root system $\Phi$, not of type $A_2$ or $G_2$.  Then
\[\bE(\fu) \subseteq \bigcup_{\substack{I \in \max(\Phi) \\ I \ \mathrm{an \ ideal}}}G\cdot\lie(I).\]
Moreover, for any subfield $F \subseteq k$ the $F$-points of $\bE(\fu)$ are contained in $G(F)\cdot\lie(I)$ for some ideal $I \in \max(\Phi)$.
\end{cor}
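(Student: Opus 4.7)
The plan is to assemble the corollary directly from the preceding theorem together with the two observations made in the paragraph immediately above the statement; essentially no new calculation is required. The strategy is in two stages: first use the preceding theorem to reduce an arbitrary $\cE \in \bE(\fu)$ to a $U$-orbit of some $\lie(J)$ with $J \in \max(\Phi)$, and then promote this to a $G$-orbit of $\lie(I)$ for an ideal $I \in \max(\Phi)$ by conjugating by a Weyl group representative.

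Concretely, I would start with $\cE \in \bE(\fu)$ and apply the preceding theorem to produce $u \in U$ and $J \in \max(\Phi)$ satisfying $u \cdot \cE = \lie(J)$. Since $\Phi$ is not of type $G_2$, the discussion preceding the corollary (based on \autoref{table:max}, \autoref{table:stab}, and \autoref{lem:radConj}) guarantees a Weyl group element $w$ and an ideal $I \in \max(\Phi)$ with $wJ = I$. Choosing a representative $\dot w \in N_G(T) \subseteq G$, the observation that $\dot w \cdot x_\alpha$ is a nonzero scalar multiple of $x_{w\alpha}$ gives $\dot w \cdot \lie(J) = \lie(wJ) = \lie(I)$ termwise on the Chevalley basis. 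Composing the two conjugations yields $\dot w u \cdot \cE = \lie(I)$, which is exactly the claim $\cE \in G \cdot \lie(I)$.

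For the refinement to $F$-points, I would track rationality through the same construction: the preceding theorem supplies $u \in U(F)$, and since $G$ is defined and split over $\bZ$ throughout the paper, the standard Tits representatives of the simple reflections can be taken as products of exponentials of integral Chevalley basis elements, so they lie in $N_G(T)(\bF_p) \subseteq G(F)$. Any $\dot w$ is a product of such representatives, so $\dot w u \in G(F)$ and the conjugation takes place in $G(F)$. I do not expect any genuine obstacle here, since every ingredient is already established in the excerpt; the only minor subtlety is confirming the Weyl representatives may be chosen rationally over the prime field, which is immediate from the split $\bZ$-form assumption in force throughout the paper.
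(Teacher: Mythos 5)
Your proposal is correct and follows essentially the same route as the paper: the paper likewise combines the preceding theorem with the observation that $\dot w \cdot x_\alpha$ is proportional to $x_{w\alpha}$ (so $\dot w \cdot \lie(J) = \lie(wJ)$) and the fact that, outside type $G_2$, every set in $\max(\Phi)$ is $W$-conjugate to an ideal. Your extra remark that the Weyl representatives can be chosen over the prime field, which the paper leaves implicit for the $F$-rational statement, is accurate and harmless.
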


\subsection{Type $G_2$ when $p = 3$} \label{sec:G2p3}

We now break with the assumption, made at the start of the current section, that $p$ is equal or greater than the length of the longest root string in $\Phi$.  In type $G_2$ there are root strings of length $4$ and we now consider the case $p = 3$.

The results of this section are based on the property that if $\alpha \neq \beta$ are positive roots then $[x_\alpha, x_\beta] = 0$ if and only if $\alpha$ and $\beta$ commute.  The techniques that follow from that assumption are still valid in the current case if we replace ``commuting'' with a different combinatorial property that holds if and only if the corresponding elements of the Chevalley basis commute.

\begin{defn}
Let $\alpha \neq \beta$ be positive roots and $p$ a prime.  We say that $\alpha$ and $\beta$ \emph{$p$-commute} if they commute or if the $\alpha$-string through $\beta$ begins at $(-p + 1)\alpha + \beta$.
\end{defn}

The above notion is a direct translation of the condition that the structure constant $N_{\alpha, \beta}$ is either $0$ or $p$ (it cannot be a larger multiple of $p$) therefore the product $[x_\alpha, x_\beta] = 0$ if and only if $\alpha$ and $\beta$ $p$-commute.  There are $3$ sets of maximal $p$-commuting roots in $\Phi^+$, they are
\begin{align*}
R_1 &= \set{\alpha_1 + \alpha_2, 2\alpha_1 + \alpha_2, 3\alpha_1 + \alpha_2, 3\alpha_1 + 2\alpha_2}, \\
R_2 &= \set{\alpha_1, 2\alpha_1 + \alpha_2, 3\alpha_1 + \alpha_2, 3\alpha_1 + 2\alpha_2}, \\
R_3 &= \set{\alpha_2, \alpha_1 + \alpha_2, 2\alpha_1 + \alpha_2, 3\alpha_1 + 2\alpha_2}. \\
\end{align*}
Geometrically each set consists of $4$ rotationally consecutive roots.  As long and short roots alternate and the Weyl group is the Dihedral group on the long roots we see that these three sets are conjugate.

The three cases of elementary abelian Lie subalgebras that come from these roots are as follows.  If $\lt(\cE) = R_1$ then $\cE = \Lie(R_1)$ because $R_1$ is minimal in the ordering.  If $\lt(\cE) = R_2$ then
\[\cE = \langle x_{\alpha_1} + cx_{\alpha_1 + \alpha_2}, x_{2\alpha_1 + \alpha_2}, x_{3\alpha_1 + \alpha_2}, x_{3\alpha_1 + 2\alpha_2}\rangle.\]
Conjugating by $\exp(\ad(-cx_{\alpha_2}))$ yields $\Lie(R_2)$.  Lastly if $\lt(\cE) = R_2$ then
\[\cE = \langle x_{\alpha_2} + cx_{3\alpha_1 + \alpha_2}, x_{\alpha_1 + \alpha_2}, x_{2\alpha_1 + \alpha_2}, x_{3\alpha_1 + 2\alpha_2}\rangle.\]
Conjugating by $\exp(\ad(\sqrt[3]{c}x_{\alpha_2}))$ yields $\Lie(R_3)$.  Thus we have $\bE(\fu) \subseteq G\cdot\Lie(R_1)$.


\section{Calculation of $\bE(\fg)$ for $\fg = \Lie G$}
\label{sec:reduc}

We now calculate, when $p$ is separably good, the variety $\bE(\fg)$ when $\fg$ is the Lie algebra of a reductive group $G$.  We start by reducing to the case of a simple algebraic group.

\begin{thm}
Let $G_1, \ldots, G_n$ be the simple algebraic subgroups of the derived group $[G, G]$ and let $\fg_i = \lie(G_i)$.  Then
\[\bE(\fg) = \prod_{i = 1}^n\bE(\fg_i).\]
\end{thm}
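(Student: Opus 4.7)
My plan is to reduce to the product formula of \autoref{thm:Eprod} after establishing two facts: every elementary subalgebra of $\fg$ in fact lies in $\lie([G,G])$, and under the separably good hypothesis, $\lie([G,G])$ decomposes as the internal direct sum $\fg_1 \oplus \cdots \oplus \fg_n$.

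For the first step, I would use that $[G,G]$ is a smooth normal subgroup of $G$ with smooth quotient $G/[G,G]$ a torus, yielding an exact sequence of restricted Lie algebras
\[
0 \to \lie([G,G]) \to \fg \to \lie(G/[G,G]) \to 0
\]
in which the right-hand term is toral and therefore contains no nonzero $p$-nilpotent element. Since every element of an elementary $\cE$ is $p$-nilpotent, $\cE$ must project to zero, placing $\cE \subseteq \lie([G,G])$.

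For the second step, the simply connected cover of the semisimple group $[G,G]$ is the direct product $\prod_i G_{i,\mathrm{sc}}$, and the cover map factors as
\[
\textstyle\prod_i G_{i,\mathrm{sc}} \;\longrightarrow\; \prod_i G_i \;\longrightarrow\; [G,G],
\]
where the first arrow is a product of simply connected covers $G_{i,\mathrm{sc}} \to G_i$ and the second is multiplication. By the separably good hypothesis, the composite is a separable isogeny; a short diagram chase with kernels (a subgroup scheme of an \'etale group scheme is \'etale, and so is its image under a homomorphism) forces both factor maps to be separable as well. Passing to Lie algebras then gives $\bigoplus_i \fg_i \xrightarrow{\sim} \lie([G,G])$ as restricted Lie algebras.

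Combining the two steps, every elementary subalgebra of $\fg$ lies inside $\bigoplus_i \fg_i$, so the maximal elementary dimension agrees in both algebras and $\bE(\fg) = \bE(\bigoplus_i \fg_i)$; an iterated application of \autoref{thm:Eprod} then yields $\prod_i \bE(\fg_i)$. The main technical hurdle is the careful bookkeeping of separability, and specifically the observation that it is $\lie([G,G])$ that must be split rather than $\fg$ itself: the example $G = \GL_n$ with $p \mid n$, where $\lie(Z(G)^\circ) \cap \lie([G,G]) \neq 0$, shows that one cannot in general decompose $\fg$ as the direct sum of its center and derived subalgebra, so routing the argument through $\lie([G,G])$ is essential.
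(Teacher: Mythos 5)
Your proof is correct and takes the same route as the paper, whose entire proof is the citation of \autoref{thm:Eprod}; you have simply supplied the two reductions (every elementary subalgebra lands in $\lie([G,G])$ because the toral quotient has no nonzero $p$-nilpotents, and $\lie([G,G]) = \bigoplus_i \fg_i$ under the separably good hypothesis) that the paper leaves implicit. Your closing remark about $\GL_n$ with $p \mid n$ correctly identifies why the argument must be routed through $\lie([G,G])$ rather than through a direct-sum decomposition of $\fg$ itself.
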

\begin{proof}
Follows from \autoref{thm:Eprod}.
\end{proof}

Henceforth we assume that $G$ is a simple algebraic group.  In all but the $A_2$ and $G_2$ cases we find that $\bE(\fg)$ is a disjoint union of flag varieties corresponding to ideals of maximal commuting roots.  In the $A_2$ and $G_2$ cases we find that the $\bE(\fg)$ are irreducible varieties of dimensions $5$ and $8$ respectively, each given as the union of three  $G$-orbits.

\subsection{Reduction to the unipotent case}

Our first step is to show that every maximal elementary abelian may be conjugated into $\fu$, the Lie algebra of the unipotent radical of the Borel.  This is a result of Levy et.\ al.\ based on the following theorem.

\begin{thm}[{\cite[2.2]{lmtNilSubalgebras}}]
Let $G$ be a semisimple group and $p$ a very non-torsion prime for $G$.  Let $U$ be a unipotent subgroup scheme of $G$.  Then $U$ is contained in a Borel subgroup of $G$.
\end{thm}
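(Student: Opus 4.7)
The plan is to find a fixed point for the action of $U$ on the flag variety $G/B$ of $G$; any such fixed point corresponds to a coset $gB$ with $U \leq gBg^{-1}$, which gives a Borel containing $U$. So the theorem reduces to a scheme-theoretic fixed-point statement for unipotent group scheme actions on the projective variety $G/B$.

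First I would reduce to the connected case. The component group $\pi_0(U)$ is a finite unipotent group scheme over the algebraically closed base field, so its reduced part is a finite $p$-group. The classical Borel fixed-point theorem (applied to the Zariski closure in $\Aut(G/B)$) then produces a fixed point for the action of $\pi_0(U)_{\mathrm{red}}$, and by replacing $G/B$ with the corresponding fixed locus we are reduced to the case that $U$ is connected.

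Next, for connected $U$ I would choose a normal filtration $1 = U_0 \trianglelefteq U_1 \trianglelefteq \cdots \trianglelefteq U_n = U$ by closed subgroup schemes whose successive quotients are one-dimensional elementary unipotent, and hence each isomorphic to one of $\mathbb G_a$, $\mathbb G_{a(r)}$, or $\alpha_{p^r}$. Proceeding by induction on $n$, assume $U_{i-1}$ fixes a nonempty closed projective subscheme $Y \subseteq G/B$. Then $U_i/U_{i-1}$ acts on $Y$ and one must produce a fixed point; iterating to $i=n$ yields a $U$-fixed point in $G/B$. For the smooth quotient $\mathbb G_a$ this is the Borel fixed-point theorem applied on $Y$.

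The main obstacle is the infinitesimal step, where the quotient is $\alpha_{p^r}$ or $\mathbb G_{a(r)}$, and this is where the hypothesis that $p$ is very non-torsion becomes essential. Under this hypothesis one has a $G$-equivariant Springer isomorphism between the nullcone $\cN \subseteq \fg$ and the unipotent variety of $G$, and the passage between infinitesimal unipotent subgroup schemes and $p$-nilpotent restricted Lie subalgebras is well controlled. I would use this to transport the problem to the Lie algebra side: showing that the infinitesimal subgroup scheme at issue fixes a point of $Y$ becomes the statement that the corresponding $p$-nilpotent restricted subalgebra of $\fg$ is contained in $\lie(B')$ for some Borel $B' \leq G$, which holds for reductive $\fg$ in very non-torsion characteristic. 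This closes the induction and yields the desired Borel containing $U$.
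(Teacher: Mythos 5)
First, a point of reference: the paper does not prove this statement at all --- it is quoted verbatim from Levy--McNinch--Testerman \cite[2.2]{lmtNilSubalgebras} and used as a black box (via \autoref{cor:conj}). So your attempt can only be measured against the cited source, not against an argument in this paper. Your opening reduction is correct: $U$ lies in a Borel if and only if $U$ has a scheme-theoretic fixed point on $G/B$, since the stabilizer of $gB$ is exactly $gBg^{-1}$. The reduction to the connected case is also salvageable (though it should run the other way: first fix $U^{0}\trianglelefteq U$, then let the \'etale quotient $\pi_{0}(U)$, a finite $p$-group over $k=\bar k$, act on the fixed locus; the fixed point for a finite $p$-group on a projective scheme in characteristic $p$ is a standard counting/spreading-out fact, not literally Borel's theorem).

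The genuine gap is in the inductive step for the infinitesimal quotients. Your induction needs the following lemma: an infinitesimal unipotent group scheme ($\alpha_{p}$, say) acting on an arbitrary nonempty projective scheme $Y$ has a fixed point. That lemma is \emph{false}: for a supersingular elliptic curve $E$ the kernel of Frobenius is a copy of $\alpha_{p}$ acting on $E$ by translation with empty fixed-point scheme. This is precisely why the theorem is not a formal consequence of a filtration argument and why the very non-torsion hypothesis has to do real work. Your appeal to the Springer isomorphism does not repair this, for two reasons. First, it only addresses whether the infinitesimal piece fixes \emph{some} point of $G/B$ (equivalently, lies in some Borel), whereas the induction requires a fixed point inside $Y=(G/B)^{U_{i-1}}$; a Borel containing the lift of $U_{i}/U_{i-1}$ need not be one containing $U_{i-1}$, so the induction does not close. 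Second, the statement you invoke --- that a $p$-nilpotent restricted subalgebra of $\fg$ lies in $\lie(B')$ for some Borel $B'$ in very non-torsion characteristic --- is itself the main theorem of the same Levy--McNinch--Testerman paper, of comparable depth to the statement being proved, so as written the argument is essentially circular at its crucial step. To make this strategy work you would need either a fixed-point statement for the infinitesimal quotient acting on the specific scheme $Y$ (using that $Y$ is a fixed locus inside $G/B$, not an arbitrary projective scheme), or a globally different argument as in the cited source.
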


\begin{remark}
Reading the proof in Levy et al., one finds that the theorem above holds for any prime when $G$ is $\SL_n$ or $\Sp_n$,
and consequently it holds for any group which is separably isogeneous to $\SL_n$ or $\Sp_n$. 
\end{remark}

\begin{cor} \label{cor:conj}
Let $G$ be semisimple and $p$ either separably good or very non-torsion for $G$.  Then any elementary subalgebra $\cE$ of $\fg$ may be conjugated into $\fu$.
\end{cor}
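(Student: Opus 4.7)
The plan is to associate an abelian unipotent subgroup scheme $E \subseteq G$ to the elementary subalgebra $\cE$, apply the Levy–Magaard–Testerman theorem (or the remark extending it) to contain $E$ in a Borel, and then use Borel conjugacy to move $\cE$ into $\fu$.

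First, I would construct $E$ from $\cE$. The Lie algebra $\fg$ is the restricted Lie algebra of the first Frobenius kernel $G_{(1)}$, and the classical equivalence of categories between restricted Lie algebras and height-one infinitesimal group schemes (e.g., Demazure–Gabriel or Jantzen I.8) produces a unique height-one closed subgroup scheme $E \subseteq G_{(1)} \subseteq G$ with $\Lie(E) = \cE$. Since $\cE$ is abelian, $E$ is an abelian group scheme, and since every $x \in \cE$ satisfies $x^{[p]}=0$, the scheme $E$ is isomorphic to $(\Ga2)^{\dim\cE}$ and is therefore unipotent.

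Next, I would apply the cited theorem to the unipotent subgroup scheme $E$. When $p$ is very non-torsion for $G$ the theorem applies directly and places $E$ inside some Borel subgroup $B' \leq G$. The other hypothesis, that $p$ is separably good, is only a weaker condition than very non-torsion in type $A$: for $B_n$, $C_n$, $D_n$, and the exceptional types the tables show that $p$ good already forces $p$ to be coprime to the order of the fundamental group of $\Phi$, so very non-torsion holds. In type $A$, the semisimple group $G$ is of the form $\SL_n/\mu_d$ with $p \nmid d$, hence separably isogenous to $\SL_n$, and the remark following the Levy–Magaard–Testerman theorem tells us the conclusion still holds. So in either case, $E$ lies in some Borel $B'$.

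Finally, since $E \subseteq B'$ is unipotent, it is contained in the unipotent radical $U'$ of $B'$. Passing to Lie algebras gives $\cE = \Lie(E) \subseteq \Lie(U')$. All Borel subgroups of $G$ are conjugate, so there exists $g \in G$ with $g B' g^{-1} = B$, whence $\Ad(g)\cE \subseteq \Lie(U) = \fu$, as required.

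I expect the main obstacle to be the verification in Step 2 that the two hypotheses on $p$ together exhaust exactly the cases covered by the cited theorem plus its remark; this essentially reduces to a type-by-type inspection of \autoref{table:primes} to check that outside of type $A$, ``good'' already implies ``very non-torsion,'' so that only the $\SL_n$-like case needs the separably good extension. The construction of $E$ from $\cE$ is routine once one invokes the Jantzen/Demazure–Gabriel equivalence, and Step 3 is formal.
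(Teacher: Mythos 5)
Your proof is correct and follows essentially the same route as the paper: pass to the height-one infinitesimal subgroup scheme corresponding to $\cE$, invoke the Levy--Magaard--Testerman theorem (with its remark covering the separably good type-$A$ case), and descend from the Borel to its unipotent radical; your type-by-type check that good implies very non-torsion outside type $A$ just makes explicit what the paper leaves implicit. One small slip: the group scheme attached to an $r$-dimensional elementary subalgebra is $\mathbb G_{a(1)}^{\times r}$ (height one), not $(\mathbb G_{a(2)})^{r}$.
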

\begin{proof}
Per remark (a) following the above theorem (\emph{loc.\ cit.})\ observe that the first Frobenius kernel $G_1 \leq G$ is a closed normal subgroup isomorphic to the infinitesimal group scheme $\underline\fg$ that one obtains from the restricted Lie algebra $\fg$~\cite[I.9.6]{jantzen} and the conjugation action $G \curvearrowright G_1$ corresponds to the adjoint action $G \curvearrowright \fg$.  Let $\underline\cE \subseteq \underline\fg$ be the subgroup scheme corresponding to an elementary subalgebra.  As $\cE$ consists entirely of nilpotent elements of $\mathfrak g$ we have, by Engel's theorem, that $\underline\cE$ is a unipotent subgroup scheme of $G$.  The above theorem then gives that $\underline\cE$ may be conjugated into our choice of Borel $B$ and therefore $\cE$ may be conjugated into the Lie algebra of $B$.  As $\cE$ is nilpotent its conjugate must then lie in the nilpotent radical $\mathfrak u$ of this Lie algebra.
\end{proof}

\begin{remark}  Explicitly, the condition on $p$ in \autoref{cor:conj} amounts to $p$ separably good or $G$ of type $G_2$ and $p = 3$. The latter produces an interesting counter-example to the separability theorem \autoref{thm:sep}  which we explain in \autoref{ex:G2}. 
\end{remark}

\begin{example} \label{ex:pgl2}
This conjugation property is not true for a general prime.  One can check that when $p = 2$ the nilpotent cone of $\mathfrak{pgl}_2$ is a linear subspace of dimension $2$.  This is necessarily maximal among elementary subalgebras and is not contained in the Lie algebra of any Borel.  Examples of maximal elementary subalgebras which are not contained in a Borel exist for $\mathfrak{pgl}_3$ when $p = 3$ (see Levy et al.~\cite{lmtNilSubalgebras}) and $\mathfrak{pgl}_4$ when $p = 2$.  While it is true that for all $n$ there exists an elementary subalgebra of $\mathfrak{pgl}_n$ which is not contained in a Borel, it is not known for larger $n$ whether such an elementary subalgebra can be maximal.
\end{example}

\begin{cor} \label{thm:EgUnion}
Let $G$ be semisimple and $p$ separably good for $G$.  Then
\[\bE(\fg) = \bigcup_{\substack{R \in \max(\Phi) \\ R \ \mathrm{an \ ideal}}}G\cdot\lie(R).\]
\end{cor}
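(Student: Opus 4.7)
The plan is to combine two inputs already in hand: \autoref{cor:conj}, which $G$-conjugates every elementary subalgebra of $\fg$ into $\fu$, and \autoref{thm:EuIdeal}, which identifies the $G$-orbits on $\bE(\fu)$ with maximal commuting ideals of $\Phi^+$. Since \autoref{thm:EuIdeal} is proved only for root systems $\Phi$ not of type $A_2$ or $G_2$, the argument below tacitly assumes this restriction; the exceptional types are handled separately in the paper.

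First I would reduce to $G$ simple. If $G_1,\dots,G_n$ are the simple factors of $G = [G,G]$ with root systems $\Phi_i$, then $\Phi = \coprod_i \Phi_i$, a subset $R \subseteq \Phi^+$ is a maximal commuting ideal if and only if $R = \coprod_i R_i$ with each $R_i$ a maximal commuting ideal of $\Phi_i^+$, and $\fg$ decomposes compatibly as $\bigoplus_i \fg_i$. Combined with \autoref{thm:Eprod} and the observation that $p$ separably good for $G$ passes to each $G_i$, this reduces the claim to simple $G$.

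For simple $G$, I would handle each inclusion in turn. For $\bE(\fg) \subseteq \bigcup_R G\cdot\lie(R)$: given $\cE \in \bE(\fg)$, \autoref{cor:conj} provides $g \in G$ with $\cE' := g^{-1}\cdot\cE \subseteq \fu$. Since conjugation preserves dimension and every elementary subalgebra of $\fg$ conjugates into $\fu$, the two maximal dimensions $r_{\max}$ taken in $\fg$ and in $\fu$ coincide, so $\cE' \in \bE(\fu)$. Then \autoref{thm:EuIdeal} yields a maximal commuting ideal $R \in \max(\Phi)$ and $h \in G$ with $\cE' = h\cdot\lie(R)$, whence $\cE \in G\cdot\lie(R)$. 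For the reverse inclusion I would verify directly that each $\lie(R)$ with $R$ a maximal commuting ideal belongs to $\bE(\fg)$: it is abelian because $R$ consists of commuting roots, $p$-nilpotent because each $x_\beta$ with $\beta \in \Phi^+$ is $p$-nilpotent, and of the right dimension $|R| = m(\Phi) = r_{\max}$ by the dimension-matching just recorded.

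The substantive work has already been done in \autoref{sec:unip} and in the conjugation statement \autoref{cor:conj}, so the proof of the corollary itself amounts to an assembly. The main obstacle is therefore not in any step of the argument but in its scope: in types $A_2$ and $G_2$ the classification of \autoref{thm:EuIdeal} does not apply, and indeed the stated equality fails there — for instance in type $A_2$ there is a third $G$-orbit through $\langle x_{\alpha_1}+x_{\alpha_2},\,x_{\alpha_1+\alpha_2}\rangle$ that is not conjugate to any $\lie(\Phir{i})$ — so my plan succeeds only under the tacit assumption that no simple factor of $G$ is of type $A_2$ or $G_2$, with the excluded cases treated by the separate analyses of the paper.
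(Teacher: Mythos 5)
Your proof is correct and takes essentially the same route as the paper, whose own proof consists of the single line that the corollary ``follows immediately from \autoref{thm:EuIdeal} and the corollary above'' (i.e.\ \autoref{cor:conj}); your version merely makes explicit the reduction to simple factors via \autoref{thm:Eprod}, the matching of $r_{\max}$ for $\fg$ and $\fu$, and the easy reverse inclusion. Your closing caveat is also well taken: since \autoref{thm:EuIdeal} excludes types $A_2$ and $G_2$ and the asserted equality genuinely fails there (in type $A_2$ the dense orbit through $\langle x_{\alpha_1}+x_{\alpha_2},\,x_{\alpha_1+\alpha_2}\rangle$ is not of the form $G\cdot\lie(R)$), the corollary as stated should carry the additional hypothesis that no simple factor of $G$ is of type $A_2$ or $G_2$ --- a restriction the paper only imposes later, in \autoref{thm:main}, where the corollary is actually used.
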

\begin{proof}
Follows immediately from \autoref{thm:EuIdeal} and the corollary above.
\end{proof}

\subsection{Ideal orbits}

Now we geometrically identify orbits of the form $G\cdot\lie(R)$ where $R \in \max(\Phi)$.  Observe that such $\cE = \lie(R)$ are fixed by $B$ under the adjoint action (see Malle and Testerman~\cite[15.4]{malletesterman}).  The stabilizer $P = \Stab_G(\cE)$ is then a standard parabolic and is generated by $B$ and representatives $\dot{s_i} \in N_G(T)$ of some collection of simple roots $s_i$ in the Weyl group $W = N_G(T)/C_G(T)$.  As $\dot s_i\lie(R) = \lie(s_iR)$ we have $\dot{s_i} \in P$ if and only if $s_i \in \Stab_W(R)$.  Thus $P$ is identified by the information in \autoref{table:stab} above.

The orbit map $\pi\colon G \to G\cdot\cE$ factors to a bijective morphism $G/P \to G\cdot\cE$ and we show below that this morphism is in fact an isomorphism. Note that the fact that the orbit map $\pi$ induces an isomorphism $G/P \cong G\cdot\cE$ is equivalent to $\pi$ being separable. This, in turn, is equivalent to the condition that the kernel of the tangent map at the identity $\ker d\pi_1$  is contained in $\lie(P)$ (see, for example, \cite[6.7]{borelLAG}).  We show that the latter holds in \autoref{thm:sep2}.

\begin{lemma}
\label{thm:sep}
If $\pi\colon G \to \bE(r, \fg)$ is the orbit map $g \mapsto g\cdot\mathcal E$ for some elementary subalgebra $\mathcal E \in \bE(r, \fg)$ then $\ker\mathrm{d}\pi_1 = N_{\fg}(\mathcal E)$.
\end{lemma}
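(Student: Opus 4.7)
My plan is to unwind the differential of the orbit map using the standard identification of the tangent space of the Grassmannian, then recognize the kernel condition as exactly the normalizer condition.

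First I would identify $T_1 G = \fg$ and use the standard identification $T_{\mathcal E}\,\Grass(r, \fg) \cong \Hom_k(\mathcal E, \fg/\mathcal E)$. Under this identification, a curve $\mathcal E_t$ in the Grassmannian with $\mathcal E_0 = \mathcal E$ and given locally by graphs of linear maps $\varphi_t\colon \mathcal E \to \fg/\mathcal E$ has tangent vector $\dot\varphi_0$.

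Next I would compute $d\pi_1$ explicitly. Since $\pi$ is the composition of the orbit map for the adjoint action $G \to \GL(\fg)$ with the morphism $\GL(\fg) \to \Grass(r, \fg)$ sending $g \mapsto g\cdot\mathcal E$, one obtains, for $x \in \fg$,
\[
d\pi_1(x)\colon \mathcal E \longrightarrow \fg/\mathcal E, \qquad e \longmapsto [x, e] + \mathcal E.
\]
This is the standard computation: writing $\exp(tx)\cdot e = e + t[x,e] + O(t^2)$ and projecting to $\fg/\mathcal E$ gives the claimed formula at $t = 0$.

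Finally, $d\pi_1(x) = 0$ as a map $\mathcal E \to \fg/\mathcal E$ if and only if $[x, e] \in \mathcal E$ for every $e \in \mathcal E$, which is exactly the condition $x \in N_{\fg}(\mathcal E)$. This gives the equality $\ker d\pi_1 = N_{\fg}(\mathcal E)$.

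There is no serious obstacle here; the only point requiring minor care is the identification of the tangent space to the Grassmannian and the verification that the derivative of the $\GL(\fg)$-action on $\Grass(r,\fg)$ is indeed given by the commutator modulo $\mathcal E$. Both are standard and work in arbitrary characteristic since they only use the chain rule and the definition of $\ad$.
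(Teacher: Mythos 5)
Your proposal is correct and follows essentially the same route as the paper: both identify $T_{\mathcal E}\Grass(r,\fg) \cong \Hom_k(\mathcal E, \fg/\mathcal E)$, compute $d\pi_1(x)$ as the map $e \mapsto [x,e] + \mathcal E$, and read off the kernel as the normalizer condition. The only cosmetic difference is that the paper establishes the tangent-space identification by factoring $\pi$ through the torsor of linearly independent $r$-tuples $(\fg^{\times r})^\circ \to \Grass(r,\fg)$ and trivializing locally, whereas you invoke the standard identification directly and differentiate the $\GL(\fg)$-action; as you note, the computation only uses the derivative of the adjoint representation (which is $\ad$ in any characteristic), so the $\exp(tx)$ shorthand is harmless.
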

\begin{proof}
Let $\underline e = (e_1, \ldots, e_r)$ be a basis of $\cE$.  As $\bE(r,\fg)$ is a closed subvariety in $\Grass(r, \fg)$ we may consider the latter to be the codomain of $\pi$.  This gives the following diagram
\begin{equation}
\label{eq:sep3}
\xymatrix@R=40pt{& & (\fg^{\times r})^\circ \ar[d]^\phi  \\ G \ar[urr]^-{\wt \pi}_-{g \mapsto g \cdot \underline e} \ar[rr]^-\pi_-{g \mapsto g\cdot\cE} && \Grass(r, \fg)}
\end{equation}
where $(\fg^{\times r})^\circ$ is the open subset of $\fg^{\times r}$ consisting of linearly independent $r$-tuples and $\phi\colon(\fg^{\times r})^\circ \to \Grass(r, \fg)$ is the canonical projection.  The map $\phi$ is, in particular, a $\GL_r$-torsor and hence is locally trivial.  Its tangent map at $\underline e$ can be identified as the linear map
\[\mathrm{d}\phi_{\underline e}: \Hom(\cE, \fg) \to \Hom(\cE, \fg/\cE)\]
induced by the projection $\fg \to \fg/\cE$.  Indeed, locally on the affine neighborhood defined by the non-vanishing of the Plucker coordinate associated with $\cE$, the torsor trivializes as follows:
\begin{equation*}
\label{eq:affine}
\xymatrix{\bA^{r(n-r)} \times \GL_r \ar[d]^-\phi \ar@{^(->}[r] & \bA^{rn} = \bM_{r\times n} \simeq \Hom_k(\cE, \fg)\ar[d] \\ \bA^{r(n-r)} \ar@{=}[r] & \Hom(\cE, \fg/\cE).}
\end{equation*}

Replace \hyperref[eq:sep3]{diagram~\ref*{eq:sep3}} with the corresponding diagram for tangent spaces:

\begin{equation*}
\label{eq:sep3tan}
\xymatrix@R=40pt{& & \fg^{\times r} \cong \Hom(\cE, \fg) \ar[d]^-{\mathrm{d}\phi_{\underline e}} \\ \fg \ar[urr]_-{\mathrm{d}\wt \pi_1}^-{g \mapsto ([g, \underline e])} \ar[rr]_-{\mathrm{d}\pi_1}&& \Hom(\cE, \fg/\cE).}
\end{equation*}
We are interested in $\ker\mathrm{d}\pi_1 = \ker(\mathrm{d}\phi_1 \circ \mathrm{d}\wt \pi_1)$. Note that in \hyperref[eq:sep3]{diagram~\ref*{eq:sep3}} the identification $\fg^{\times r} \cong \Hom(\cE, \fg)$ is given by sending an r-tuple $(g_1,\ldots, g_r)$ to the map $\cE \to \fg$ defined by $e_i \mapsto g_i$. Hence, the kernel of the vertical map is $\cE^{\times r} \subset \fg^{\times r}$. The map $\mathrm{d}\wt \pi_1$ is given by $g \mapsto ([g,e_1], \ldots, [g,e_r])$.  To land in $\ker\mathrm{d}\phi_1 = \cE^{\times r}$ we must have $[g,e_i] \in \cE$ for any $1\leq i \leq r$.  Hence, $\mathrm{d}\phi_1 \circ \mathrm{d}\wt \pi_1(g) = 0$ if and only if $g \in N_{\fg}(\cE)$.
\end{proof}

\begin{thm} \label{thm:sep2}
If $P = \Stab_G(\cE)$ is parabolic then the orbit $G\cdot\cE \subseteq \bE(\fg)$ is isomorphic to the flag variety $G/P$.
\end{thm}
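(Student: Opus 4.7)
The plan is to show that the natural $G$-equivariant bijective morphism $\bar{\pi}\colon G/P \to G\cdot \cE$, obtained by factoring the orbit map $\pi\colon G \to G\cdot\cE$ through $G/\Stab_G(\cE) = G/P$, is in fact an isomorphism. Since $G/P$ is smooth and projective, both source and target are smooth irreducible varieties of the same dimension, and $\bar\pi$ will be an isomorphism as soon as $\pi$ is separable, which by a standard criterion (see, e.g., Borel~\cite[6.7]{borelLAG}) amounts to the inclusion $\ker d\pi_1 \subseteq \lie(P)$. By \autoref{thm:sep} we have $\ker d\pi_1 = N_{\fg}(\cE)$, and since $P$ stabilizes $\cE$ the reverse inclusion $\lie(P)\subseteq N_{\fg}(\cE)$ is automatic. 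So the theorem reduces to proving the equality $N_{\fg}(\cE) = \lie(P)$.

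Because $P$ is parabolic we may conjugate so that $P$ is a standard parabolic containing $B$; then $\cE$ is $B$-stable, hence $T$-stable and $U$-stable, and a standard argument with the root space decomposition shows $\cE = \lie(R)$ for some ideal $R \in \max(\Phi)$. Now $N_{\fg}(\cE)$ is itself $T$-stable, so it decomposes as $\fh \oplus \bigoplus_{\alpha \in \Phi}(N_{\fg}(\cE) \cap \fg_\alpha)$. A direct Chevalley-basis calculation yields: $\fh \subseteq N_{\fg}(\cE)$ trivially; $\fg_\alpha \subseteq N_{\fg}(\cE)$ for every $\alpha\in \Phi^+$, using that $R$ is an ideal; and for $\alpha \in \Phi^+$, $\fg_{-\alpha} \subseteq N_{\fg}(\cE)$ if and only if $\alpha \notin R$ and $\beta - \alpha \in R$ for every $\beta \in R$ for which $\beta - \alpha$ is a root. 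Writing $\Stab_W(R)= W_J$ as recorded in \autoref{table:stab}, the equality $N_{\fg}(\cE) = \lie(P)$ then reduces to the combinatorial claim that $\{\alpha\in\Phi^+ : \fg_{-\alpha}\subseteq N_{\fg}(\cE)\} = \Phi_J^+$.

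For ideals of the form $R = \Phir{i}$ this claim is immediate: the complement $\Phi^+ \setminus \Phi_J^+$ coincides with $\Phir{i} = R$, so the condition $\alpha \notin R$ already fails for every $\alpha \in \Phi^+ \setminus \Phi_J^+$. The remaining cases are the exotic ideals $S_1$ in type $B_n$ ($n \geq 4$) and the unique non-$\Phir{i}$ ideals in $E_8$ and $F_4$; for each $\alpha \in \Phi^+ \setminus \Phi_J^+$ not already in $R$, one must exhibit an explicit $\beta \in R$ such that $\beta - \alpha$ is a positive root lying outside $R$. In $B_n$ with $R = S_1 = \{\epsilon_1\} \cup \{\epsilon_i + \epsilon_j : 1 \leq i < j \leq n\}$, for instance, the single choice $\beta = \epsilon_1$ handles both $\alpha = \epsilon_i$ ($i \geq 2$) and $\alpha = \epsilon_1 - \epsilon_j$ ($j \geq 2$), since $\beta - \alpha$ is a positive root outside $R$ in each case. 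The main obstacle is carrying this last verification out systematically in $E_8$ and $F_4$, where the non-$\Phir{i}$ ideal has a more intricate combinatorial description; the argument nevertheless remains elementary and can in principle be automated along the same lines as the computer-assisted computations used in the unipotent case of the previous section.
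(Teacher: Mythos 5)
Your reduction coincides with the paper's: both arguments combine Borel's criterion with \autoref{thm:sep} to reduce the theorem to the inclusion $N_{\fg}(\cE) \subseteq \lie(P)$, and both then take $P$ standard so that $\cE = \lie(R)$ is a Chevalley subalgebra. From there the routes diverge. You compute the $T$-stable normalizer root space by root space and compare it with $\lie(P_J)$ for $W_J = \Stab_W(R)$, which forces a case division over the ideals $R \in \max(\Phi)$; your criterion for $\fg_{-\alpha} \subseteq N_{\fg}(\cE)$ is correct (using that good $p$ kills no structure constants), and your verifications for $R = \Phir{i}$ and for $S_1$ in type $B_n$ check out. The paper instead argues uniformly for every $\alpha \in \Phi$ at once: since $p$ is good, hence at least the length of the longest root string, a structure constant that vanishes in $k$ already vanishes in $\bZ$, so $x_\alpha \in N_{\fg}(\cE)$ implies the $\bZ$-form of $x_\alpha$ normalizes the $\bZ$-form of $\cE$; exponentiating $\ad(x_\alpha)$ over $\bZ$ and base changing to $k$ shows that the root subgroup $U_\alpha$ stabilizes $\cE$, hence $U_\alpha \leq P = \Stab_G(\cE)$ and $x_\alpha \in \lie(P)$. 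This requires no knowledge of which sets of commuting roots are ideals and no stabilizer tables.

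The genuine gap in your write-up is that the combinatorial claim is asserted but not verified for the non-$\Phir{i}$ ideals in $E_8$ and $F_4$: for each $\alpha \in \Phi^+ \setminus (\Phi_J^+ \cup R)$ you must actually exhibit $\beta \in R$ with $\beta - \alpha \in \Phi \setminus R$, and until that finite check is done the proof is incomplete in exactly the two types where the ideal has the most intricate description. The check would surely succeed (the statement is true), but the paper's exponentiation argument shows that the entire case analysis --- including the parts you did carry out --- can be bypassed, so I would recommend replacing the combinatorial verification with that uniform argument rather than finishing it.
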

\begin{proof}
By Borel~\cite[6.7]{borelLAG} and the above lemma this is equivalent to the statement that $N_{\fg}(\cE) \subseteq \lie(P)$.  Without loss of generality we assume $P$ is a standard parabolic, then $\cE$ is fixed by the torus and therefore is a Chevalley subalgebra.  The normalizer $N_{\fg}(\cE)$ is then a Chevalley subalgebra as well so it suffices to choose $\alpha \in \Phi$ and show that $x_\alpha \in N_{\fg}(\cE)$ implies $x_\alpha \in \lie(P)$.

We assume that $p$ is separably good for $G$ so, in particular, $p$ is greater or equal to the length of the longest root string in $\Phi$.  This implies that any structure constants which are zero in $k$ are zero in $\bZ$.  We get then that the $\bZ$-form of $x_\alpha$ normalizes the $\bZ$-form of $\cE$.  The action of the root space $U_\alpha$ on $\fg$ is given by exponentiating the adjoint action of the $\bZ$-form of $x_\alpha$ and then base changing to $k$, thus $U_\alpha$ stabilizes $\cE$.  As $x_\alpha$ spans the Lie algebra of $U_\alpha$ we have $x_\alpha \in \lie(P)$ as desired.
\end{proof}

\begin{example}
\label{ex:G2}
To see how the above argument can fail when $p$ is less than the maximal length of a root string in $\Phi$ consider $G$ of type $G_2$ and $p = 3$.  We saw in \autoref{sec:G2p3} that $\cE = \lie(R_1)$ is a maximal elementary abelian.  The stabilizer of $\cE$ is the Borel $B$ but one can check that in $\fg_{\bZ}$ we have $[x_{-\alpha_1}, \cE] = 3\bZ x_{x_{\alpha_2}} + \cE$, thus $x_{-\alpha_1}$ normalizes $\cE$ in $\fg_{\bF_3}$.  If $M_0 \in \mathbb M_{14}(\mathbb Z)$ is the matrix of $\ad(x_{-\alpha_1})$ in $\fg_{\mathbb Z}$ and $M_3 \in \mathbb M_{14}(\mathbb F_3)$ is its mod $3$ image, i.e., the matrix of $\ad(x_{-\alpha_1})$ in $\fg_{\bF_3}$, then $M_0^3 \neq 0$ but $M_3^3 = 0$.  Thus even though $\exp(M_0)$ and $\exp(M_3)$ are both well defined and $\exp(M_3)$ stabilizes $\cE$, the action of $U_{-\alpha_1}$ is given by the mod $3$ image of $\exp(M_0)$ and this does not equal $\exp(M_3)$.
\end{example}

\subsection{Majority case}

Retaining the notation from the previous section we now consider $G$ which is \emph{not} of type $A_2$ or $G_2$. 

\begin{thm}
\label{thm:main}  
Let $G$ be a simple algebraic group, not of type $A_2$ or $G_2$.  Assume that $p$ is separably good for $G$.  
Then 
\[\bE(\fg) = \coprod_{\substack{R \in \max(\Phi) \\ R \ \mathrm{an \ ideal}}}G/P_{R},\]
where $P_{R} = \Stab_G(\lie(R))$.
\end{thm}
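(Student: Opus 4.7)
The proof will be a concise assembly of earlier results. By \autoref{thm:EgUnion}, $\bE(\fg)$ is a set-theoretic union of orbits $G \cdot \lie(R)$ as $R$ ranges over ideals in $\max(\Phi)$, so the remaining tasks are (i) to identify each such orbit with the asserted flag variety and (ii) to show that the union is in fact disjoint.

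For (i), I would first note that $\lie(R)$ is a Chevalley subalgebra and so is stable under $T$; since $R$ is a $\Phi^+$-ideal, for any $\alpha \in \Phi^+$ and $\beta \in R$ the bracket $[x_\alpha,x_\beta]$ is either zero or a scalar multiple of $x_{\alpha+\beta}$ with $\alpha+\beta \in R$, so $\lie(R)$ is also $U$-stable. Hence $P_R := \Stab_G(\lie(R))$ is a standard parabolic containing $B$. Because $p$ is separably good, \autoref{thm:sep2} applies and yields the isomorphism $G \cdot \lie(R) \cong G/P_R$. In particular each orbit is projective, hence closed in $\bE(\fg)$.

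For (ii), I would use the standard fact that two standard parabolics of $G$ are $G$-conjugate if and only if they coincide, so it suffices to see that the parabolics $P_R$ are pairwise distinct as $R$ ranges over the ideals in $\max(\Phi)$. By \autoref{table:stab} the subset of simple roots determining $P_R$ can be read off from $\Stab_W(R)$, and direct inspection shows that distinct ideals produce distinct such subsets: the ideals of the form $\Phir{i}$ correspond to distinct single nodes of the Dynkin diagram, while the only exceptional ideal appearing in the list (namely $S_1$ in type $B_n$ with $n \geq 4$) has stabilizer $W_{\Delta \setminus \{\alpha_1,\alpha_n\}}$, which is of strictly different Levi type from any $W_{\Delta \setminus \{\alpha_i\}}$. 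Therefore if $\lie(R') = g \cdot \lie(R)$ then $P_{R'} = gP_Rg^{-1}$ would force $R = R'$. Since each orbit is closed and they are pairwise disjoint, the set-theoretic union is scheme-theoretically the disjoint union $\coprod_R G/P_R$.

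The entire argument is essentially bookkeeping on top of the heavy results \autoref{thm:EgUnion} (conjugation into $\fu$) and \autoref{thm:sep2} (separability of the orbit map). The only genuinely new ingredient is the disjointness check, whose main (and very modest) obstacle is the case-by-case inspection of \autoref{table:stab} to rule out accidental parabolic coincidences.
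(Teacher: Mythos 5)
Your proposal is correct and follows the paper's proof almost exactly: both reduce to \autoref{thm:EgUnion} for the set-theoretic union over ideals and to \autoref{thm:sep2} to identify each orbit with $G/P_R$ and conclude closedness. The only divergence is the disjointness step. The paper argues via the Bruhat decomposition that $\lie(R)$ and $\lie(R')$ are $G$-conjugate only if $R$ and $R'$ are $W$-conjugate, and then invokes \autoref{lem:radConj} (no $\Phir{S}$ is $W$-conjugate to any other set of positive roots) together with the fact that at most one ideal fails to be of the form $\Phir{i}$. You instead pass to stabilizers: conjugate standard parabolics coincide, and \autoref{table:stab} shows the assignment $R \mapsto P_R$ is injective on ideals (the single exceptional ideal $S_1$ in type $B_n$ has a Levi of different type from any $P_{\Delta\setminus\{\alpha_i\}}$). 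Both routes are sound and rest on comparably standard facts; yours trades the root-combinatorial \autoref{lem:radConj} for the rigidity of parabolics plus a table check, and as a small bonus makes explicit why $P_R$ is a standard parabolic (the $B$-stability of $\lie(R)$ for $R$ an ideal, which the paper cites to Malle--Testerman).
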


\begin{proof}
From \autoref{thm:EgUnion} the variety $\bE(\fg)$ is a union of orbits $G\cdot\lie(R)$ where $R$ ranges over the ideals in $\max(\Phi)$ and \autoref{thm:sep2} gives that each orbit $G\cdot\lie(R)$ is isomorphic to the flag variety $G/P_{R}$.  These orbits are therefore closed and we need only prove that they are distinct.

By the Bruhat decomposition two such $\lie(R)$ are conjugate if and only if the corresponding $R$ are conjugate via the Weyl group.  At most one ideal in $\max(\Phi)$ is not of the form $\Phir{i}$ for some $i$ so \autoref{lem:radConj} gives that distinct maximal commuting ideals are non-conjugate as desired.
\end{proof}

Combining \autoref{thm:main} with \autoref{table:max} we get, except for types $A_2$ and $G_2$, the explicit type-by-type calculation of $\bE(\fg)$ found in \autoref{table:Eg}.

\begin{table}[ht]
\caption{$\bE(\fg)$ for $p$ separably good.}
\centering
\renewcommand{\arraystretch}{1.4}
\begin{tabular}[b]{|c|c|c|}
\hline
Type & \parbox{50pt}{\vspace{3pt}\centering Restrictions on rank\vspace{3pt}}  & $\bE(\fg)$ \\
\hline \hline
\multirow{2}{*}{$A_{2n}$} & $n = 1$ & \begin{tabular}{p{1.8in}} Irreducible, $5$-dimensional \end{tabular} \\
\cline{2-3}
& $n \geq 2$ & $G/P_{\Delta \setminus \set{\alpha_n}} \coprod G/P_{\Delta \setminus \set{\alpha_{n + 1}}}$ \\
\hline
$A_{2n+1}$ & $n \geq 0$ & $G/P_{\Delta \setminus \set{\alpha_{n + 1}}}$ \\
\hline
\multirow{3}{*}{$B_n$}& $n = 2,3$ & $G/P_{\Delta \setminus \set{\alpha_1}}$ \\
\cline{2-3}
& $n = 4$ & $G/P_{\Delta \setminus \set{\alpha_1}} \coprod G/P_{\set{\alpha_2, \alpha_3}}$ \\
\cline{2-3}
& $n \geq 5$ & $G/P_{\Delta \setminus \set{\alpha_1, \alpha_n}}$ \\
\hline
$C_n$ & $n \geq 3$ & $G/P_{\Delta \setminus \set{\alpha_n}}$ \\
\hline
\multirow{2}{*}{$D_n$} & $n = 4$ & $G/P_{\Delta \setminus \set{\alpha_1}} \coprod G/P_{\Delta \setminus \set{\alpha_3}} \coprod G/P_{\Delta \setminus \set{\alpha_4}}$ \\
\cline{2-3}
& $n \geq 5$ & $G/P_{\Delta \setminus \set{\alpha_{n - 1}}} \coprod G/P_{\Delta \setminus \set{\alpha_n}}$ \\
\hline
$E_6$ && $G/P_{\Delta \setminus \set{\alpha_1}} \coprod G/P_{\Delta \setminus \set{\alpha_6}}$ \\
\hline
$E_7$ && $G/P_{\Delta \setminus \set{\alpha_7}}$ \\
\hline
$E_8$ && $G/P_{\Delta \setminus \set{\alpha_2}}$ \\
\hline
$F_4$ && $G/P_{\set{\alpha_1, \alpha_3}}$ \\
\hline
$G_2$ && \begin{tabular}{p{1.8in}} Irreducible, $8$-dimensional \end{tabular} \\
\hline
\end{tabular}
\label{table:Eg}
\end{table}

\subsection{Type $A_2$}

In type $A_2$ there are $3$ disjoint orbits, two of which contain Chevalley subalgebras corresponding to ideals and one which does not contain a Chevalley subalgebra.  The following are representatives of the three orbits
\begin{align*}
L_1 &= \langle x_{\alpha_2}, x_{\alpha_1 + \alpha_2}\rangle, \\
L_2 &= \langle x_{\alpha_1}, x_{\alpha_1 + \alpha_2}\rangle, \\
L_3 &= \langle x_{\alpha_1} + x_{\alpha_2}, x_{\alpha_1 + \alpha_2}\rangle.
\end{align*}
The stabilizers of these subalgebras are
\begin{align*}
\Stab_G(L_1) &= P_1, \\
\Stab_G(L_2) &= P_2, \\
\Stab_G(L_3) &= \langle \alpha_1^\vee(\lambda_1)\alpha_2^\vee(\lambda_2), U \ | \ \lambda_1^3 = \lambda_2^3 \rangle,
\end{align*}
and so the orbits have dimensions
\begin{align*}
\dim(G\cdot L_1) &= 2, \\
\dim(G\cdot L_2) &= 2, \\
\dim(G\cdot L_3) &= 5.
\end{align*}
As $G\cdot L_1 \simeq G/P_1$ and $G\cdot L_2 \simeq G/P_2$ they are closed orbits and $G\cdot L_3$ is open.  The map $\mathbb P^1 \to \bE(\fg)$ given by $[a : b] \mapsto \langle ax_{\alpha_1} + bx_{\alpha_2}, x_{\alpha_1 + \alpha_2}\rangle$ then yields that the closure of $G\cdot L_3$ contains the other two orbits.  So $G\cdot L_3$ is dense, hence $\bE(\fg)$ is irreducible of dimension $5$.

\subsection{Type $G_2$}

In type $G_2$ there are again $3$ disjoint orbits, two of which contain Chevalley subalgebras and one which does not, but now only one orbit contains a Chevalley subalgebra corresponding to an ideal.  The following are representatives of the three orbits
\begin{align*}
L &= \langle x_{\alpha_2} + x_{3\alpha_1 + \alpha_2}, x_{2\alpha_1 + \alpha_2}, x_{3\alpha_1 + 2\alpha_2}\rangle \\
\lie(C_3) &= \langle x_{\alpha_2}, x_{2\alpha_1 + \alpha_2}, x_{3\alpha_1 + 2\alpha_2}\rangle, \\
\lie(C_5) &= \langle x_{2\alpha_1 + \alpha_2}, x_{3\alpha_1 + \alpha_2}, x_{3\alpha_1 + 2\alpha_2}\rangle,
\end{align*}
and $C_5$ is the ideal.  The stabilizers of these subalgebras are
\begin{align*}
\Stab_G(L) &= \langle\alpha_1^\vee(\lambda_1)\alpha_2^\vee(\lambda_2), U_\alpha \ | \ \lambda_1^2 = \lambda_2^3, \ \alpha \in \Phi^+ \setminus \set{\alpha_1}\rangle, \\
\Stab_G(\lie(C_3)) &= \langle T, U_\alpha \ | \ \alpha \in \Phi^+ \setminus \set{\alpha_1}\rangle, \\
\Stab_G(\lie(C_5)) &= P_2,
\end{align*}
so the orbits have dimensions
\begin{align*}
\dim(G\cdot L) &= 8, \\
\dim(G\cdot\lie(C_3)) &= 7, \\
\dim(G\cdot\lie(C_5)) &= 5.
\end{align*}
As $\bE(\fg)$ is a closed subvariety of the Grassmannian it is complete.  The Borel fixed point theorem then gives that $G\cdot\lie(C_5)$ is the only closed orbit.  Boundaries of orbits are unions of orbits of smaller dimension therefore the closure of $G\cdot\lie(C_3)$ is its union with $G\cdot\lie(C_5)$ and the orbit $G\cdot L$ is open.  To see that $G\cdot\lie(C_3)$ is not open we need that the closure of $G\cdot L$ contains $G\cdot\lie(C_3)$; one sees this from the map $\mathbb A^1 \to \bE(\fg)$ defined by $a \mapsto \langle x_{\alpha_2} + ax_{3\alpha_1 + \alpha_2}, x_{2\alpha_1 + \alpha_2}, x_{3\alpha_1 + 2\alpha_2}\rangle$ which sends $\mathbb A^1 \setminus \set{0}$ into $G\cdot L$ and $0$ to $\lie(C_3)$.  In particular, we have now shown that $G\cdot L$ is dense and so $\bE(\fg)$ is irreducible of dimension $8$.


\section{Applications to Chevalley groups}
\label{sec:groups}

Let $k = \overline{\bF}_p$ be the algebraic closure of $\bF_p$ and let $G$ be a reductive $k$-group, defined and split over $\bZ$ as above.  
Our calculation of $\bE(\fg)$ yields information about the maximal elementary abelian $p$-subgroups of the Chevalley groups $G(\bF_{q})$ for $q=p^r$ and their conjugacy classes.
Since any such subgroup can be conjugated into the Sylow $p$-subgroup $U(\bF_q)$, we can assume that $G$ is semi-simple simply connected. Assume $p$ is good for $G$, and let $\cU_1(G)$ and $\cN_1(\fg)$ be the varieties of $p$-unipotent and $p$-nilpotent elements in $G$ and $\fg$ respectively.

To translate between the Lie algebra and Chevalley group we use the theorem below.  

\begin{thm}[{\cite[4.3]{sobajeSpringerIsos}}] \label{thm:Springer}
There exists a unique isomorphism $\phi\colon\cN_1(\fg) \to \cU_1(G)$ satisfying
\begin{enumerate}
\item $\phi$ is $G$-equivariant,
\item $x, y \in \fg$ commute if and only if $\phi(x), \phi(y) \in G$ commute,
\item $x \in \fg_{\bF_q}$ if and only if $\phi(x) \in G(\bF_q)$.
\end{enumerate}
\end{thm}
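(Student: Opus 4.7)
The plan is to construct $\phi$ in stages, leveraging the interplay between the $p$-nilpotent and $p$-unipotent structures available under the good prime hypothesis. The starting point is the existence of \emph{some} $G$-equivariant isomorphism $\cN_1(\fg) \simeq \cU_1(G)$; this is classical (Springer, Bardsley--Richardson, Serre) and can be obtained as a $G$-equivariant section of the adjoint quotient map $\fg \to \fg/\!/G \simeq G/\!/G \leftarrow G$. In type $A$ an explicit formula is available: the truncated exponential $x \mapsto \sum_{i=0}^{p-1} x^i/i!$ lands in $\GL_n$ because $x^p = 0$ and the denominators $i!$ with $i<p$ are units. This settles property (1).

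The main obstacle is (2), preservation of commutation: a $G$-equivariant $\phi$ automatically preserves centralizers, but not the finer property that a pair $(x, y)$ of commuting nilpotents has commuting image. The approach I would take is type by type. For classical $\fg$, realize it as the fixed points of an involution $\sigma$ on some $\gl_N$. The truncated exponential, being a polynomial with $\bZ_{(p)}$-coefficients, commutes with $\sigma$ and hence restricts to a well-defined map $\cN_1(\fg) \to \cU_1(G)$. Commutation is then inherited from $\gl_N$: if $x, y \in \gl_N$ commute, then their truncated exponentials are polynomials in commuting elements and hence commute. Exceptional types are where the heart of the difficulty lies and where Sobaje's input is essential: one must either go through a minuscule-type embedding into a classical group or perform an explicit root-datum-level construction, then verify the commutation property by hand in each type.

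Property (3) follows because every step of the construction is defined over $\bZ$: the truncated exponential has $\bZ_{(p)}$-coefficients and the involutions used to cut out classical types are $\bZ$-rational, so $\phi$ commutes with the Frobenius and restricts to a bijection on $\bF_q$-points. Finally, for uniqueness, given two candidates $\phi$ and $\phi'$, the composite $\psi = \phi^{-1}\circ\phi'$ is a $G$-equivariant, commutation-preserving automorphism of $\cN_1(\fg)$. By $G$-equivariance $\psi$ must act as a scalar on each root-vector line $k\, x_\alpha$; applying commutation-preservation to sums $a x_\alpha + b x_\beta$ supported on commuting pairs of roots forces the scalars along all roots to coincide, and a normalization on the regular nilpotent orbit pins the common value to $1$. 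Hence $\psi = \id$, and the asserted isomorphism is unique. The hard part is clearly the commutation-preservation in exceptional types; everything else is bookkeeping around a standard Springer-type construction.
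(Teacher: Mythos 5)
The first thing to say is that the paper offers no proof of this statement at all: it is imported wholesale from Sobaje \cite[4.3]{sobajeSpringerIsos}, so there is no internal argument to compare your sketch against, and I am judging it on its own terms. The most serious gap is the one you concede: the exceptional types are simply not handled, and ``Sobaje's input is essential'' is not a proof step. Since the theorem is invoked in this paper precisely to cover all types (including $E_8$, $F_4$, and $G_2$), the classical-type argument via truncated exponentials inside $\gl_N$ does not establish the statement. Two smaller points in the existence part also need attention: (i) a $G$-equivariant section of the adjoint quotient gives an isomorphism $\cN \to \cU$ of the full nilpotent and unipotent varieties, and you must still check that it restricts to $\cN_1(\fg) \to \cU_1(G)$, i.e.\ sends elements with $x^{[p]}=0$ to unipotents of order dividing $p$ --- clear for the truncated exponential, not automatic for an abstract Springer isomorphism; (ii) the biconditional in (2) is fine for your $\gl_N$ model because the truncated logarithm inverts the truncated exponential, so that direction does work.

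The uniqueness argument is broken. Granting that $T$-equivariance forces $\psi = \phi^{-1}\circ\phi'$ to act by a scalar $a_\alpha$ on each root line, this determines $\psi$ only on the $G$-orbits of root vectors, which is a small closed subset of $\cN_1(\fg)$; you cannot conclude $\psi = \id$ everywhere from its restriction there, and since $\psi$ is a priori only a morphism of varieties (not a linear map) the computation you propose on $a x_\alpha + b x_\beta$ is not available. More fundamentally, the ``normalization on the regular nilpotent orbit'' you invoke is not among hypotheses (1)--(3), and without it uniqueness genuinely fails: if $\phi$ satisfies (1)--(3), then so does $x \mapsto \phi(\lambda x)$ for any $\lambda \in \bF_q^\times$, because $\cN_1(\fg)$ is a cone, scaling commutes with $\Ad$, preserves the commutation relation, and preserves $\bF_q$-rationality. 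So either the statement as quoted abbreviates Sobaje's actual characterization (which carries an additional normalization, e.g.\ on root subalgebras) or it needs one; in either case your uniqueness proof silently assumes exactly the hypothesis that is missing, which is the part that would have to be supplied.
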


Such a $\phi$ gives an inclusion preserving bijection between $p$-nilpotent commutative subsets of $\fg_{\bF_q}$ and $p$-unipotent commutative subsets of $G(\bF_q)$.  A maximal commuting set of $p$-unipotent elements in $G(\bF_q)$ is necessarily a maximal elementary abelian subgroup.  Similarly a maximal set of commuting $p$-nilpotent elements in $\fg_{\bF_q}$ is necessarily a maximal elementary subalgebra, and therefore corresponds to a maximal elementary subalgebra of $\fg$ that is defined over $\bF_q$.  As the $\bF_q$-rational points of the Grassmannian are exactly the subspaces defined over $\bF_q$ we now have the following.

\begin{thm}
\label{thm:conjFq}
The map $\phi$ induces a $G(\bF_q)$-equivariant bijection between the $\bF_q$-rational points of $\bE(\fg)$ and the maximal elementary abelian subgroups of $G(\bF_q)$.
\end{thm}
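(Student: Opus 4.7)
The plan is to define the bijection by $\cE \mapsto \phi(\cE(\bF_q))$, where $\cE(\bF_q) := \cE \cap \fg_{\bF_q}$, with inverse $A \mapsto \spn_k \phi^{-1}(A)$. An $\bF_q$-rational point of the Grassmannian is by definition a subspace defined over $\bF_q$, so an $\bF_q$-point of $\bE(\fg)$ is a maximal elementary subalgebra $\cE \subseteq \fg$ with $\cE = \spn_k \cE(\bF_q)$. Properties (2) and (3) of \autoref{thm:Springer} then imply that $\phi$ carries $\cE(\bF_q)$ to a commuting set of $p$-unipotent elements in $G(\bF_q)$.

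The key step is to verify that this image is actually a subgroup. Let $A \leq G(\bF_q)$ be the subgroup generated by $\phi(\cE(\bF_q))$; since its generators are pairwise commuting and $p$-unipotent, $A$ is itself an elementary abelian $p$-group all of whose elements commute and are $p$-unipotent. The inclusion-preserving bijection induced by $\phi$ between commuting $p$-nilpotent subsets of $\fg_{\bF_q}$ and commuting $p$-unipotent subsets of $G(\bF_q)$ therefore forces $\phi^{-1}(A)$ to be a commuting $p$-nilpotent subset of $\fg_{\bF_q}$ containing $\cE(\bF_q)$. Its $k$-span is an elementary subalgebra, since Jacobson's formula ensures that the $p$-restriction remains trivial on sums of commuting $p$-nilpotent elements, and it contains $\cE$, hence equals $\cE$ by maximality. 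Consequently $\phi^{-1}(A) \subseteq \cE(\bF_q)$ and $A = \phi(\cE(\bF_q))$; running the same argument in reverse shows that $A$ is maximal among elementary abelian subgroups of $G(\bF_q)$.

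The inverse is constructed symmetrically: given a maximal elementary abelian $p$-subgroup $A \leq G(\bF_q)$, the preimage $\phi^{-1}(A)$ is a commuting $p$-nilpotent subset of $\fg_{\bF_q}$ whose $k$-span $\cE_A$ is an elementary subalgebra of $\fg$ visibly defined over $\bF_q$, and the same maximality argument places $\cE_A$ in $\bE(\fg)(\bF_q)$. The two assignments are mutually inverse because $\cE = \spn_k \cE(\bF_q)$ whenever $\cE$ is defined over $\bF_q$ and because $\phi$ is bijective. Finally, $G(\bF_q)$-equivariance follows from the $G$-equivariance of $\phi$ (property (1) of \autoref{thm:Springer}) together with the fact that the $G(\bF_q)$-action commutes with passage to $\bF_q$-points.

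The main technical obstacle is translating between the $k$-linear subspace structure on the Lie algebra side and the subgroup structure on $G(\bF_q)$, since $\phi$ is only a $G$-equivariant isomorphism of varieties and is not a homomorphism of group schemes. This is exactly where the generation-and-maximality argument of the second paragraph is needed: without it, $\phi(\cE(\bF_q))$ is only a commuting $p$-unipotent subset and not manifestly closed under multiplication.
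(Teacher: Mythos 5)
Your proposal is correct and follows essentially the same route as the paper: the paper likewise derives the bijection from the inclusion-preserving correspondence between commuting $p$-nilpotent subsets of $\fg_{\bF_q}$ and commuting $p$-unipotent subsets of $G(\bF_q)$, noting that maximality forces such subsets to be subgroups on one side and ($\bF_q$-forms of) elementary subalgebras on the other, and identifying $\bF_q$-points of the Grassmannian with subspaces defined over $\bF_q$. Your write-up merely makes explicit the generation-and-maximality step and the Jacobson-formula detail that the paper leaves implicit.
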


Even though we don't need this observation for our application to Chevalley groups, the following enhancement of \autoref{cor:conj} is worthy of pointing out:

\begin{cor}
Let $\cE$ be an $\bF_q$-rational point of $\bE(\fg)$.  Then $\cE$ is $G(\bF_q)$-conjugate to a subalgebra of $\fu$.
\end{cor}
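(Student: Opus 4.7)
The plan is to combine \autoref{thm:conjFq} with Sylow's theorem for the finite group $G(\bF_q)$, reducing the question to standard finite group theory.

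First, by \autoref{thm:conjFq}, the $\bF_q$-rational point $\cE$ corresponds, via the Springer isomorphism $\phi$, to a maximal elementary abelian $p$-subgroup $E = \phi(\cE(\bF_q)) \leq G(\bF_q)$, and this correspondence is $G(\bF_q)$-equivariant. Hence it suffices to exhibit $g \in G(\bF_q)$ with $gEg^{-1} \subseteq U(\bF_q)$ and to transport this containment back across $\phi$ to an inclusion $g \cdot \cE \subseteq \fu$.

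Second, I would verify that $U(\bF_q)$ is a Sylow $p$-subgroup of $G(\bF_q)$. Since $G$ is split connected reductive over $\bZ$, the Bruhat decomposition yields
\[
|G(\bF_q)| \ =\ q^N \cdot |T(\bF_q)| \cdot \sum_{w \in W} q^{\ell(w)},
\]
where $N = |\Phi^+| = \dim U$. The factor $|T(\bF_q)|$ is coprime to $p$ because $T$ is a split torus, and the Poincar\'e polynomial of $W$ has constant term $1$, hence is also coprime to $p$; therefore the $p$-part of $|G(\bF_q)|$ equals $q^N = |U(\bF_q)|$. Sylow's theorem applied to the $p$-subgroup $E$ then produces $g \in G(\bF_q)$ with $gEg^{-1} \subseteq U(\bF_q)$.

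Finally, I would translate back. By the $G(\bF_q)$-equivariance of $\phi$, the subgroup $gEg^{-1}$ is $\phi((g\cdot\cE)(\bF_q))$, and by the $B$-equivariance of Sobaje's Springer isomorphism (which restricts $\phi$ to a bijection between $\fu \cap \cN_1(\fg)$ and $U \cap \cU_1(G)$) the containment $gEg^{-1} \subseteq U(\bF_q) \cap \cU_1(G)$ pulls back to $(g \cdot \cE)(\bF_q) \subseteq \fu_{\bF_q}$. Since $g \cdot \cE$ is $\bF_q$-rational (as $g \in G(\bF_q)$ and $\cE$ is), it is determined by its $\bF_q$-points, so $g \cdot \cE \subseteq \fu$ as required. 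The only mildly delicate input beyond the tools already in hand is the Borel-compatibility of $\phi$, which is built into Sobaje's construction and poses no serious obstacle.
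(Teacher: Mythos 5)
Your proposal is correct and follows essentially the same route as the paper: identify $\cE$ with a maximal elementary abelian $p$-subgroup via \autoref{thm:conjFq}, conjugate it into the Sylow $p$-subgroup $U(\bF_q)$, and pull back through the equivariance of $\phi$. The paper simply asserts the Sylow property of $U(\bF_q)$ and the compatibility of $\phi$ with $\fu \leftrightarrow U$ without comment, whereas you spell both out; these additions are accurate but not a different argument.
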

\begin{proof}
The subalgebra $\cE$ corresponds, using $\phi$, to a maximal elementary abelian subgroup of $G(\bF_q)$.  As $U(\bF_q)$ is a $p$-Sylow subgroup of $G(\bF_q)$ an element $g \in G(\bF_q)$ conjugates this elementary abelian into $U(\bF_q)$.  The equivariance of $\phi$ then gives that $g$ conjugates $\cE$ into $\fu$.
\end{proof}

\hyperref[thm:EuIdeal]{Theorem~\ref*{thm:EuIdeal}} together with the calculation for $A_2$ in  \autoref{sec:unip} show that in all types except for $G_2$ if two elementary subalgebras in $\bE(\fu)$ are defined over $\bF_q$ and conjugate by an element in $G(k)$ then they are already conjugate by an element in $G(\bF_q)$.  
This observation, combined with \autoref{thm:conjFq}, allows us to translate the results for $\bE(\fu)$ from \autoref{sec:unip} to the classification of conjugacy classes of maximal elementary abelian $p$-subgroups of $G(\bF_q)$. Consequently, we recover the results of Barry \cite{barryLargeAbelianSubgroups} on maximal elementary abelian $p$-subgroups of Chevalley groups of classical type, and supplement Barry's results with similar information for the exceptional types.

\begin{thm}
Let $G$ be a simple algebraic group defined and split over $\bZ$. Assume $p$ is good for $G$.
Then the conjugacy classes and ranks of the elementary abelian $p$-subgroups of $G(\mathbb F_q)$ of maximal rank are given in \autoref{table:Chevalley}, where in types $E_8$ and $F_4$ we take $R$ to be the unique ideal in $\max(\Phi)$.
\end{thm}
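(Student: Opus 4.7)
The plan is to combine the Springer isomorphism of \autoref{thm:Springer} with the unipotent classification of \autoref{sec:unip}, keeping careful track of fields of definition.

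First, by \autoref{thm:conjFq} the inverse Springer map $\phi^{-1}$ provides a $G(\mathbb F_q)$-equivariant bijection between maximal elementary abelian $p$-subgroups of $G(\mathbb F_q)$ and $\mathbb F_q$-rational points of $\bE(\fg)$; because $\phi$ preserves commutativity and rank, each such subgroup has rank $m(\Phi)$, which can be read off from \autoref{table:max}. It therefore suffices to enumerate the $G(\mathbb F_q)$-orbits on $\bE(\fg)(\mathbb F_q)$.

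Next, the corollary preceding the statement reduces the problem to the $\mathbb F_q$-points of $\bE(\fu)$, since any maximal elementary abelian in $G(\mathbb F_q)$ can be $G(\mathbb F_q)$-conjugated into the Sylow $U(\mathbb F_q)$. The $F$-refined conclusion of \autoref{thm:EuIdeal} (together with \autoref{Prop:Bconj} in type $B_n$ and the $F$-rational statement of the Magma verification in types $E_6$, $E_8$, $F_4$) then shows that each $\mathbb F_q$-point of $\bE(\fu)$ is $G(\mathbb F_q)$-conjugate to $\lie(R)$ for some ideal $R \in \max(\Phi)$; in the exceptional types $A_2$ and $G_2$ the three orbit representatives computed explicitly in \autoref{sec:reduc} all have $\mathbb F_q$-rational Chevalley-basis form, so an analogous reduction applies by inspection.

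To separate the resulting representatives, observe that each $\lie(R)$ is defined over $\mathbb F_p$; if $\lie(R_1)$ and $\lie(R_2)$ were $G(\mathbb F_q)$-conjugate then they would be $G(k)$-conjugate, and the Bruhat decomposition together with \autoref{lem:radConj} would force $R_1 = R_2$ (with the normalizer comparison recorded in \autoref{sec:reduc} handling $A_2$ and $G_2$). Applying $\phi^{-1}$ and reading entries from \autoref{table:max} and \autoref{table:Eg} produces \autoref{table:Chevalley}.

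The main obstacle will be the $\mathbb F_q$-rationality bookkeeping in the second step. The arguments of \autoref{sec:unip} exhibit each conjugating element as a product of root-group elements $\exp(\ad(c\,x_\alpha))$ and coroot evaluations whose parameters are rational (in $G_2$, algebraic) in the structure constants of the input subalgebra; these parameters are automatically $\mathbb F_q$-rational when the input is, but verifying this uniformly across all types—and in particular certifying the $F$-rational output of the Magma verification for the exceptional types—is the technically most delicate piece of the argument.
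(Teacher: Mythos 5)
Your proposal is correct and follows essentially the same route as the paper: the paper's own justification is precisely the combination of \autoref{thm:conjFq} (the Springer-isomorphism translation), the Sylow reduction into $U(\bF_q)$, the $F$-rational refinements of the unipotent classification in \autoref{sec:unip} (including \autoref{Prop:Bconj} and the Magma-verified exceptional types), and the non-conjugacy of distinct ideals via \autoref{lem:radConj}. The rationality bookkeeping you flag as delicate is exactly the content the paper front-loads into the ``$F$-points'' clauses of \autoref{thm:EuIdeal} and its predecessors, so no new argument is needed at this stage.
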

Note that for $G$ reductive, the representatives of conjugacy classes of maximal elementary abelian $p$-subgroups of $G(\bF_q)$ are given by products of representatives for each simple entry in the direct product decomposition of the derived group $[G,G]$. 
\renewcommand{\arraystretch}{1.5}

\begin{table}[ht]
\caption{Maximal elementary abelian subgroups of $G(\mathbb F_q)$.}
\centering
\begin{tabular}[b]{|c|c|c|c|c|}
\hline
Type & \parbox{50pt}{\vspace{3pt}\centering Restrictions on rank\vspace{3pt}} & \parbox{55pt}{\vspace{3pt}\centering \# conjugacy classes\vspace{3pt}} & order & Representatives \\
\hline \hline
\multirow{3}{*}{$A_{2n}$} & $n = 1$ & $3$ & $q^2$ & $\begin{array}{c} \langle U_{\alpha_1}, U_{\alpha_1+\alpha_2} \rangle \\ \langle U_{\alpha_2}, U_{\alpha_1+\alpha_2} \rangle \\ \langle \phi(\bF_q(x_{\alpha_1} + x_{\alpha_2})), U_{\alpha_1 + \alpha_2} \rangle  \end{array}$ \\
\cline{2-5}
& $n \geq 2$ &  $2$ & $q^{n(n+1)}$ & $\begin{array}{c} \langle U_\alpha(\bF_q) \ | \ \alpha \in \Phir{n+1}\rangle \\ \langle U_\alpha(\bF_q) \ | \ \alpha \in \Phir{n}\rangle \end{array}$ \\
\hline
$A_{2n+1}$ & $n \geq 0$ &  $1$ & $q^{(n+1)^2}$ & $\langle U_\alpha(\bF_q) \ | \ \alpha \in \Phir{n+1}\rangle$\\
\hline
\multirow{4}{*}{$B_n$}& $n = 2,3$ &  $1$ & $q^{2n-1}$ & $\langle U_\alpha(\bF_q) \ | \ \alpha \in \Phir{1}\rangle$ \\
\cline{2-5}
& $n = 4$ & $2$ & $q^7$& 
$\begin{array}{c} \langle U_\alpha(\bF_q) \ | \ \alpha \in \Phir{1}\rangle \\ \langle U_\alpha(\bF_q) \ | \ \alpha \in S_1\rangle \end{array}$\\
\cline{2-5}
& $n \geq 5$ & $1$ & $q^{\frac{1}{2}n(n-1)+1}$& $\langle U_\alpha(\bF_q) \ | \ \alpha \in S_1\rangle$\\
\hline
$C_n$ & $n \geq 2$  & $1$ & $q^{\frac{1}{2}n(n+1)}$ & $\langle U_\alpha(\bF_q) \ | \ \alpha \in \Phir{n}\rangle$\\
\hline
\multirow{3}{*}{$D_n$}& $n=4$ & $3$ & $q^6$ &
$\begin{array}{c} \langle U_\alpha(\bF_q) \ | \ \alpha \in \Phir{1}\rangle \\ \langle U_\alpha(\bF_q) \ | \ \alpha \in \Phir{3}\rangle \\ \langle U_\alpha(\bF_q) \ | \ \alpha \in \Phir{4}\rangle \end{array}$\\
\cline{2-5}
& $n \geq 5$ & $2$ & $q^{\frac{1}{2}n(n-1)}$& 
$\begin{array}{c} \langle U_\alpha(\bF_q) \ | \ \alpha \in \Phir{n-1}\rangle \\ \langle U_\alpha(\bF_q) \ | \ \alpha \in \Phir{n}\rangle \end{array}$ \\
\hline
$E_6$ && $2$ & $q^{16}$ & $\begin{array}{l} \langle U_\alpha(\bF_q) \ | \ \alpha \in \Phir{1}\rangle \\ \langle U_\alpha(\bF_q) \ | \ \alpha \in \Phir{6}\rangle \end{array}$ \\
\hline
$E_7$ && $1$ & $q^{27}$ & $\langle U_\alpha(\bF_q) \ | \ \alpha \in \Phir{7}\rangle$ \\
\hline
$E_8$ && $1$ & $q^{36}$ & $\langle U_\alpha(\bF_q) \ | \ \alpha \in I\rangle$ \\
\hline
$F_4$ && $1$ & $q^9$ & $\langle U_\alpha(\bF_q) \ | \ \alpha \in I\rangle$ \\
\hline
$G_2$ && $\geq 3$ & $q^3$ & \\
\hline
\end{tabular}
\label{table:Chevalley}
\end{table}

\begin{example} In type $G_2$ we used the fact that $k$ is algebraically closed in a nontrivial way in \autoref{sec:G2unip} to conclude that $\bE(\fg)$ had three $G$-orbits.  Thus we can only conclude that there are \emph{at least} three conjugacy classes of maximal elementary abelian subgroups in $G(\bF_q)$.  In fact the number of conjugacy classes depends on $q$.  For example, if $q = 5^r$ then $G(\bF_q)$ has three conjugacy classes of elementary abelian subgroups when $r$ is odd but has six such conjugacy classes when $r$ is even.
\end{example}

As implied by the Quillen stratification theorem~\cite{quillen} the above calculation gives the number of irreducible components of maximal dimension for $\Spec H^*(G(\bF_q), k)$.

\begin{cor}
Let $G$ be a simple algebraic group defined and split over $\bZ$. Assume $p$ is good for $G$. Then the dimension of $\Spec H^*(G(\bF_q), k)$ and the number of irreducible components of maximal dimension are given in \autoref{table:spec}. 
\begin{table}[ht]
\caption{ $\Spec H^*(G(\mathbb F_{p^r}), k)$.}
\centering
\begin{tabular}[b]{|c|c|c|c|}
\hline
\rm Type & \parbox{50pt}{\vspace{3pt}\centering\rm Restrictions on rank\vspace{3pt}}  & \parbox{70pt}{\vspace{3pt}\centering\rm \# of irreducible components of max dimension\vspace{3pt}} & \rm dimension \\
\hline \hline
\multirow{2}{*}{$A_{2n}$} & $n = 1$ & $3$ & $p^{2r-1}$ \\
\cline{2-4}
& $n \geq 2$ &  $2$ & $p^{rn(n+1)-1}$ \\
\hline
$A_{2n+1}$ & $n \geq 0$ & $1$ & $p^{r(n+1)^2-1}$ \\
\hline
\multirow{3}{*}{$B_n$}& $n=2,3$ &  $1$ & $p^{r(2n-1)-1}$ \\
\cline{2-4}
& $n = 4$ & $1$ & $p^{7r-1}$ \\
\cline{2-4}
& $n \geq 4$ & $1$ & $p^{\frac{1}{2}rn(n-1)+r-1}$ \\
\hline
$C_n$ & $n \geq 2$ & $1$ & $p^{\frac{1}{2}n(n+1)r-1}$ \\
\hline
\multirow{2}{*}{$D_n$}& $n=4$ & $3$ & $p^{6r-1}$ \\
\cline{2-4}
& $n \geq 5$ & $2$ & $p^{\frac{1}{2}n(n-1)r-1}$ \\
\hline
$E_6$ && $2$ & $p^{16r-1}$ \\
\hline
$E_7$ && $1$ & $p^{27r-1}$ \\
\hline
$E_8$ && $1$ & $p^{36r-1}$ \\
\hline
$F_4$ && $1$ & $p^{9r-1}$ \\
\hline
$G_2$ && $\geq 3$ & $p^{3r-1}$ \\
\hline
\end{tabular}
\label{table:spec}
\end{table}
\end{cor}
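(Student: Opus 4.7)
The plan is to apply Quillen's stratification theorem~\cite{quillen} to the classification of maximal elementary abelian $p$-subgroups of $G(\bF_q)$ established in the preceding theorem. Quillen shows that for any finite group $\Gamma$, the irreducible components of $\Spec H^*(\Gamma, k)$ are in bijection with the conjugacy classes of maximal elementary abelian $p$-subgroups of $\Gamma$, and that the dimension of the component indexed by $E$ equals the $\bF_p$-rank of $E$. Consequently, both outputs of the corollary are determined once we know (i) the $\bF_p$-rank of a maximal elementary abelian $p$-subgroup of $G(\bF_q)$, and (ii) how many $G(\bF_q)$-conjugacy classes of them there are.

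For (i), every representative listed in \autoref{table:Chevalley} is generated (up to the small exceptional modification in $A_2$) by root subgroups $U_\alpha(\bF_q)$ for $\alpha$ running over a set of cardinality $m(\Phi)$. Since each $U_\alpha(\bF_q) \cong \bF_q$ is elementary abelian of $\bF_p$-rank $r$, the total $\bF_p$-rank of any maximal elementary abelian is $r\cdot m(\Phi)$. This is by Quillen the Krull dimension of $\Spec H^*(G(\bF_q), k)$, and reading off $m(\Phi)$ from \autoref{table:max} type-by-type reproduces the dimension column of \autoref{table:spec}. For (ii), the number of top-dimensional components is imported directly from the conjugacy class count of \autoref{table:Chevalley}.

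The one genuine obstacle is type $G_2$. The orbit classification in \autoref{sec:G2unip} used algebraic closedness of $k$ in a nontrivial way (e.g.\ extracting cube roots to normalize parameters), so the three $G(k)$-orbits in $\bE(\fg)$ need not split into exactly three $G(\bF_q)$-orbits after descent; only the bound of $3$ conjugacy classes in $G(\bF_q)$ descends, giving the ``$\geq 3$'' entry. No other case has this issue, because outside of $G_2$ the remark following \autoref{thm:conjFq}, based on \autoref{thm:EuIdeal}, shows that two $\bF_q$-rational elementary subalgebras of $\fu$ that are $G(k)$-conjugate are already $G(\bF_q)$-conjugate, so the count of $G(\bF_q)$-orbits agrees with the count of $G(k)$-orbits computed earlier.
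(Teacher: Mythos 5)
Your proposal is correct and follows the paper's own proof essentially verbatim: both invoke Quillen's stratification theorem to identify the Krull dimension of $\Spec H^*(G(\bF_q),k)$ with the rank of a maximal elementary abelian $p$-subgroup and the number of top-dimensional irreducible components with the number of their conjugacy classes, and both read these two quantities off from \autoref{table:Chevalley} (including the same caveat that in type $G_2$ only the lower bound of three conjugacy classes is available). One minor remark: your value $r\cdot m(\Phi)$ for the Krull dimension is the correct one, but it does not literally match the entries of the table's dimension column, which record $p^{r\,m(\Phi)-1}$ (the order of the subgroup divided by $p$, following the paper's own phrasing ``the maximal rank is the maximal order divided by $p$''); this discrepancy is a slip in the paper rather than in your argument.
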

\begin{proof} 
By Quillen~\cite{quillen} the dimension of $\Spec H^*(G(\bF_q), k)$ equals the maximal rank of an elementary abelian $p$-subgroup of $G(\bF_q)$. The maximal rank is the maximal order divided by $p$ and, hence, can be read from \autoref{table:Chevalley}.  The Quillen stratification theorem implies that the number of irreducible components equals the number of conjugacy classes of maximal elementary abelian $p$-subgroups which once again can be read from \autoref{table:Chevalley}. 
\end{proof}


\appendix
\addtocontents{toc}{\protect\setcounter{tocdepth}{1}}
\section{Maximal sets of commuting roots} \label{Appendix}

In the Appendix we give some details for the description of the maximal subsets of commuting roots found in \autoref{table:max} and the stabilizers of certain ideals in \autoref{table:stab}.  Note that the maximal subsets of commuting roots are computed in Malcev~\cite{malcev} except that he skips the proof for $E_8$ and the paper is in Russian.

We begin with the computation of maximal subsets of commuting roots.  For type $E$, $D_n$ when $n < 7$, and $B_n$ when $n < 5$ we use a computer program which we have made available online~\cite{starkCommRoots}.  For the remaining types we provide the following arguments.

\subsection{Type $A_n$}

One can check, as in Grantcharov and Serganova~\cite{grantSerg}, that sending $J \subseteq \set{1, 2, \ldots, n + 1}$ to the set of roots $\set{\epsilon_i - \epsilon_j \ | \ i \in J, j \notin J}$ yields a bijection between proper nontrivial subsets of $\set{1, 2, \ldots, n + 1}$ and sets of inclusion maximal commutative subsets of $\Phi$.  As $J$ gets sent to a set of size $|J|(n + 1 - |J|)$ we see that this set is of maximal order when $n = 2m$ and $|J| = m, m + 1$ or when $n = 2m + 1$ and $|J| = m + 1$.  It is a set of positive roots if and only if $J < \set{1, \ldots, n + 1} \setminus J$, thus in type $A_{2m}$ we have $J = \set{1, \ldots, m}$ or $\set{1, \ldots, m + 1}$ yielding $\Phir m$ and $\Phir{m + 1}$, respectively, and in type $A_{2m + 1}$ we have $\set{1, \ldots, m + 1}$ yielding $\Phir{m + 1}$.

\subsection{Type $B_n$}

We assume $n \geq 5$.  The set $R = \set{\epsilon_i \ | \ 1 \leq i \leq n}$ is an inclusion maximal set of non-commuting roots so any maximal set of commuting roots in $\Phi^+$ consists of a maximal set of commuting roots in $\Phi^+ \setminus R$ together with at most one element from $R$.  Observe that $\Psi = \Phi \setminus \pm R$ is a root system of type $D_n$ with simple roots $\set{\alpha_1, \ldots, \alpha_{n - 1}, \alpha_{n - 1} + 2\alpha_n}$.  The maximal set $\Psi^\mathrm{rad}_n$ can commute with any $\epsilon_i$ and yields $S_i$.  The maximal set $\Psi^\mathrm{rad}_{n - 1}$ commutes with $\epsilon_i$ when $i < n$ and yields $S_i^\ast$.

\subsection{Type $C_n$}

Sending $J \subseteq \set{1, \ldots, n}$ to the set
\[\phi(J) = \set{\epsilon_i + \epsilon_{i'}, \epsilon_i - \epsilon_j, -\epsilon_j - \epsilon_{j'} \ | \ i, i' \in J \ \text{and} \ j, j' \notin J}\]
gives a bijection $\phi$ between the power set of $\set{1, \ldots, n}$ and inclusion maximal unipotent commuting subsets of $\Phi$.  Among those subsets $J$ satisfying $|J| = m$, the number of positive roots in $\phi(J)$ attains a maximum of $\frac12m(m + 1) + m(n - m)$ when $J < \set{1, \ldots, n} \setminus J$ and this maximum value for a given $m$ attains a maximum of $\frac12n(n + 1)$ when $m = n$.  Thus we take the positive roots of $\phi(\set{1, \ldots, n})$ and get $\Phir n$.

\subsection{Type $D_n$}

We assume $n \geq 7$.
\begin{lemma}
Let $\Phi$ be type $D_n$.  If $R \subseteq \Phir{\alpha_1, \alpha_2}$ is an inclusion maximal set of commuting roots which contains $\epsilon_1 - \epsilon_2$ then $R = \Phir1$ has order $2n - 2$.  If it does not contain $\epsilon_1 - \epsilon_2$ then it consists of the root $\epsilon_1 + \epsilon_2$ together with one choice of root from each of the sets $\set{\epsilon_1 + \epsilon_r, \epsilon_2 - \epsilon_r}_{2 < r \leq n}$ and $\set{\epsilon_1 - \epsilon_r, \epsilon_2 + \epsilon_r}_{2 < r \leq n}$, and hence has order $2n - 3$.
\end{lemma}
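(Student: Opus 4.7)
The plan is to reduce the lemma to an exhaustive enumeration of commuting/non-commuting pairs inside the concrete set $\Phir{\alpha_1,\alpha_2}$, and then to case split on whether $\epsilon_1-\epsilon_2$ lies in $R$.

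First I would write $\Phir{\alpha_1,\alpha_2}$ explicitly. A positive root of $D_n$ can be expressed using only $\alpha_3,\ldots,\alpha_n$ iff it is of the form $\epsilon_i\pm\epsilon_j$ with $3\le i<j\le n$, so
\[
\Phir{\alpha_1,\alpha_2} = \set{\epsilon_1\pm\epsilon_r : 2\le r\le n} \cup \set{\epsilon_2\pm\epsilon_r : 2<r\le n}.
\]
Using the fact that a nonzero integer vector is a root of $D_n$ iff it has exactly two nonzero coordinates each equal to $\pm 1$, a direct inspection of sums shows that the only non-commuting pairs inside $\Phir{\alpha_1,\alpha_2}$ are: (a) $\epsilon_1-\epsilon_2$ with each $\epsilon_2\pm\epsilon_r$ for $r>2$ (sum $\epsilon_1\pm\epsilon_r$); (b) $\epsilon_1+\epsilon_r$ with $\epsilon_2-\epsilon_r$ for $r>2$ (sum $\epsilon_1+\epsilon_2$); and (c) $\epsilon_1-\epsilon_r$ with $\epsilon_2+\epsilon_r$ for $r>2$ (sum $\epsilon_1+\epsilon_2$). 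In particular $\epsilon_1+\epsilon_2$ commutes with every other element of $\Phir{\alpha_1,\alpha_2}$, any two roots of the form $\epsilon_1\pm\epsilon_r,\epsilon_1\pm\epsilon_s$ commute, and any two of the form $\epsilon_2\pm\epsilon_r,\epsilon_2\pm\epsilon_s$ with $r,s>2$ commute.

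If $\epsilon_1-\epsilon_2\in R$, then (a) forbids any $\epsilon_2\pm\epsilon_r$ with $r>2$ from lying in $R$, while (b) and (c) impose no further constraint. The remaining set $\set{\epsilon_1\pm\epsilon_r : 2\le r\le n}$ is entirely commuting by the observation above, so maximality forces $R$ to equal this set, which is exactly $\Phir 1$ and has cardinality $2n-2$. If instead $\epsilon_1-\epsilon_2\notin R$, then since $\epsilon_1+\epsilon_2$ commutes with every element of $\Phir{\alpha_1,\alpha_2}$, maximality gives $\epsilon_1+\epsilon_2\in R$. For each fixed $r>2$, (b) permits $R$ to contain at most one root from $\set{\epsilon_1+\epsilon_r,\epsilon_2-\epsilon_r}$, and (c) permits at most one from $\set{\epsilon_1-\epsilon_r,\epsilon_2+\epsilon_r}$. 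Conversely, once one root is chosen from each of these $2(n-2)$ pairs, the resulting collection is automatically commuting: for distinct $r\ne s$ every cross-sum has four distinct nonzero coordinates and so is not a root, while within a single $r$ the only non-commuting pairs among the four candidate roots are exactly the excluded ones. Hence maximality forces $|R| = 1 + 2(n-2) = 2n-3$, matching the stated description.

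The main obstacle is nothing conceptual, merely verifying that the list (a)--(c) of non-commuting pairs is exhaustive and that the ``one from each pair'' choices across different indices $r$ are mutually commuting; both facts are immediate consequences of the description of $D_n$ roots as $\pm\epsilon_i\pm\epsilon_j$.
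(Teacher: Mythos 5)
Your proof is correct and follows essentially the same route as the paper: write out $\Phir{\alpha_1,\alpha_2}$ explicitly, split on whether $\epsilon_1-\epsilon_2\in R$, and observe that the only non-commuting pairs are $\epsilon_1-\epsilon_2$ against $\epsilon_2\pm\epsilon_r$ and the two families summing to $\epsilon_1+\epsilon_2$. The only cosmetic difference is that the paper gets $\epsilon_1+\epsilon_2\in R$ from its being the highest root, while you verify directly that it commutes with everything in the subset; the content is identical.
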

\begin{proof}
We have $\Phir{\alpha_1, \alpha_2} = \set{\epsilon_1 \pm \epsilon_i, \epsilon_2 \pm \epsilon_j \ | \ 2 \leq i \leq n \ \text{and} \ 3 \leq j \leq n}$.  If $\epsilon_1 - \epsilon_2$ is contained in our maximal set then the roots $\epsilon_2 \pm \epsilon_j$ are not, so the set contains at most the roots $\epsilon_1 \pm \epsilon_i$, i.e., the roots of $\Phir1$.  These indeed commute and there are $2n -2$ of them.  If $\epsilon_1 - \epsilon_2$ is not contained in our maximal set then note that $\epsilon_1 + \epsilon_2$ is the longest root and therefore is contained in any inclusion maximal set of commuting roots.  The remaining roots form the sets of non-commuting pairs given in the statement.  One sees that roots from distinct pairs commute and there are $2n - 4$ such pairs.
\end{proof}

Observe that $m(\Phi) \geq |\Phir n| = \frac12n(n - 1)$.  Also $\Phir1$ is inclusion maximal and of smaller order so no element of $\max(\Phi)$ contains $\Phir1$.  Now $\Psi = \Phi \setminus \pm\Phir{\alpha_1, \alpha_2}$ is a root system of type $D_{n - 2}$ with simple roots $\set{\alpha_3, \ldots, \alpha_n}$.  Every set of commuting roots in $\Phi^+$ is the union of sets of commuting roots from $\Psi^+$ and $\Phir{\alpha_1, \alpha_2}$.  By the lemma above the set of commuting roots from $\Phir{\alpha_1, \alpha_2}$ can have at most $2n - 3$ elements and by induction the set from $\Psi^+$ can have at most $\frac12(n - 2)(n - 3)$.  These sum to the order of $\Phir n$ so a maximal set of commuting roots must be the union of a maximal set from $\Psi^+$ and a set of order $2n - 3$ from $\Phir{\alpha_1, \alpha_2}$.

Now it suffices to take $R \in \max(\Psi)$ and check which roots in $\Phir{\alpha_1, \alpha_2}$ it commutes with.  If $R = \Psi^\mathrm{rad}_n$ then $\epsilon_i + \epsilon_n \in R$ for all $2 < i < n$ so $\epsilon_1 - \epsilon_j, \epsilon_2 - \epsilon_j \notin R$ for all $2 < j \leq n$.  This identifies a unique inclusion maximal set of commuting roots in $\Phir{\alpha_1, \alpha_2}$ and it's union with $\Psi^\mathrm{rad}_n$ is $\Phir n$.  If $R = \Psi^\mathrm{rad}_{n - 1}$ then $\epsilon_i - \epsilon_n \in R$ for all $2 < i < n$ so $\epsilon_1 - \epsilon_i, \epsilon_2 - \epsilon_i, \epsilon_1 + \epsilon_n, \epsilon_2 + \epsilon_n \notin R$.  Again this identifies the inclusion maximal set in $\Phir{\alpha_1, \alpha_2}$ and it's union with $\Psi^\mathrm{rad}_{n - 1}$ is $\Phir{n - 1}$.

\subsection{Type $F_4$}

Recall that the roots of $F_4$ are
\[\pm\epsilon_i, \pm\epsilon_i \pm \epsilon_j, \frac12(\pm\epsilon_1 \pm \epsilon_2 \pm \epsilon_3 \pm \epsilon_4).\]
We will denote positive roots of the last type by $\epsilon_{ijk} = \frac12(\epsilon_1 + i\epsilon_2 + j\epsilon_3 + k\epsilon_4)$ where $i, j, k \in \set{\pm1}$ and will write, for example, $\epsilon_{+-+}$ instead of $\epsilon_{1, -1, 1}$.
\begin{lemma}
If $\epsilon_{ijk} \neq \epsilon_{i'j'k'}$ then $\epsilon_{ijk}$ and $\epsilon_{i'j'k'}$ commute if and only if there is exactly one sign change between $(i, j, k)$ and $(i', j', k')$.  In particular, a commuting set of roots can have at most $2$ roots of the form $\epsilon_{ijk}$.
\end{lemma}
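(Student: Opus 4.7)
The plan is to translate commutation into an arithmetic condition on the sum, using that two positive roots $\alpha, \beta$ commute iff $\alpha + \beta \notin \Phi$. First I would compute
\[
\epsilon_{ijk} + \epsilon_{i'j'k'} = \epsilon_1 + \tfrac{i+i'}{2}\epsilon_2 + \tfrac{j+j'}{2}\epsilon_3 + \tfrac{k+k'}{2}\epsilon_4.
\]
Each coefficient $\tfrac{i+i'}{2}$, $\tfrac{j+j'}{2}$, $\tfrac{k+k'}{2}$ equals $\pm 1$ if the corresponding entries agree and $0$ if they disagree. Letting $s$ be the number of positions at which $(i,j,k)$ and $(i',j',k')$ agree (so $3-s$ is the number of sign changes), the sum has the form $\epsilon_1 + \sum_{r} \pm \epsilon_r$ with exactly $s$ nonzero coordinates among $\{2,3,4\}$, and squared length $1+s$.

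Next I would read off which such vectors lie in $\Phi$. Recall the short roots $\pm\epsilon_i$ and $\epsilon_{ijk}$-type roots have squared length $1$, while the long roots $\pm\epsilon_i \pm \epsilon_j$ ($i\neq j$) have squared length $2$; no root has squared length $3$ or $4$. Hence the sum is a root exactly when $1+s \in \{1,2\}$, i.e.\ $s \in \{0,1\}$: explicitly the sum is $\epsilon_1$ when $s=0$ and $\epsilon_1 \pm \epsilon_r$ when $s=1$, both genuine roots. The case $s=3$ would force $(i,j,k) = (i',j',k')$, which is excluded. Therefore the two distinct roots commute if and only if $s = 2$, that is, exactly one sign change occurs.

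For the final assertion, suppose three distinct triples $T_1, T_2, T_3$ pairwise differ in exactly one coordinate, and let $p_{ab} \in \{1,2,3\}$ be the position where $T_a$ and $T_b$ disagree. If $p_{12} = p_{13}$, then $T_2$ and $T_3$ agree with $T_1$ outside this position and both carry the opposite sign there, forcing $T_2 = T_3$, a contradiction. Hence $p_{12}, p_{13}, p_{23}$ are pairwise distinct and exhaust $\{1,2,3\}$. But then $T_2$ and $T_3$ differ in both coordinates $p_{12}$ and $p_{13}$, contradicting the defining property of $p_{23}$. Thus no three such roots can commute simultaneously. The only conceivable obstacle is an arithmetic miscount of the norm-versus-root correspondence in the half-integer coordinates, but this is handled uniformly once the coefficient $1$ in front of $\epsilon_1$ is noted to be constant across the $\epsilon_{ijk}$.
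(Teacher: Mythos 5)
Your proof is correct and follows essentially the same route as the paper: compute $\epsilon_{ijk}+\epsilon_{i'j'k'}$, observe that the surviving coordinates are exactly those where the signs agree (plus $\epsilon_1$), and check that the sum is a root precisely when it has one or two terms, i.e.\ unless there is exactly one sign change; the impossibility of three pairwise single-sign-change triples is the same parity/position argument the paper uses. The only cosmetic quibble is that your intermediate claim that $p_{12},p_{13},p_{23}$ ``exhaust $\{1,2,3\}$'' is not needed (and not yet justified at that point), but the contradiction you actually derive uses only $p_{12}\neq p_{13}$ and is sound.
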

\begin{proof}
Observe that the terms in $\epsilon_{ijk} + \epsilon_{i'j'k'}$ are exactly those $e_t$ for which the sign did not change (including $\epsilon_1$).  If there are $1$ or $2$ such terms then $\epsilon_{ijk} + \epsilon_{i'j'k'}$ is a root.  As the roots are distinct there cannot be $4$ such terms, therefore for $\epsilon_{ijk}$ and $\epsilon_{i'j'k'}$ to commute there must be $3$ such terms, hence exactly one sign change.

If $(i, j, k)$, $(i', j', k')$, and $(i'', j'', k'')$ are mutually distinct and there is exactly one sign change from $(i, j, k)$ to $(i', j', k')$ and $(i'', j'', k'')$ then there are $2$ sign changes from $(i', j', k')$ to $(i'', j'', k'')$.  This proves that $3$ roots of the form $(i, j, k)$ cannot pairwise commute.
\end{proof}
Now observe that the roots $\pm\epsilon_i$ and $\pm\epsilon_i \pm \epsilon_j$ give $B_4 \subseteq F_4$.  As the commuting property is preserved when intersecting with a subroot system the lemma above gives that a maximal set of commuting roots in $F_4$ can have at most $9$ roots: $7$ from a maximal set in $B_4$ plus $2$ additional roots of the form $\epsilon_{ijk}$.  This maximum is indeed attained so every maximal set of commuting positive roots in $F_4$ is identified by a tripel $(C, ijk, i'j'k')$, where $C \subseteq B_4$ is a maximal set of commuting roots and $\epsilon_{ijk}$ and $\epsilon{i'j'k'}$ are the two additional roots.  We now compute all the possibilities.

\subsubsection{$C = \Phir{1}$}

Every $\epsilon_{ijk}$ commutes with $C = \set{\epsilon_1, \epsilon_1 \pm \epsilon_i \ | \ i = 2, 3, 4}$.  Once $(i, j, k)$ are chosen there are three tripels $(i', j', k')$ which differ by a single sign change.  This gives $24$ ordered pairs of roots $(\epsilon_{ijk}, \epsilon_{i'j'k'})$ that can be added, thus $12$ possible sets of roots for this case.

\subsubsection{$C = S_t$}

We have $C = \set{\epsilon_t, \epsilon_i + \epsilon_j \ | \ 1 \leq i < j \leq 4}$ and for $\epsilon_{ijk}$ to commute with $\epsilon_t$ the sign on $\epsilon_t$ must be positive.  We cannot have more than one negative sign in $(i, j, k)$ otherwise $\epsilon_{ijk}$ would not commute with some root of the form $\epsilon_i + \epsilon_j$.  Thus the two roots of the form $\epsilon_{ijk}$ must be $\epsilon_{+++}$ and $\epsilon_{ijk}$ where there is exactly one negative sign in $(i, j, k)$ and this negative sign is not on the $\epsilon_t$ term.

Thus for $C = S_1$ we get three maximal sets corresponding to the three choices for a negative sign and for $C = S_2, S_3, S_4$ we get two maximal sets each.  This gives $9$ possible sets of roots for this case.

\subsubsection{$C = S^\ast_t$}

We have $C = \set{\epsilon_t, \epsilon_i + \epsilon_j, \epsilon_{i'} - \epsilon_4 \ | \ 1 \leq i < j < 4, 1 \leq i' < 4}$.  Because of the $\epsilon_{i'} - \epsilon_4$ terms the only $\epsilon_{ijk}$ with a single negative that commutes with $C$ is $\epsilon_{++-}$.  As $\epsilon_{--+}$ and $\epsilon_{---}$ don't commute with $\epsilon_2 + \epsilon_3$ we find that the two additional elements must be $\epsilon_{++-}$ and $\epsilon_{+++}$ or $\epsilon_{++-}$ and $\epsilon{ij-}$ where exactly one of $i, j$ is negative and the negative is not on the $\epsilon_t$ term.

The first choice is valid for any $t$.  For the second when $C = S_1$ there are two choices for the additional negative and when $C = S_2, S_3$ there is one choice for the additional negative.  This gives $7$ possible sets of roots for this case.

\subsection{Type $G_2$}

There are $3$ short and $3$ long positive roots.  No pair of short positive roots commute so there can be at most $1$ short root in a maximal commuting set.  The pair of long roots $(\alpha_2, 3\alpha_1 + \alpha_2)$ does not commute so a maximal set contains the highest root $3\alpha_1 + 2\alpha_2$ together with at most $1$ other long and $1$ short root.  With this one can check that the maximal sets are
\begin{align*}
& \set{\alpha_1, 3\alpha_1 + \alpha_2, 3\alpha_1 + 2\alpha_2}, \\
& \set{\alpha_1 + \alpha_2, 3\alpha_1 + \alpha_2, 3\alpha_1 + 2\alpha_2}, \\
& \set{\alpha_2, 2\alpha_1 + \alpha_2, 3\alpha_1 + 2\alpha_2}, \\
& \set{\alpha_2, \alpha_1 + \alpha_2, 3\alpha_1 + 2\alpha_2}, \\
& \set{2\alpha_1 + \alpha_2, 3\alpha_1 + \alpha_2, 3\alpha_1 + 2\alpha_2}. \\
\end{align*}

\bibliographystyle{alphanum}
\bibliography{refs}

\end{document}